\newtheorem{theorem}{Theorem}[section]
\newtheorem{definition}[theorem]{Definition}
\newtheorem{lemma}[theorem]{Lemma}
\newtheorem{corollary}[theorem]{Corollary}
\newtheorem{observation}[theorem]{Observation}
\newtheorem*{notation*}{Notation}
\newtheorem*{lemma*}{Lemma}
\newtheorem*{proposition*}{Proposition}
\newtheorem*{note*}{Note}
\DeclareMathOperator*{\argmin}{arg\,min}
\DeclareMathOperator*{\sign}{sign}
\DeclareMathOperator*{\im}{Im}
\newcommand{\parens}[1]{\left( #1 \right)}
\newcommand{\sparens}[1]{\left[ #1 \right]}
\newcommand{\leftsparens}[1]{\left[ #1 \right.}
\newcommand{\rightsparens}[1]{\left. #1 \right]}
\newcommand{\innerprod}[1]{\left< #1 \right>}
\newcommand{\abs}[1]{\left| #1 \right|}
\newcommand{\norm}[1]{\left\| #1 \right\|}
\newcommand{\set}[1]{\left\{ #1 \right\}}
\newcommand{\suchthat}[0]{\middle|}
\DeclareMathOperator{\ex}{\mathbbm{E}}
\newcommand{\ev}[2]{\ex_{#1}{\sparens{#2}}}
\NewDocumentCommand{\evtl}{m m m O{\\ &}}{\ex_{#1}{\leftsparens{\vphantom{#3} #2} \phantom{\ex_{#1}}} #4 \qquad \qquad \rightsparens{\vphantom{#2} #3}}
\DeclareMathOperator{\weightletter}{w}
\newcommand{\skeleton}[2]{#1^{\parens{#2}}}
\NewDocumentCommand{\weight}{O{k} o}{\IfNoValueTF{#2}{\weightletter \parens{ #1 }}{\weightletter_{#1}\parens{#2}}}
\newcommand{\stdcomplex}{X}
\newcommand{\dimension}[1]{\dim{\parens{#1}}}
\newcommand{\face}[1]{\MakeLowercase{#1}}
\newcommand{\vertex}[1]{\MakeLowercase{#1}}
\newcommand{\stdvertex}{\vertex{v}}
\newcommand{\genvertex}{\vertex{u}}
\newcommand{\stdface}{\face{\sigma}}
\newcommand{\genface}{\face{\tau}}
\newcommand{\coboundaryoperator}{\delta}
\newcommand{\cochain}[1]{\MakeUppercase{#1}}
\newcommand{\cochainset}[2]{C^{#1}\parens{#2}}
\newcommand{\levelcochainset}[3]{C_{#1}^{#2}\parens{#3}}
\newcommand{\stdcochain}{\cochain{f}}
\newcommand{\gencochain}{\cochain{g}}
\newcommand{\coboundaryset}[2]{B^{#1}\parens{#2}}
\NewDocumentCommand{\upoperator}{O{k} o}{\IfNoValueTF{#2}{M^{+}_{#1}}{M^{+{#1}}_{#2}}}
\NewDocumentCommand{\nlupoperator}{O{k}}{\parens{M'}^{+}_{#1}}
\NewDocumentCommand{\downoperator}{O{k} o}{\IfNoValueTF{#2}{M^{-}_{#1}}{M^{-{#1}}_{#2}}}
\NewDocumentCommand{\walkoperator}{O{k} o}{\IfNoValueTF{#2}{\Delta_{#1}}{\Delta_{#1, #2}}}
\newcommand{\viewerletter}{\Lambda}
\NewDocumentCommand{\viewer}{o o}{\IfNoValueTF{#1}{\viewerletter}{\IfNoValueTF{#2}{\viewerletter_{#1}}{\viewerletter_{#1}#2}}}
\NewDocumentCommand{\locviewer}{o o}{\IfNoValueTF{#1}{\viewerletter^{\ell}}{\IfNoValueTF{#2}{\viewerletter^{\ell}_{#1}}{\viewerletter^{\ell}_{#1}#2}}}
\NewDocumentCommand{\resviewer}{o o}{\IfNoValueTF{#1}{\viewerletter^r}{\IfNoValueTF{#2}{\viewerletter^r_{#1}}{\viewerletter^r_{#1}#2}}}
\NewDocumentCommand{\dimdiff}{O{\viewer}}{\Delta\parens{#1}}
\newcommand{\sldoperator}{d}
\DeclareMathOperator{\image}{Im}
\newcommand{\R}{\mathbbm{R}}
\newcommand{\ind}[1]{\mathbbm{1}_{#1}}
\title{Fine Grained Analysis of High Dimensional Random Walks}
\author{
Roy Gotlib
\footnote{Department of Computer Science, Bar-Ilan University, roy.gotlib@gmail.com, research supported by ERC and ISF. } 
\and 
Tali Kaufman
\footnote{Department of Computer Science, Bar-Ilan University, kaufmant@mit.edu, research supported by ERC and ISF.}
}
\begin{document}
    \maketitle
    \begin{abstract}
        One of the most important properties of high dimensional expanders is that high dimensional random walks converge rapidly.
        This property has proven to be extremely useful in a variety of fields in the theory of computer science from agreement testing to sampling, coding theory and more.
        In this paper we present a state of the art result in a line of works analyzing the convergence of high dimensional random walks~\cite{DBLP:conf/innovations/KaufmanM17,DBLP:conf/focs/DinurK17, DBLP:conf/approx/KaufmanO18,DBLP:journals/corr/abs-2001-02827}, by presenting a \emph{structured} version of the result of~\cite{DBLP:journals/corr/abs-2001-02827}.
        While previous works examined the expansion in the viewpoint of the worst possible eigenvalue, in this work we relate the expansion of a function to the entire spectrum of the random walk operator using the structure of the function; We call such a theorem a Fine Grained High Order Random Walk Theorem.
        In sufficiently structured cases the fine grained result that we present here can be much better than the worst case while in the worst case our result is equivalent to~\cite{DBLP:journals/corr/abs-2001-02827}.

        In order to prove the Fine Grained High Order Random Walk Theorem we introduce a way to bootstrap the expansion of random walks on the vertices of a complex into a fine grained understanding of higher order random walks, provided that the expansion is good enough.
        
        In addition, our \emph{single} bootstrapping theorem can simultaneously yield our Fine Grained High Order Random Walk Theorem as well as the well known Trickling down Theorem. Prior to this work, High order Random walks theorems and Tricking down Theorem have been obtained from different proof methods.
    \end{abstract}
    
\section{Introduction}\label{sec:introduction}
In recent years much attention has been given to the field of high dimensional expanders which are high dimensioanl analogues of expander graphs.
One extremely useful property of high dimensional expanders is that higher dimensional random walks (which are higher dimensional analogues of random walks on graphs) converge rapidly to their stationary distribution (For example, this property was used in~\cite{DBLP:conf/focs/DinurK17, DBLP:conf/innovations/KaufmanM20, https://doi.org/10.48550/arxiv.1811.01816, DBLP:conf/soda/DinurHKNT19} and more).
Consequently there has been some work studying the convergence of higher dimensional random walks~\cite{DBLP:conf/focs/DinurK17,DBLP:conf/innovations/KaufmanM17,DBLP:conf/approx/KaufmanO18,DBLP:journals/corr/abs-2001-02827}.
In this paper we improve upon these convergence results by relating the structure of the function to its expansion.
Specifically we present the following improvements:

\paragraph{Fine grained analysis of random walk} Prior to this paper, the state of the art analysis of high dimensional random walks was done by Alev and Lau in~\cite{DBLP:journals/corr/abs-2001-02827} following~\cite{DBLP:conf/focs/DinurK17,DBLP:conf/innovations/KaufmanM17,DBLP:conf/approx/KaufmanO18}.
Their work analyzed the eigenvalues of an important random walk called the \emph{down-up random walk}.
Their result, however, was only useful for the worst case analysis as it did not relate the structure of the function to its expansion and thus was forced to consider the worst possible function.
In this paper we present an improvement upon Alev and Lau's result by finding a connection between the structure of a function and how well it expands and can therefore yield better results on cochains that posses a ``nice'' structure.

In two-sided spectral expanders a fine grained analysis of high dimensional random walks was already proven, based on Fourier analysis in high dimensional expanders ~\cite{DBLP:conf/approx/DiksteinDFH18, DBLP:conf/approx/KaufmanO18}. In the two sided case even stronger results following from hypercontractivity are known~\cite{eigenstripping-pseudorandomness-and-unique-games, https://doi.org/10.48550/arxiv.2111.09444, https://doi.org/10.48550/arxiv.2111.09375} and more.
Our result is, importantly, about \emph{one-sided} spectral expanders as there are cases when the use of one-sided high dimensional expansion is crucial - for example in Anari et al's breakthrough proof of the fast convergence of the basis exchange walk~\cite{https://doi.org/10.48550/arxiv.1811.01816} and thus showed an algorithm that samples a basis of a matroid. 
This result started a wave of sampling results that use high dimensional expanders~\cite{chen2021optimal, anari2021spectral, anari2022entropic} to name a few examples.
This result relies heavily on fast convergence of high dimensional random walks on one-sided expanders (As they show that that the basis exchange corresponds to a down-up walk on the top dimension of a one-sided high dimensional expander).
In this work we present, to our knowledge, the first result to show a fine grained analysis of the random walk operator in one-sided local spectral expanders.

\paragraph{Replacing eigendecomposition} Previous fine grained analysis of high dimensional random walks relied on finding approximate eigendecomposition of the high dimensional random walk.
We present a new approach to finding a fine grained understanding of the high dimensioanl random walks: bootstrapping an understanding of the expansion of random walks on the vertices of the complex.
We show that, if the expansion of these random walks beat the expansion of high dimensional random walks on the vertices of local structures\footnote{Specifically, high dimensional random walks on the vertices of the links of vertices.}, we can bootstrap it into a \emph{fine grained} understanding of higher dimensional random walks.

\paragraph{Unification of the High order random walk theorem and the Tricking down theorem} In order to perform our fine grained analysis of the higher dimensional random walk operators we develop a new bootstrapping framework.
This new framework is fairly generic and seems to be of independent interest as it can be used to prove another central theorem in the theory of high dimensional expansion, namely the trickling down theorem~\cite{https://doi.org/10.48550/arxiv.1709.04431}. We comment that prior to this work these two important theorems were obtained by different proof techniques.

Before we can state our results more formally, we have to define the high dimensional analogs of expander graphs as graphs do not posses high dimensions.
This high dimensional object is called a ``simplicial complex'' and is defined as:
\begin{definition}[Simplicial complex]
    A set $\stdcomplex$ is a \emph{simplicial complex} if it is closed downwards meaning that if $\stdface \in \stdcomplex$ and $\genface \subseteq \stdface$ then $\genface \in \stdcomplex$.
    We call members of $\stdcomplex$ the \emph{faces} of $\stdcomplex$.
\end{definition}
Simplicial complexes can be thought of as hyper-graphs with closure property (i.e.\ every subset of a hyper-edge is a hyper-edge).
We are interested in higher dimensions and therefore it would be useful to define the dimension of these higher dimensional objects:
\begin{definition}[Dimension]
    Let $\stdcomplex$ be a simplicial complex and let $\stdface \in \stdcomplex$ be a face of $\stdcomplex$.
    Define the dimension of $\stdface$ to be:
    \[
        \dimension{\stdface} = \abs{\stdface}-1
    \]
    Denote the set of all faces of dimension $i$ in $\stdcomplex$ as $\stdcomplex(i)$.
    Also define the dimension of the complex $\stdcomplex$ as:
    \[
        \dimension{\stdcomplex} = \max_{\stdface \in \stdcomplex}{\set{\dimension{\stdface}}}
    \]
    Note that there is a single $(-1)$-dimensional face - the empty face.
\end{definition}
Of particular interest are simplicial complexes whose maximal faces are of the same dimension, defined below:
\begin{definition}[Pure simplicial complex]
    A simplicial complex $\stdcomplex$ is a \emph{pure simplicial complex} if every face $\stdface \in \stdcomplex$ is contained in some $\parens{\dimension{\stdcomplex}}$-dimensional face.
\end{definition}
Throughout this paper we will assume that every simplicial complex is pure.
In most cases we will be interested in weighted pure simplicial complexes.
In weighted pure simplicial complexes the top dimensional faces are weighted and the weight of the rest of the faces follows from there as described here:
\begin{definition}[Weight]
    Let $\stdcomplex$ be a pure $d$-dimensional simplicial complex.
    Define its weight function $\weightletter: \stdcomplex \rightarrow \sparens{0,1}$ to be a function such that:
    \begin{itemize}
        \item $\sum_{\stdface \in \stdcomplex(d)}{\weight[\stdcomplex]}=1$
        \item For every face $\genface$ of dimension $i<d$ it holds that:
        \[
            \weight[\genface] = \frac{1}{\binom{d+1}{i+1}}\sum_{\substack{\stdface \in \stdcomplex(d)\\\genface \subseteq \stdface}}{\weight[\stdface]}
        \]
    \end{itemize}
    It is important to note that we think of unweighted complex as complexes that satisfy $\forall \stdface \in \stdcomplex(d):\weight[\stdface] = \frac{1}{\abs{\stdcomplex(d)}}$.
    While the top dimensional faces of unweighted complexes all have the same weight, the same cannot be said for lower dimensional faces.
    
    It is also important to note that the sum of weights in every dimension is exactly $1$ and therefore for every $k$ the weight function can, and at times will, be thought of as a distribution on $\stdcomplex(k)$.
\end{definition}
One key property of high dimensional expanders is that they exhibit \emph{local to global} phenomena.
These phenomena are at the main interest of this paper.
It is therefore useful to consider local views of the simplicial complex which we define as follows:
\begin{definition}[Link]
    Let $\stdcomplex$ be a simplicial complex and $\stdface \in \stdcomplex$ be a face of $\stdcomplex$.
    Define the \emph{link} of $\stdface$ in $\stdcomplex$ as:
    \[
        \stdcomplex_{\stdface} = \set{\genface \setminus \stdface \suchthat \stdface \subseteq \genface}
    \]
    It is easy to see that the link of any face is a simplicial complex.

    The weight of faces in the links is induced by the weights of the faces in the original complex.
    Specifically, we denote by $\weightletter_{\stdface}$ the weight function in the link of $\stdface \in \stdcomplex(i)$ and it holds that:
    \[
        \forall \genface \in \stdcomplex_{\stdface}(j): \weight[\stdface][\genface] = \frac{\weight[\genface \cup \stdface]}{\binom{i+j+2}{i+1}\weight[\stdface]}
    \]
    Generally speaking, the local to global phenomena are ways to derive properties of the entire complex by only looking at local views (i.e.\ links).
\end{definition}
Another important substructure of a simplicial complex is its skeletons
\begin{definition}[Skeleton]
    Let $\parens{\stdcomplex, \weightletter}$ be a weighted pure $d$-dimensional simplicial complex and let $i \le d$.
    Define the $i$-skeleton of $\stdcomplex$ as the following weighted simplicial complex:
    \[
        \skeleton{\stdcomplex}{i} = \set{\stdface \in \stdcomplex \suchthat \dim{\stdface} \le i}
    \]
    With the original weight function.
\end{definition}
In many cases we will think of the $1$-skeleton of a simplicial complex as a graph.
In addition, it is important to note that even if the original complex is unweighted, the skeletons of said complex might still be weighted.

We are now ready to define a high dimensional expander\footnote{There is no singular definition of high dimensional expander but for the majority of this paper we only use the algebraic definition - local spectral expansion.}.
\begin{definition}[Local spectral expander]
    A pure $d$-dimensional simplicial complex $\stdcomplex$ is a $\lambda$-local spectral expander\footnote{Much like one dimensional expanders, in high dimensions there is also notion of one-sided vs. two-sided local spectral expansion. The definition we use throughout the paper is that of \emph{one}-sided local spectral expander. The difference being that in two-sided local spectral expander the underlying graph of every link is a two-sided expander rather than a one-sided expander.} if for every face $\stdface$ of dimension at most $d-2$ it holds that $\skeleton{\stdcomplex_{\stdface}}{1}$ is a $\lambda$-spectral expander\footnote{The complexes are weighted and therefore their expansion property is defined as the second largest eigenvalue of the non-lazy random walk. A random walk that walks from a face to one of its neighbours with probability equal to the proportion between the weight of the edge that connects them and the sum of the weights of the edges that include said vertex.}.
    Note that this includes $\stdface = \emptyset$, i.e.\ the entire complex.
\end{definition}

Much like graphs, simplicial complexes also support random walks.
In graphs, the random walks are of the form vertex-edge-vertex - the walk might move between two vertices if they are connected by an edge.
The high dimensional analogue of these random walks travel between two $k$-dimensional faces if they are part of a common $(k+1)$-dimensional face.
Our particular random walk of interest is the higher dimensional analogue of the non-lazy random walk, defined as follows:
\begin{definition}[Non-lazy up-down operator informal, for formal see~\ref{def:non-lazy-up-down-random-walk}]
    Define the $k$-dimensional non-lazy up-down random walk, $\nlupoperator[k]$ as the $k$ dimensional analogue of the non-lazy random walk on the vertices of a graph:
    A walk that moves between two $k$-dimensional faces if they are contained in a $(k+1)$-dimensional face and never stays in place.
\end{definition}
We are going to improve our understanding of how these higher dimensional random walks apply to structured states.
These states correspond to another natural structure on high dimensional expanders called cochains that is defined as follows:
\begin{definition}[Cochains]
    Let $\stdcomplex$ be a pure $d$-dimensional simplicial complex.
    For $-1 \le k \le d$ define a $k$-dimensional cochain $\stdcochain$ to be any function from $\stdcomplex(k)$ to $\R$.
    We also denote by $\cochainset{k}{\stdcomplex;\R}$ the set of all $k$-dimensional cochains.
\end{definition}
We are going to be interested in ways of viewing the cochains in the links of the complex.
For now we will only introduce one such way.
Namely localization:
\begin{definition}[Localization]
    Let $\stdcomplex$ be a pure $d$-dimensional simplicial complex, $k,i$ be dimensions such that $i < k$ and $\stdcochain \in \cochainset{k}{\stdcomplex;\R}$.
    Also let $\stdface \in \stdcomplex(i)$.
    Define the localization of $\stdcochain$ to $\stdface$ to be:
    \[
        \stdcochain_{\stdface}(\genface) = \stdcochain(\stdface \cup \genface)
    \]
\end{definition}

We note that there is a very natural inner product defined on the cochains of a simplicial complex, defined as follows:
\begin{definition}[Inner product]
    Let $\stdcomplex$ be a pure $d$-dimensional simplicial complex and let $\stdcochain, \gencochain \in \cochainset{k}{\stdcomplex;\R}$.
    Define the inner product of $\stdcochain$ and $\gencochain$ to be:
    \[
        \innerprod{\stdcochain, \gencochain} = \sum_{\stdface \in \stdcomplex(k)}{\weight[\stdface]\stdcochain(\stdface)\gencochain(\stdface)}
    \]
\end{definition}
\subsection{Main Results}\label{subsec:main-results}
In this paper we present a state of the art analysis the high dimensional analogue of the non-lazy random walk.
We are specifically interested in going beyond Alev and Lau's worst case result~\cite[Theorem 1.5]{DBLP:journals/corr/abs-2001-02827} and relate the structure of the cochain to its expansion.
Specifically, we define:
\begin{definition}[$i$-level cochain, informal. For formal see Definition~\ref{def:i-level-cochain}]
    A cochain $\stdcochain$ is an $i$-level cochain with respect to localization if for every $\stdface \in \stdcomplex(i-1)$ it holds that $\innerprod{\stdcochain_{\stdface}, \ind{}}=0$.\footnote{We note that every $j$-dimensional $i$-level cochain corresponds to a cochain in the $i$\textsuperscript{th} dimension that has been ``lifted'' up to the $j$\textsuperscript{th} dimension. A more formal version of this statement can be found in Lemma~\ref{lem:random-walk-advantage}.}
\end{definition}
Previously the best analysis of high dimensional random walk was due to Alev and Lau~\cite[Theorem 1.5]{DBLP:journals/corr/abs-2001-02827} who showed that:
\begin{theorem}[Alev and Lau~\cite{DBLP:journals/corr/abs-2001-02827}, restated]
    Let $\stdcomplex$ be a pure $d$-dimensioanl high dimensional expander.
    Define $\gamma_i = \max_{\stdface \in \stdcomplex(i)}{\set{\lambda_2(\stdcomplex_{\stdface})}}$.
    For any dimension $k$ and any $k$-dimensional $0$-level cochain $\stdcochain$ it holds that:
    \[
        \innerprod{\nlupoperator[k]\stdcochain, \stdcochain}
        \le \parens{1-\frac{1}{k+1}\prod_{j=-1}^{k-2}{(1-\gamma_i)}} \norm{\stdcochain}^2
    \]
\end{theorem}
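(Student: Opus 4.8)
The plan is to prove the bound by induction on the dimension of $\stdcomplex$, pushing the question about $\nlupoperator[k]$ on $\stdcomplex$ into the links $\stdcomplex_{\stdvertex}$ of vertices — where the relevant object is $\nlupoperator[k-1]$ — by the local-to-global (``garland'') machinery, with a direct analysis at the graph level, the only place the one-sided parameters $\gamma_j$ actually enter, as the base case.

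The first step is algebraic bookkeeping. Writing $U_j$ for the up operator and $D_{j+1}=U_j^{*}$ for its adjoint with respect to the weighted inner products, the non-lazy up-down walk $\nlupoperator[k]$ is, once its built-in laziness is removed, an affine combination of $U_k^{*}U_k$ and the identity; hence $\innerprod{\nlupoperator[k]\stdcochain,\stdcochain}$ is an affine combination of $\norm{U_k\stdcochain}^{2}$ and $\norm{\stdcochain}^{2}$, so the whole problem reduces to an \emph{upper} bound on $\norm{U_k\stdcochain}^{2}$. What makes one-sidedness harmless here is that $U_k^{*}U_k$ is positive semidefinite, so this quadratic form is blind to any negative part of the spectrum; no two-sided hypothesis is needed. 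Next one uses the garland identity $\norm{\gencochain}^{2}_{\stdcomplex}=\ex_{\stdvertex\sim\stdcomplex(0)}\!\left[\norm{\gencochain_{\stdvertex}}^{2}_{\stdcomplex_{\stdvertex}}\right]$, valid for every cochain of non-negative dimension thanks to the chosen weight normalisation, together with the way localisation to a vertex interacts with the up operator; combined, these rewrite $\norm{U_k\stdcochain}^{2}$ — equivalently $\innerprod{\nlupoperator[k]\stdcochain,\stdcochain}$ — as the average over vertices $\stdvertex$ of the analogous level-$(k-1)$ quantity inside $\stdcomplex_{\stdvertex}$.

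For the inductive step, in each link I would split the localisation $\stdcochain_{\stdvertex}=\widehat{\stdcochain}(\stdvertex)\cdot\ind{}+\stdcochain_{\stdvertex}^{\perp}$, where $\widehat{\stdcochain}(\stdvertex):=\ex_{\stdcomplex_{\stdvertex}}[\stdcochain_{\stdvertex}]$ and $\stdcochain_{\stdvertex}^{\perp}$ is a $0$-level cochain on $\stdcomplex_{\stdvertex}$. Because the walk fixes $\ind{}$ and maps $0$-level cochains to $0$-level cochains, the cross terms drop out; the remainder $\stdcochain_{\stdvertex}^{\perp}$ is controlled by the inductive hypothesis applied to $\stdcomplex_{\stdvertex}$, a pure $(d-1)$-dimensional high dimensional expander whose link parameters obey $\gamma^{\stdcomplex_{\stdvertex}}_{j}\le\gamma_{j+1}$ — this is how the factors $1-\gamma_j$ with $j\ge 0$ appear — while the averages $\widehat{\stdcochain}(\stdvertex)$ reassemble, as $\stdvertex$ ranges over $\stdcomplex(0)$, into the $0$-level $0$-cochain $\widehat{\stdcochain}=D_1D_2\cdots D_k\stdcochain$. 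Averaging over $\stdvertex$ leaves an inequality bounding $\innerprod{\nlupoperator[k]\stdcochain,\stdcochain}$ by a combination of $\norm{\stdcochain}^{2}$ and $\norm{\widehat{\stdcochain}}^{2}$ with coefficients built from the $1-\gamma_j$; the last ingredient is a bound on $\norm{\widehat{\stdcochain}}^{2}$, for which one again descends to a $1$-skeleton (of $\stdcomplex$ or of a link) and invokes the one-sided estimate $\lambda_2\le\gamma_{-1}$ through the standard transfer of spectra between the two orderings of the up and down operators, together with the positive semidefiniteness and built-in laziness of $D_1U_0$. Assembling everything yields the coefficient $1-\tfrac{1}{k+1}\prod_{j=-1}^{k-2}(1-\gamma_j)$, the last factor $1-\gamma_{-1}$ coming from the graph level and the prefactor $\tfrac{1}{k+1}$ from the combinatorics of how iterated averages dilute across $k+1$ dimensions.

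The step I expect to be the real obstacle is exactly the control of $\norm{\widehat{\stdcochain}}^{2}$: extracting a bound sharp enough to close the recursion at \emph{precisely} $1-\tfrac{1}{k+1}\prod_{j=-1}^{k-2}(1-\gamma_j)$ rather than at some merely comparable value. The model case that pins down the constant is the complete complex, where $\norm{U_{k-1}\cdots U_0\gencochain}^{2}$ is essentially $\tfrac{1}{k+1}\norm{\gencochain}^{2}$ and the worst $\stdcochain$'s are the mean-zero vertex functions lifted to dimension $k$ by the up operators; the recursion must be arranged so as to be tight there. A careless telescoping of the per-dimension estimates leaks lower-order terms, so one has either to solve the resulting linear recurrence for the norms $\norm{D_{j+1}\cdots D_k\stdcochain}^{2}$ directly, or to feed in the comparison between link expansions at consecutive levels (the trickling-down phenomenon, which the framework of this paper in any case reproves). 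In the regime where the $\gamma_j$ are too large for the stated bound to carry any information, it is anyway implied by the trivial estimate $\innerprod{\nlupoperator[k]\stdcochain,\stdcochain}\le\norm{\stdcochain}^{2}$.
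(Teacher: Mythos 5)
Your proposal is correct and is essentially the paper's own argument: it proves this statement as the $i=0$ case of Theorem~\ref{thm:walk-operator-decomposition}, by localizing the non-lazy walk to vertex links, splitting each localization into its constant part plus a $0$-level remainder, handling the remainder by induction in the links (where $\gamma_j^{\stdcomplex_{\stdvertex}} \le \gamma_{j+1}$), and controlling the reassembled constant part $\sldoperator^*_0\cdots\sldoperator^*_{k-1}\stdcochain$ through the laziness identity $\upoperator[k][0] = \frac{k}{k+1}\nlupoperator[0] + \frac{1}{k+1}I$ and the $1$-skeleton gap, which is exactly the paper's ``advantage'' (Lemmas~\ref{lem:nl-random-walk-and-i-up-operator} and~\ref{lem:random-walk-advantage}) fed into the bootstrapping recursion of Theorem~\ref{thm:main}. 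The only caveat is indexing: closing your recursion (with base case $k=0$ giving $\gamma_{-1}$) yields the coefficient $1-\frac{1}{k+1}\prod_{j=-1}^{k-1}(1-\gamma_j)$, matching Theorem~\ref{thm:walk-operator-decomposition}, whereas the upper limit $k-2$ (and the stray $\gamma_i$) in the restatement above is an off-by-one slip (at $k=0$ it would give $0$), so your sketch establishes the correct bound rather than the literally printed one.
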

We show that this result can be vastly improved for structured cochains.
Specifically, we show the following decomposition of the high dimensional random walks:
\begin{theorem}[Fine grained analysis of high dimensional random walks, Informal. For formal see Theorem~\ref{thm:walk-operator-decomposition}]
    Let $\stdcochain \in \cochainset{k}{\stdcomplex;\R}$ and let $\stdcochain_0,\cdots,\stdcochain_k$ be an orthogonal decomposition of $\stdcochain$ such that $\stdcochain_i$ is an $i$-level cochain and $\stdcochain = \sum_{i=0}^k{\stdcochain_i}$.
    In addition, let $\gamma_i = \max_{\stdface \in \stdcomplex(i)}{\set{\lambda_2(\stdcomplex_{\stdface})}}$ (Where $\lambda_2(\stdcomplex_{\stdface})$ is the second largest eigenvalue of the underlying graph of $\stdcomplex_{\stdface}$) then:
    \[
        \innerprod{\nlupoperator[k]\stdcochain, \stdcochain}
        \le \sum_{i = 0}^{k}{\parens{1-\frac{1}{k-i+1}\prod_{j=i-1}^{k-1}{(1-\gamma_{j})}}\norm{\stdcochain_i}^2}
    \]
\end{theorem}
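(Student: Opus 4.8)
The plan is to reduce everything to a single exact identity for the \emph{non-lazy} walk, together with an induction on the dimension; the worst-case bound of \cite{DBLP:journals/corr/abs-2001-02827} will reappear as the $0$-level case of that induction.

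\emph{The bootstrapping identity.} The crucial observation is that the non-lazy up--down walk at dimension $k$ is \emph{exactly} the average, over a random (weighted) vertex $v$, of the non-lazy up--down walk at dimension $k-1$ inside the link $\stdcomplex_{v}$:
\[
\innerprod{\nlupoperator[k]\stdcochain,\stdcochain}_{\stdcomplex}\;=\;\ev{v\sim\stdcomplex(0)}{\innerprod{\nlupoperator[k-1]\stdcochain_{v},\stdcochain_{v}}_{\stdcomplex_{v}}},\qquad\norm{\stdcochain}_{\stdcomplex}^{2}\;=\;\ev{v\sim\stdcomplex(0)}{\norm{\stdcochain_{v}}_{\stdcomplex_{v}}^{2}},
\]
for every $\stdcochain\in\cochainset{k}{\stdcomplex;\R}$. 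I would prove both by expanding each side as a sum over the $(k{+}1)$-faces $\genface$ of $\stdcomplex$: writing $a_{u}=\stdcochain(\genface\setminus u)$ for $u\in\genface$, $S=\sum_{u\in\genface}a_{u}$, $Q=\sum_{u\in\genface}a_{u}^{2}$, the walk term at $\genface$ equals $\tfrac{\weight[\genface]}{(k+1)(k+2)}(S^{2}-Q)$, while the right-hand side contributes $\tfrac{\weight[\genface]}{k(k+1)(k+2)}\sum_{v\in\genface}\bigl((S-a_{v})^{2}-(Q-a_{v}^{2})\bigr)$, and the elementary identity $\sum_{v\in\genface}\bigl((S-a_{v})^{2}-(Q-a_{v}^{2})\bigr)=k(S^{2}-Q)$ finishes it (the second equality is the same weight bookkeeping). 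Iterating $m$ times gives $\innerprod{\nlupoperator[k]\stdcochain,\stdcochain}_{\stdcomplex}=\ev{\stdface\sim\stdcomplex(m-1)}{\innerprod{\nlupoperator[k-m]\stdcochain_{\stdface},\stdcochain_{\stdface}}_{\stdcomplex_{\stdface}}}$, which is the ``bootstrapping'' device converting a lower-dimensional understanding (in links) into one at dimension $k$.

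\emph{Reduction to mean-zero cochains.} Using the standard structure theory of high-order walks --- that $\nlupoperator[k]$ is self-adjoint and respects the orthogonal level decomposition, and that localizing an $i$-level cochain yields an $(i{-}1)$-level cochain in each link --- one has $\innerprod{\nlupoperator[k]\stdcochain,\stdcochain}=\sum_{i}\innerprod{\nlupoperator[k]\stdcochain_{i},\stdcochain_{i}}$, and iterating the identity $i$ times turns an $i$-level $\stdcochain_{i}$ into mean-zero cochains $(\stdcochain_{i})_{\stdface}$, $\stdface\in\stdcomplex(i-1)$, in the links $\stdcomplex_{\stdface}$, where a link of a $j$-face inside $\stdcomplex_{\stdface}$ is a link of an $(i{+}j)$-face of $\stdcomplex$, so $\gamma_{j}(\stdcomplex_{\stdface})\le\gamma_{i+j}(\stdcomplex)$. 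Hence, together with $\ev{\stdface}{\norm{(\stdcochain_{i})_{\stdface}}^{2}}=\norm{\stdcochain_{i}}^{2}$, everything reduces to proving: for a mean-zero $\gencochain\in\cochainset{m}{Y;\R}$ in any $\lambda$-local spectral expander $Y$,
\[
\innerprod{\nlupoperator[m]\gencochain,\gencochain}\;\le\;B_{m}\norm{\gencochain}^{2},\qquad B_{m}:=1-\tfrac{1}{m+1}\prod_{j=-1}^{m-1}\bigl(1-\gamma_{j}(Y)\bigr),
\]
after which the $i$-level term picks up exactly the factor $1-\tfrac{1}{k-i+1}\prod_{j=i-1}^{k-1}(1-\gamma_{j})$ claimed. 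I prove this by induction on $m$; the base case $m=0$ is the $\gamma_{-1}$-expansion of the $1$-skeleton. For the step, apply the fully iterated identity and, in each link $Y_{\stdface}$ with $\stdface\in Y(m-1)$, the one-sided expander-mixing bound $\innerprod{\nlupoperator[0]g,g}\le\lambda\norm{g}^{2}+(1-\lambda)\innerprod{g,\ind{}}^{2}$; since $\innerprod{\gencochain_{\stdface},\ind{}}_{Y_{\stdface}}=(D_{m}\gencochain)(\stdface)$ for the averaging down-operator $D_{m}$, this yields $\innerprod{\nlupoperator[m]\gencochain,\gencochain}\le\gamma_{m-1}(Y)\,\norm{\gencochain}^{2}+(1-\gamma_{m-1}(Y))\,\norm{D_{m}\gencochain}^{2}$.

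\emph{Bounding the down-projection and closing the induction.} It remains to show $\norm{D_{m}\gencochain}^{2}\le\mu^{*}\norm{\gencochain}^{2}$, where $\mu^{*}$ is the largest eigenvalue on mean-zero cochains of the (lazy) up--down operator $\upoperator[m-1]=D_{m}U_{m-1}$ (here $U_{m-1}$ is the averaging up-operator and $D_{m}=U_{m-1}^{*}$). Split $\gencochain=U_{m-1}h+\gencochain'$ orthogonally with $\gencochain'\in\ker D_{m}$; then $D_{m}\gencochain=\upoperator[m-1]h$, $\norm{U_{m-1}h}^{2}=\innerprod{h,\upoperator[m-1]h}\le\norm{\gencochain}^{2}$, and $h$ is mean-zero. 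Since $\upoperator[m-1]=U_{m-1}^{*}U_{m-1}$ is self-adjoint positive semidefinite and fixes $\ind{}$, diagonalizing $h$ in its eigenbasis (eigenvalues $\mu_{\ell}\in[0,\mu^{*}]$) gives $\norm{D_{m}\gencochain}^{2}=\sum_{\ell}\mu_{\ell}^{2}c_{\ell}^{2}\le\mu^{*}\sum_{\ell}\mu_{\ell}c_{\ell}^{2}=\mu^{*}\norm{U_{m-1}h}^{2}\le\mu^{*}\norm{\gencochain}^{2}$. Finally $\upoperator[m-1]=\tfrac{1}{m+1}I+\tfrac{m}{m+1}\nlupoperator[m-1]$, so $\mu^{*}=\tfrac{1}{m+1}+\tfrac{m}{m+1}\beta_{m-1}$ with $\beta_{m-1}\le1-\tfrac{1}{m}\prod_{j=-1}^{m-2}(1-\gamma_{j})$ by the induction hypothesis (as $\nlupoperator[m-1]$ is self-adjoint and preserves mean-zero cochains), and a one-line computation gives $\gamma_{m-1}+(1-\gamma_{m-1})\mu^{*}\le B_{m}$, closing the induction.

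\emph{Where the difficulty lies.} The genuinely delicate point is $\norm{D_{m}\gencochain}^{2}\le\mu^{*}\norm{\gencochain}^{2}$: $D_{m}$ is only a contraction, so the naive $\norm{D_{m}\gencochain}^{2}\le\norm{\gencochain}^{2}$ is useless, and extracting the sharp factor $\mu^{*}$ requires both the positive semidefiniteness of $\upoperator[m-1]$ and the orthogonal splitting $\cochainset{m}{Y;\R}=\image(U_{m-1})\oplus\ker(D_{m})$, so that $\norm{U_{m-1}h}^{2}$ becomes $\innerprod{h,\upoperator[m-1]h}$ and the ``$\sum\mu_{\ell}^{2}\le\mu^{*}\sum\mu_{\ell}$'' step bounds against $\norm{\gencochain}^{2}$ rather than $\norm{h}^{2}$; it is also exactly what makes the recursion $\gamma_{m-1}+(1-\gamma_{m-1})\mu^{*}=B_{m}$ close without slack. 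The remaining reliance on established machinery is the block-diagonality of $\nlupoperator[k]$ on the level decomposition and the fact that localization lowers level by one, which is what legitimizes treating the levels one at a time.
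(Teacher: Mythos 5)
Your inner induction is essentially sound: the localization identity for $\innerprod{\nlupoperator[k]\stdcochain,\stdcochain}$, the expander-mixing step in the links, and in particular the estimate $\norm{D_m\gencochain}^2\le\mu^*\norm{\gencochain}^2$ via the orthogonal splitting $\cochainset{m}{Y;\R}=\image(U_{m-1})\oplus\ker(D_m)$ all work, and that last step plays exactly the role of the paper's ``advantage'' (Lemmas~\ref{lem:from-k-to-0} and~\ref{lem:random-walk-advantage}, which achieve the same norm transfer with a $\sqrt{\upoperator[k][0]}$ argument). Likewise, handling a \emph{single} $i$-level cochain by localizing to $(i-1)$-faces and invoking the mean-zero bound in the links is fine.

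The gap is in the reduction that precedes all of this: you assert, as ``standard structure theory,'' that $\nlupoperator[k]$ respects the level decomposition, i.e.\ that $\innerprod{\nlupoperator[k]\stdcochain,\stdcochain}=\sum_i\innerprod{\nlupoperator[k]\stdcochain_i,\stdcochain_i}$. This needs the cross terms $\innerprod{\nlupoperator[k]\stdcochain_i,\stdcochain_j}$ for $i\ne j$ to vanish, which does not follow from the orthogonality $\innerprod{\stdcochain_i,\stdcochain_j}=0$; it would require the level spaces to be invariant under the walk, which holds only in very special cases (e.g.\ the complete complex). In a one-sided local spectral expander the level cochains are not even approximately eigenfunctions of $\nlupoperator[k]$ --- applying the walk to $\stdcochain_i$ produces components correlated with the other levels, and one-sided expansion gives no control on these cross terms (in the two-sided setting they are merely $O(\gamma)$-small, which is exactly the approximate-eigendecomposition route that is unavailable here). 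This is precisely the barrier the paper's bootstrapping theorem (Theorems~\ref{thm:main-technichal} and~\ref{thm:main}) is designed to sidestep: the entire sum $\sum_i\stdcochain_i$ is localized at once, the recursion is run inside the links, and the only mixed terms that ever appear are of the form $c_{i,j}\innerprod{\stdcochain_i,\stdcochain_j}$, which vanish by Lemma~\ref{lem:orthogonality-of-proper-level-cochains}. Without such an argument, or a proof that the cross terms $\innerprod{\nlupoperator[k]\stdcochain_i,\stdcochain_j}$ vanish or can be absorbed without degrading the constants, your proof establishes the claimed bound only for a single level cochain, not for general $\stdcochain=\sum_{i=0}^k\stdcochain_i$.
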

\paragraph{Cases where we improve upon previous results} In the worst case (i.e. no assumption is made on the structure of $\stdcochain$) our result matches that of Alev and Lau.
In cases where the cochain is structured, however, our Theorem yields strictly better results than what was previously known.
We also give some examples of families of structured cochains on which our result is \emph{strictly better} than the result of Alev and Lau - Specifically, we show two families of cochains that are highly structured:
The first is a set of cochains associated with a different form of high dimensional expansion, the minimal cochains and the second is the indicator function of a balanced set of faces (for more information, see Subsection~\ref{subsec:examples-of-cochains-of-high-level}).

\paragraph{Comparison with other known decompositions} Similar decompositions of the high dimensional random walks were already known for two-sided high dimensional expanders~\cite{DBLP:conf/approx/DiksteinDFH18, DBLP:conf/approx/KaufmanO18}. These decomposition relied on finding approximate eigenspaces of the walk operator.
Unlike the case in two-sided high dimensional expanders, in one-sided high dimensional expanders the eigenspaces of the high dimensional random walks are not currently understood, even approximately.
We do, however, understand the expansion of key random walks on the $0$-dimensional cochains.
We follow this by showing that if this expansion is strong enough (i.e.\ these random walks converge fast enough) we can boost it to all levels and get a decomposition theorem without eigendecomposition.
In order to apply our bootstrapping method we have to show that the expansion of the aforementioned $0$-dimensional cochains ``beats'' the expansion of cochains of higher levels. 
Note that the decomposition achieved by our bootstrapping theorem differs from the decomposition known in two-sided expanders in one \emph{crucial} way: In our decomposition the level functions are \emph{not} approximate eigenfunctions.
Moreover, applying the non-lazy up-down random walk operator to any one of them yields a cochain that is \emph{not} orthogonal to many of the other $\stdcochain_j$s.
This allows us to sidestep a major technical barrier in previous works as we do not rely on the existence of an eigendecomposition of the walk operator (or even an approximate decomposition of that operator).
Note that many results that use local spectral expanders, such as their usage in proving the existence of cosystolic expanders, agreement testing, locally testable codes and more only require one application of the walk operator and thus we believe that our result will prove to be influential with regards to these fields.

Our main tool for proving the decomposition of high dimensional random walks is a bootstrapping theorem that reduces decomposition of higher dimensional random walk to an understanding of highly structured cochains (for example cochains that correlate with $0$-dimensional cochains).
The bootstrapping theorem is fairly general and thus we bring a special case of it here:
\begin{theorem}[Bootstrapping theorem, informal. For formal see Theorem~\ref{thm:main}]\label{thm:main-informal}
    Let $\stdcomplex$ be a simplicial complex and $k$ be a dimension.
    Every $\stdcochain \in \cochainset{k}{\stdcomplex;\R}$ that is orthogonal to the constant functions can be decomposed into $\stdcochain = \sum_{i=0}^k{\stdcochain_i}$ such that the cochains $\stdcochain_i$ are both:
    \begin{enumerate}
        \item \emph{$i$-level cochains with respect to localization}.
        \item \emph{Orthogonal:} For every $i \ne j$ it holds that $\stdcochain_i$ is orthogonal to $\stdcochain_j$.
    \end{enumerate}
    We can use a solution of some recursive formula on $0$-level cochains and values $\set{\lambda_i}_{i=0}^k$ in order to bootstrap a decomposition of the following form:
    \[
        \innerprod{\nlupoperator[k]\stdcochain, \stdcochain} = \sum_{i=0}^k{\lambda_i \norm{\stdcochain_i}^2}
    \]
\end{theorem}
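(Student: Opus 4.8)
The proof rests on three ingredients: an exact \emph{localization identity} for the non‑lazy up‑down operator, the behaviour of the level decomposition under localization, and an induction on the dimension $k$.

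\medskip
\noindent\textbf{Step 1 (localizing the walk operator).} I would first prove the identity
$\innerprod{\nlupoperator[k] f, f} = \sum_{v \in \stdcomplex(0)} w(v)\,\innerprod{(M'_{v})^{+}_{k-1} f_v,\, f_v}_{\stdcomplex_v}$,
where $(M'_{v})^{+}_{k-1}$ is the corresponding operator of the link $\stdcomplex_v$ and $f_v$ is the localization of $f$ to $v$. The proof is a direct weight computation: the global walk from a $k$‑face $\sigma$ averages $f$ over the pairs $(\tau,\sigma')$ with $\sigma\subset\tau\in\stdcomplex(k+1)$, $\sigma'\subset\tau$, $\sigma'\neq\sigma$; the local walk in $\stdcomplex_v$ (for $v\in\sigma$) restricts to those $\sigma'$ still containing $v$; and averaging over the $k+1$ vertices $v\in\sigma$ recovers the global average, because every such $\sigma'$ meets $\sigma$ in exactly $k$ of its $k+1$ vertices, the normalisations matching by the weight recursion (e.g.\ $\sum_{\tau\supseteq\sigma}w(\tau)=(k+2)w(\sigma)$). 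In the same vein I would record the elementary identity $\innerprod{f,g}=\sum_v w(v)\innerprod{f_v,g_v}_{\stdcomplex_v}$, in particular $\norm{f}^2=\sum_v w(v)\norm{f_v}^2$, which is what lets norms and orthogonality pass through localization.

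\medskip
\noindent\textbf{Step 2 (level decomposition versus localization).} Let $W_i\subseteq\cochainset{k}{\stdcomplex;\R}$ be the subspace of $i$‑level cochains. These are nested, $W_0\supseteq W_1\supseteq\cdots$, with $W_0$ the space orthogonal to the constants, and I would take $f_i$ to be the part of $f$ in $W_i$ orthogonal to $W_{i+1}$, giving the required orthogonal decomposition $f=\sum_{i=0}^k f_i$ with $f_i$ an $i$‑level cochain. The structural claim to establish is that this decomposition commutes with localization: localizing a pure $i$‑level cochain to a vertex $v$ yields a pure $(i-1)$‑level cochain of $\stdcomplex_v$ (and $(f_0)_v$ is a constant plus a pure $0$‑level cochain of $\stdcomplex_v$). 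I would prove this from the characterization ``$f_i$ is pure $i$‑level iff every localization $(f_i)_\sigma$ to $\sigma\in\stdcomplex(i-1)$ is an iterated up‑lift of a $0$‑cochain of $\stdcomplex_\sigma$ orthogonal to constants'', which visibly descends one dimension at a time under localization; this uses the sub‑fact that an up‑lift of such a $0$‑cochain is orthogonal to $W_1$ (the iterated down operator annihilates $W_1$). Combined with $\norm{f}^2=\sum_v w(v)\norm{f_v}^2$ this gives, writing $f_v=c_v\ind{}+\overline{f_v}$ with $\overline{f_v}$ orthogonal to constants, the bookkeeping identities $\sum_v w(v)c_v^2=\norm{M^{-}_1\cdots M^{-}_k f_0}^2$ and $\sum_v w(v)\norm{(\overline{f_v})_{i-1}}^2=\norm{f_i}^2$ for $i\ge 2$, with the level‑$0$ part of $\overline{f_v}$ absorbing both $(f_1)_v$ and the pure $0$‑level part of $(f_0)_v$; the cross terms vanish since $f_j\in W_1$ forces $\innerprod{(f_j)_v,\ind{}}=0$ for $j\ge 1$ while the global pieces are orthogonal.

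\medskip
\noindent\textbf{Step 3 (the recursion and its closed form).} Substituting the inductive hypothesis in dimension $k-1$, applied inside each link to $\overline{f_v}$ and using $(M'_{v})^{+}_{k-1}\ind{}=\ind{}$, the localization identity becomes $\innerprod{\nlupoperator[k] f,f}=\norm{M^{-}_1\cdots M^{-}_k f_0}^2+\sum_v w(v)\innerprod{(M'_{v})^{+}_{k-1}\overline{f_v},\overline{f_v}}_{\stdcomplex_v}$; expanding the second summand by level and using Step 2 produces $\innerprod{\nlupoperator[k] f,f}=\sum_i\lambda_i^{(k)}\norm{f_i}^2$, where for $i\ge 1$ the coefficient $\lambda_i^{(k)}$ equals $\lambda_{i-1}^{(k-1)}$ in the links, and $\lambda_0^{(k)}=\lambda_0^{(k-1)}+(1-\lambda_0^{(k-1)})\cdot\norm{M^{-}_1\cdots M^{-}_k f_0}^2/\norm{f_0}^2$, seeded at dimension $0$ by the link‑graph expansion on functions orthogonal to constants. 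This is the promised recursive formula; its inputs are exactly the Rayleigh quotients of the vertex walks, so the displayed equality holds, and replacing these inputs by the worst‑case bounds $\gamma_j$ gives the companion closed form. For the latter I would use that a pure $0$‑level cochain is an up‑lift $f_0=M^{+}_{k-1}\cdots M^{+}_0\phi_0$ with $\phi_0$ orthogonal to constants, compute $M^{-}_1\cdots M^{-}_k f_0=\tfrac{1}{k+1}\phi_0+\tfrac{k}{k+1}\nlupoperator[0]\phi_0$, note that lifting kills the eigencomponents of $\nlupoperator[0]$ with eigenvalue $\le-\tfrac1k$, and deduce $\norm{M^{-}_1\cdots M^{-}_k f_0}^2/\norm{f_0}^2\le\tfrac{1}{k+1}+\tfrac{k}{k+1}\gamma_{-1}$; the recursion then telescopes, each localization step shifting the index $\gamma_j$ up by one (a link of a link of a $j$‑face being a link of a $(j+1)$‑face), to $1-\tfrac{1}{k-i+1}\prod_{j=i-1}^{k-1}(1-\gamma_j)$.

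\medskip
\noindent\textbf{Expected main obstacle.} The delicate point is Step 2: establishing that the level decomposition commutes cleanly with localization, so that the link contributions really reorganize into a diagonal sum $\sum_i\lambda_i^{(k)}\norm{f_i}^2$ with no surviving cross‑level terms. This hinges on finding the right intrinsic description of a pure‑level cochain and on the lemma that an iterated up‑lift of a $0$‑cochain orthogonal to constants is orthogonal to $W_1$. A secondary difficulty is the descent‑to‑vertices term $\norm{M^{-}_1\cdots M^{-}_k f_0}^2$: one must identify $f_0$ as a genuine up‑lift and analyze the iterated down‑then‑up operator on such lifts, and some care is needed because one‑sided expansion does not control the \emph{smallest} eigenvalue of a link graph, so the ratio $\norm{M^{-}_1\cdots M^{-}_k f_0}^2/\norm{f_0}^2$ must be handled through the spectral decomposition of $\phi_0$ (using that the problematic eigencomponents are annihilated by lifting) rather than by a crude operator‑norm bound. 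Verifying that the recursion telescopes to exactly the stated product, with the correct index shifts, is then bookkeeping but must be carried out carefully.
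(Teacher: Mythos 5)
Your Steps 1 and 3 are essentially the paper's argument: the localization identity of Step 1 is exactly the statement that the localization link viewer ``respects the non-lazy up-down random walk'' (proved in Section~\ref{sec:decomposition-of-the-walk-operators}), and your Step 3 seed is the paper's advantage, namely $\upoperator[k][0]=\frac{k}{k+1}\nlupoperator[0]+\frac{1}{k+1}I$ (Lemma~\ref{lem:nl-random-walk-and-i-up-operator}) combined with the passage from a proper $0$-level $k$-cochain to a $0$-cochain of equal norm (Lemma~\ref{lem:from-k-to-0}; the paper uses $\sqrt{\upoperator[k][0]}$ where you use the spectral decomposition of $\phi_0$, which is equivalent), followed by the same telescoping recursion. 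The genuine gap is in Step 2, exactly at the point you flag as delicate. The claim that localizing a proper $i$-level cochain to a vertex yields a \emph{proper} $(i-1)$-level cochain of the link is unjustified and is false in general, as is the characterization you propose to derive it from. Localization preserves only the level condition ($\locviewer[\stdvertex][\stdcochain_i]\in\levelcochainset{\locviewer,i-1}{k-1}{\stdcomplex_{\stdvertex};\R}$); properness is the additional condition of lying in $\im\parens{\sldoperator_{k-2}\cdots\sldoperator_{i-1}}$ of the link, and this is destroyed. Concretely, for $k=2$, $i=1$: a proper $1$-level cochain has the form $\stdcochain=\sldoperator_1 g$ with $\sldoperator^*_0\sldoperator^*_1\stdcochain=0$, and $\locviewer[\stdvertex][\stdcochain]\parens{\set{a,b}}=\frac{1}{3}\parens{g(\stdvertex a)+g(\stdvertex b)+g(ab)}$; the first two terms form an up-lift of $g_{\stdvertex}$, but the $g(ab)$ term is an up-lift of the \emph{restriction} of $g$ to $\stdcomplex_{\stdvertex}$, which generically has a nonzero component in the $1$-level cochains of $\stdcomplex_{\stdvertex}$, so $\locviewer[\stdvertex][\stdcochain]$ is $0$-level but not proper. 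The paper states this obstruction explicitly (``these new localized cochains are \emph{not} orthogonal to each other''). As a consequence your bookkeeping identity $\sum_{\stdvertex}\weight[\stdvertex]\norm{(\overline{f_{\stdvertex}})_{i-1}}^2=\norm{\stdcochain_i}^2$ fails: the mass of $\locviewer[\stdvertex][\stdcochain_i]$ spreads over several proper levels of the link, each weighted by a different coefficient when you apply the inductive hypothesis, so the clean diagonal inductive statement $\innerprod{\nlupoperator[k]\stdcochain,\stdcochain}=\sum_i\lambda_i^{(k)}\norm{\stdcochain_i}^2$ cannot close on itself.

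The paper is structured precisely to avoid needing your commutation claim. Its inductive statement (Theorem~\ref{thm:main-technichal}) is formulated for arbitrary, not necessarily proper, level cochains and carries cross terms $c_{i,j}\innerprod{\stdcochain_i,\stdcochain_j}$ through the recursion; local cross terms are pushed back to global ones via $\ev{\stdvertex\in\stdcomplex(0)}{\innerprod{\locviewer[\stdvertex][\stdcochain_i],\locviewer[\stdvertex][\stdcochain_j]}}=\innerprod{\stdcochain_i,\stdcochain_j}$ (and Lemma~\ref{lem:dealing-with-mixed-terms}), and properness is invoked only once, at the outermost level (Theorem~\ref{thm:main}), where Lemma~\ref{lem:orthogonality-of-proper-level-cochains} makes the cross terms vanish. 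The only facts used about the localized pieces are that they are level cochains and all orthogonal to the constants, with the level-$0$ bucket in each link taken to be $\locviewer[\stdvertex][\stdcochain_1]+\parens{I-\downoperator[r][r]}\locviewer[\stdvertex][\stdcochain_0]$, as in your own splitting. To repair your argument you would have to abandon the claim that the proper-level decomposition commutes with localization and reformulate the induction in this cross-term-tolerant form; note also that the formal conclusion obtained this way is an inequality bounding the coefficients rather than the exact identity you assert.
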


Solving the recursive formula in the bootstrapping theorem requires us to gain some ``advantage'' for highly structured cochains in the complex.
We can therefore view this Theorem as a tool that allows us to bootstrap an advantage we have to a decomposition of the non-lazy random walks.

As we said, our bootstrapping theorem is fairly generic and can also yield the celebrated Oppenheim's trickling down theorem~\cite[Theorem~4.1]{https://doi.org/10.48550/arxiv.1709.04431}:
\begin{theorem}[Trickling Down, \cite{https://doi.org/10.48550/arxiv.1709.04431}]\label{thm:trickling-down}
    Let $\stdcomplex$ be a pure $d$-dimensional simplicial complex.
    If it holds that:
    \begin{itemize}
        \item For every vertex $\stdvertex$: $\skeleton{\stdcomplex_{\stdvertex}}{1}$ is a $\lambda$ spectral expander.
        \item $\stdcomplex$ is connected.
    \end{itemize}
    Then it holds that $\skeleton{\stdcomplex}{1}$ is a $\frac{\lambda}{1-\lambda}$ spectral expander.
\end{theorem}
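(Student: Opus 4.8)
\emph{Proof plan.} I would follow Oppenheim's local‑to‑global (Garland‑type) argument. Since a $\lambda$‑spectral expander has $\lambda<1$ and for $\tfrac12\le\lambda<1$ the bound $\tfrac{\lambda}{1-\lambda}\ge1$ is vacuous, we may assume $0\le\lambda<\tfrac12$; also the argument uses only faces of dimension at most $2$, so assume $\dimension{\stdcomplex}\ge2$ (for $\dimension{\stdcomplex}\le1$ the hypothesis on vertex links is degenerate). Let $A$ denote the non‑lazy random walk operator on the vertices of $\skeleton{\stdcomplex}{1}$, and for a vertex $\stdvertex$ let $A_{\stdvertex}$ denote the analogous operator on the vertices of $\skeleton{\stdcomplex_{\stdvertex}}{1}$; by the normalisation of the weight function these are self‑adjoint for $\innerprod{\cdot,\cdot}$ and for the inner product of $\stdcomplex_{\stdvertex}$ respectively, with top eigenvalue $1$ attained at $\ind{}$ and with stationary distributions $\weightletter$ and $\weightletter_{\stdvertex}$. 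Since "$\skeleton{\stdcomplex}{1}$ is a $\tfrac{\lambda}{1-\lambda}$‑spectral expander" means precisely $\lambda_2(A)\le\tfrac{\lambda}{1-\lambda}$, the goal is to show $\lambda I-(1-\lambda)A\succeq0$ on $\ind{}^{\perp}$.

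\textbf{Step 1 (localising the Dirichlet form).} For $\stdcochain\in\cochainset{0}{\stdcomplex;\R}$ and a vertex $\stdvertex$, let $\stdcochain_{\stdvertex}$ be the restriction of $\stdcochain$ to the vertices of $\stdcomplex_{\stdvertex}$ (a neighbour of $\stdvertex$ in $\stdcomplex$ is exactly a vertex of the link). I would first establish the identity
\[
    \innerprod{(I-A)\stdcochain,\stdcochain}=\ev{\stdvertex\sim\weightletter}{\innerprod{(I-A_{\stdvertex})\stdcochain_{\stdvertex},\stdcochain_{\stdvertex}}_{\stdcomplex_{\stdvertex}}}.
\]
Both sides equal one half of an expected squared increment of $\stdcochain$: the left side over a $\weightletter$‑random edge of $\stdcomplex$, and the right side over the pair of vertices left after deleting one vertex of a triangle. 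These two sampling procedures have the same law because the weight function is consistent across dimensions $0,1,2$: drawing a $\weightletter$‑random vertex and then a $\weightletter$‑weighted triangle through it is the same as drawing a $\weightletter$‑random triangle and then a uniform vertex of it, and deleting that vertex leaves a $\weightletter$‑random edge (every edge lies in a triangle since $\stdcomplex$ is pure of dimension $\ge 2$).

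\textbf{Step 2 (inserting link expansion).} For every vertex $\stdvertex$, since $\skeleton{\stdcomplex_{\stdvertex}}{1}$ is a $\lambda$‑spectral expander we have, for \emph{any} $0$‑cochain $g$ on $\stdcomplex_{\stdvertex}$,
\[
    \innerprod{(I-A_{\stdvertex})g,g}_{\stdcomplex_{\stdvertex}}\ge(1-\lambda)\bigl(\norm{g}_{\stdcomplex_{\stdvertex}}^{2}-\innerprod{g,\ind{}}_{\stdcomplex_{\stdvertex}}^{2}\bigr),
\]
obtained by writing $g$ as its mean plus a function orthogonal to $\ind{}$ and using $A_{\stdvertex}\preceq\lambda I$ on $\ind{}^{\perp}$. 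Apply this with $g=\stdcochain_{\stdvertex}$, average over $\stdvertex\sim\weightletter$, and use the bookkeeping identities $\ev{\stdvertex\sim\weightletter}{\norm{\stdcochain_{\stdvertex}}_{\stdcomplex_{\stdvertex}}^{2}}=\norm{\stdcochain}^{2}$ and $\innerprod{\stdcochain_{\stdvertex},\ind{}}_{\stdcomplex_{\stdvertex}}=(A\stdcochain)(\stdvertex)$ (the vertex‑weights of $\stdcomplex_{\stdvertex}$ are exactly the neighbour distribution of $\stdvertex$ in $\skeleton{\stdcomplex}{1}$), so that $\ev{\stdvertex\sim\weightletter}{\innerprod{\stdcochain_{\stdvertex},\ind{}}_{\stdcomplex_{\stdvertex}}^{2}}=\norm{A\stdcochain}^{2}$. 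Combined with Step 1 this gives, for every $\stdcochain\in\cochainset{0}{\stdcomplex;\R}$,
\[
    \innerprod{(I-A)\stdcochain,\stdcochain}\ge(1-\lambda)\bigl(\norm{\stdcochain}^{2}-\norm{A\stdcochain}^{2}\bigr)=(1-\lambda)\innerprod{(I-A^{2})\stdcochain,\stdcochain}.
\]

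\textbf{Step 3 (finishing) and the main obstacle.} The last display says $(I-A)-(1-\lambda)(I-A^{2})=(I-A)\bigl(\lambda I-(1-\lambda)A\bigr)\succeq0$ on all of $\cochainset{0}{\stdcomplex;\R}$. Both factors are polynomials in the self‑adjoint operator $A$, hence commute and are simultaneously diagonalisable; on $\ind{}^{\perp}$ every eigenvalue $\mu$ of $A$ has $\mu<1$ (an eigenvalue $1$ there would be a non‑constant $A$‑harmonic function, impossible since $\stdcomplex$, hence $\skeleton{\stdcomplex}{1}$, is connected), so $I-A$ is positive definite on $\ind{}^{\perp}$; therefore $\lambda I-(1-\lambda)A\succeq0$ on $\ind{}^{\perp}$, i.e.\ $\lambda_2(A)\le\tfrac{\lambda}{1-\lambda}$, which is the claim. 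I expect the genuine work, rather than the idea, to sit in Step 1: one must unwind the weighted‑complex and link definitions to verify that the vertex‑ and edge‑weights of $\stdcomplex_{\stdvertex}$ are \emph{literally} the conditional distributions read off from $\weightletter$, so that the two sampling schemes coincide exactly rather than merely up to an unnoticed constant; the same care is needed for the two identities used in Step 2. One pitfall worth flagging: the pointwise inequality of Step 2 holds for all $\stdcochain$, yet by itself it does not prove the theorem — for one‑sided expanders $\norm{A\stdcochain}$ can be as large as $\norm{\stdcochain}$ on cochains concentrated near the bottom of $\spec(A)$ — so the operator argument of Step 3 (equivalently, reducing to a top eigenvector of $A$ on $\ind{}^{\perp}$) is what makes the bound go through.
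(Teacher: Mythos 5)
Your proof is correct, and it is the direct Garland/Oppenheim argument rather than the route the paper takes. The paper obtains the statement as an instantiation of its Bootstrapping Theorem (Theorem~\ref{thm:main}) with the restriction link viewer $\resviewer$: the local-to-global content of your Step 1 is packaged there in the lemma that $\resviewer$ is a link viewer respecting the non-lazy walk, your identity $\innerprod{\stdcochain_{\stdvertex},\ind{}}_{\stdvertex}=(A\stdcochain)(\stdvertex)$ is exactly Lemma~\ref{lem:trickling-down-advantage-technical}, and your final quadratic inequality appears there as the recursive condition $\lambda_{\emptyset,1,k}+(1-\lambda_{\emptyset,1,k})\lambda_{\emptyset,0,k}^{2}\le\lambda_{\emptyset,0,k}$, solved by the fixed point $\frac{\lambda}{1-\lambda}$. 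What the paper's route buys is its unification message: the very same bootstrapping theorem that yields the fine-grained random-walk decomposition also yields trickling down. What your route buys is self-containedness and a cleaner handling of the one-sided spectrum: by rewriting $\norm{A\stdcochain}^{2}=\innerprod{A^{2}\stdcochain,\stdcochain}$ and factoring $(I-A)\parens{\lambda I-(1-\lambda)A}\succeq 0$, you never need a norm bound of the form $\norm{A\stdcochain}\le\lambda_{0}\norm{\stdcochain}$ on $\ind{}^{\perp}$ (which is sensitive to the most negative eigenvalue, and which the paper's advantage step, Corollary~\ref{cor:trickling-down-advantage}, invokes rather loosely), and connectedness enters only to exclude the eigenvalue $1$ on $\ind{}^{\perp}$. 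Your flagged pitfall is exactly the right one, and your Step 1 and Step 2 bookkeeping identities do check out against the paper's weight conventions ($\weight[\stdvertex][\genvertex]=\weight[\set{\stdvertex,\genvertex}]/(2\weight[\stdvertex])$, every edge lying in a triangle by purity); the only superfluous move is the reduction to $\lambda<\tfrac12$, since the factorization gives $\mu\le\frac{\lambda}{1-\lambda}$ for every $\lambda<1$ and the claim is vacuous otherwise.
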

In order to prove the trickling down we use Theorem~\ref{thm:main-informal} while defining the $i$-level cochains differently.
For more detail, see Section~\ref{sec:trickling-down}.
\subsection{Proof Layout}\label{subsec:proof-layout}
As we mentioned before, we analyze the \emph{non-lazy up-down random walk} - a high dimensional analogue of the non-lazy random walk in graphs.
We would like to get a decomposition of the non-lazy random walk operator.
In order to do that we decompose the space of cochains to spaces which we term level cochains.
Our proof is then comprised of two steps:
A bootstrapping Lemma that reduces the problem of decomposing the non-lazy up-down random walk operator to a simple recursive condition about the $0$-level cochains and an advantage that allows us to solve said recursive condition.
We note that solving the recursive condition is considerably simpler than proving the decomposition directly as the $0$-level cochains are very structured objects.

\paragraph{Link viewers} Local-to-global arguments are typically structured by taking a global cochain, decomposing it to local cochains (i.e.\ cochains on the links of the vertices of the complex) and then using a property of the cochains in the links in order to argue about the original cochain.
Two extremely useful ways of decomposing a global cochain to cochains in the links are \emph{restriction} and \emph{localization}.
These two methods share many properties, for example:
\begin{itemize}
    \item They preserve constant functions - Any global constant function is constant locally.
    \item They are linear - The local view of sum of any two global cochains is the sum of the local views.
    \item They interact well with the inner product - The expected value of the local inner products of two cochains is their inner product.
\end{itemize}
We start by identifying these properties and defining a general object that satisfy them called a \emph{link viewer}.
A link viewer $\viewer$ defines the local view of a cochain $\stdcochain$ in the link of $\stdface$ which we denote by $\viewer[\stdface][\stdcochain]$.
For the rest of this section, unless stated otherwise, we will think of the \emph{localization link viewer} defined as $\locviewer[\stdface][\stdcochain] = \stdcochain_{\stdface}$.

\paragraph{Level cochains} Any link viewer decomposes the space of cochains to subspaces called \emph{$i$-level cochains}.
These spaces are the set of cochains whose expected value is $0$ when viewed from any $i$-dimensional face.
We think of these cochains as not correlated with any $i$-dimensional face.
For example, under the localization link viewer the $i$-level cochains are cochains $\gencochain$ that satisfy: $\forall \stdface \in \stdcomplex(i): \ev{\genface \in \stdcomplex_{\stdface}(k-i-1)}{\locviewer[\stdface][\gencochain](\genface)} = \ev{\genface \in \stdcomplex_{\stdface}(k-i-1)}{\gencochain_{\stdface}(\genface)} = 0$.
Note that any $i$-level cochain is also an $(i-1)$-level cochain and that of $\stdcochain$ is an $i$-level cochain then $\viewer[\stdvertex][\stdcochain]$ is an $(i-1)$-level cochain in the link of every vertex $\stdvertex$.
It is therefore useful to define \emph{proper level cochains} as well - a proper $i$-level cochain is an $i$-level cochain that is orthogonal to every $(i-1)$-level cohain.
Every cochain $\stdcochain$ can be decomposed to $\stdcochain = \sum_{i=-1}^p{\stdcochain_i}$ such that every $\stdcochain_i$ is a proper $i$-level cochain.
This decomposition plays a key role in our proof.

\paragraph{The bootstrapping argument} We can now present our proof for the bootstrapping argument.
We assume that the non-lazy up-down random walk expands locally and show that this yields that the non-lazy up-down random walk expands globally.
We restrict our attention only to $0$-level cochains (which are not necessarily proper) as $(-1)$-level cochains correspond to the trivial eigenvalue in which we are not interested.
Let $\stdcochain=\sum_{i=0}^p{\stdcochain_i}$ be a $0$-level cochain.
Consider its localization to the links of vertices $\viewer[\stdvertex][\stdcochain]=\sum_{i=0}^p{\viewer[\stdvertex][\stdcochain_i]}$.
Note that $\sum_{i=1}^p{\viewer[\stdvertex][\stdcochain_i]}$ is a $0$-level cochain in $\stdcomplex_{\stdvertex}$(as the localization to links of vertices of $i$-level cochains is an $(i-1)$-level cochain).
Note that $\viewer[\stdvertex][\stdcochain_0]$ is \emph{not} a $0$-level cochain.
We can, however, apply our decomposition theorem to some part of $\viewer[\stdvertex][\stdcochain_0]$ by decomposing it to two parts:
\begin{itemize}
    \item A $0$-level cochain - $\viewer[\stdvertex][\stdcochain_0] - \ev{}{\viewer[\stdvertex][\stdcochain_0]}$
    \item A remainder - $\ev{}{\viewer[\stdvertex][\stdcochain_0]}$.
\end{itemize}
We can therefore apply our decomposition theorem to $\sum_{i=0}^p{\viewer[\stdvertex][\stdcochain_i]}-\ev{}{\viewer[\stdvertex][\stdcochain_0]}$.
Note that a key step in this decomposition is that despite the fact that these new localized cochains are \emph{not} orthogonal to each other, they are \emph{all} orthogonal to the constants.
This allows us to separate them from the constant part of every localization.
All we have left to do is to compensate for the remainder.
This is done using the advantage step which we will describe next.

\paragraph{The Advantage} In the advantage step we try to bound the remainder we have left in the bootstrapping stage.
The advantage required in order to achieve the random walk decomposition Theorem is done by considering the localization link viewer that showing the following observations:
\begin{enumerate}
    \item Any $0$-level function $\stdcochain_0$ is, in a sense, a high dimensional description of some other $0$-dimensional cochain $\gencochain \in \cochainset{0}{\stdcomplex;\R}$.
    This is true in the sense that $\norm{\stdcochain_0}=\norm{\gencochain}$ and $\ev{}{\locviewer[\stdvertex][\stdcochain_0]} = \norm{\upoperator[k][0]\gencochain}$ where $\upoperator[k][0]$ is the random walk the applied the up step $k$ times followed by applying the down step $k$ times.

    \item There is a connection between the random walk on the underlying graph of the complex and $\upoperator[k][]$.
    The key claim we use is that once the random walk has performed its first up step \emph{it can already ``see'' all the vertices it will ever see} (this is due to the structure of simplicial complexes - if two vertices share a $k$-dimensional face then by the closure property they also share an edge).
    Going further up only decreases the probability of staying in place.
    Therefore $\upoperator[k][]$ is a weighted sum of the underlying graph's non-lazy random walk transition matrix and a lazy component (which corresponds to staying in place).
    We note that this observation can be generalized to any random walk on $k$-dimensional faces that ``walks through'' faces of dimension higher than $2k$.
\end{enumerate}
Using these observations as well as our understanding of the $0$-dimensional random walk we obtain the advantage we seek.

\section{The Signless Differential and Its Adjoint Operator}\label{sec:the-signless-differential-and-its-adjoint-operator}
One of the key operators induced by any simplicial complex is its \emph{signless differential operator}.
The signless differential operator is an averaging operator that accepts a $k$-dimensional cochain and returns a $(k+1)$-dimensional cochain.
We adopt the terminology of~\cite{DBLP:conf/approx/KaufmanO18} and consider repeated application of the signless differential and its adjoint operator.

\begin{definition}[Signless differential]
    The signless differential operator \\ $\sldoperator_k: \cochainset{k}{\stdcomplex; \R} \rightarrow \cochainset{k+1}{\stdcomplex; \R}$ in the following way:
    \[
        \sldoperator_k \stdcochain(\stdface) 
        = \ev{\genface \in \stdcomplex(k)}{\stdcochain(\genface) \suchthat \genface \subseteq \stdface}
        = \sum_{\genface \in \binom{\stdface}{k+1}}{\frac{1}{k+2}\stdcochain\parens{\genface}}
    \]
    When the dimension is clear from context it will be omitted from the notation.
    Also define \\ $\sldoperator_k^*: \cochainset{k+1}{\stdcomplex ; \R} \rightarrow \cochainset{k}{\stdcomplex ; \R}$ to be the adjoint operator of $\sldoperator_k$.
\end{definition}
\begin{note*}
    The signless differential does not meet the definition of a differential as $\sldoperator_k \sldoperator_{k-1} \ne 0$.
\end{note*}
\begin{lemma}[{\cite[Lemma~3.6]{DBLP:conf/approx/KaufmanO18}}, \cite{DBLP:conf/approx/DiksteinDFH18}]\label{lem:sld-adj-explicit}
    It holds that:
    \[
        \sldoperator_{k}^* \stdcochain(\genface) 
        = \ev{\stdface \in \stdcomplex(k+1)}{\stdcochain(\stdface) \suchthat \genface \subseteq \stdface} = \ev{\stdface \in \stdcomplex_{\genface}(0)}{\stdcochain_{\genface}(\stdface)}
    \]
\end{lemma}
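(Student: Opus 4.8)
The plan is to establish the explicit formula for $\sldoperator_k^*$ by unpacking the definition of the adjoint via the inner product on cochains, and then to recognize the resulting averaging operation as a conditional expectation over top faces containing a given lower face. First I would fix $\gencochain \in \cochainset{k}{\stdcomplex;\R}$ and $\stdcochain \in \cochainset{k+1}{\stdcomplex;\R}$, and compute $\innerprod{\stdcochain, \sldoperator_k \gencochain}$ by expanding $\sldoperator_k \gencochain(\stdface)$ using its definition as $\frac{1}{k+2}\sum_{\genface \in \binom{\stdface}{k+1}}\gencochain(\genface)$; this produces a double sum over faces $\stdface \in \stdcomplex(k+1)$ and their $k$-subfaces $\genface$, weighted by $\weight[\stdface]$. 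The key combinatorial step is to swap the order of summation so that the outer sum runs over $\genface \in \stdcomplex(k)$ and the inner sum over those $\stdface \in \stdcomplex(k+1)$ with $\genface \subseteq \stdface$; this is valid because the pairs $(\genface, \stdface)$ with $\genface \in \binom{\stdface}{k+1}$ are exactly the pairs with $\genface \in \stdcomplex(k)$, $\stdface \in \stdcomplex(k+1)$, $\genface \subseteq \stdface$ (using purity and downward closure).

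After the swap, I would collect the coefficient of $\gencochain(\genface)$, which is $\frac{1}{k+2}\sum_{\stdface \supseteq \genface}\weight[\stdface]$. Now I invoke the weight recursion from the Definition of Weight: $\weight[\genface] = \frac{1}{\binom{k+2}{k+1}}\sum_{\stdface \in \stdcomplex(k+1),\ \genface \subseteq \stdface}\weight[\stdface]$ — wait, the recursion is stated relative to the top dimension $d$, so I should instead use the analogous relation between consecutive dimensions, which follows by a standard averaging identity (each $(k+1)$-face has $k+2$ many $k$-subfaces, and summing $\weight$ over $(k+1)$-faces containing a fixed $\genface$ and dividing appropriately recovers $\weight[\genface]$ up to the factor $k+2$). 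Concretely, $\sum_{\stdface \in \stdcomplex(k+1),\ \genface \subseteq \stdface}\weight[\stdface] = (k+2)\weight[\genface]$; I would either cite this as a known fact about the weight function or derive it in one line from the top-dimensional recursion by splitting the sum over $d$-faces through intermediate $(k+1)$-faces. Substituting, the coefficient of $\gencochain(\genface)$ becomes exactly $\weight[\genface] \cdot \bigl(\frac{1}{(k+2)\weight[\genface]}\sum_{\stdface \supseteq \genface}\weight[\stdface]\stdcochain(\stdface)\bigr)$, so $\innerprod{\stdcochain, \sldoperator_k \gencochain} = \sum_{\genface \in \stdcomplex(k)}\weight[\genface]\gencochain(\genface)\bigl(\frac{1}{(k+2)\weight[\genface]}\sum_{\stdface \supseteq \genface}\weight[\stdface]\stdcochain(\stdface)\bigr)$, which by definition of the inner product identifies $\sldoperator_k^*\stdcochain(\genface) = \frac{1}{(k+2)\weight[\genface]}\sum_{\stdface \in \stdcomplex(k+1),\ \genface \subseteq \stdface}\weight[\stdface]\stdcochain(\stdface)$.

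Finally I would interpret this as an expectation: the quantity $\frac{\weight[\stdface]}{(k+2)\weight[\genface]}$, ranging over $\stdface \in \stdcomplex(k+1)$ with $\genface \subseteq \stdface$, is a probability distribution on such $\stdface$ (it is nonnegative and sums to $1$ by the identity just used), so $\sldoperator_k^*\stdcochain(\genface) = \ev{\stdface \in \stdcomplex(k+1)}{\stdcochain(\stdface) \suchthat \genface \subseteq \stdface}$. For the second equality in the statement, I would observe that the top faces $\stdface \in \stdcomplex(k+1)$ containing $\genface$ correspond bijectively to vertices $\stdvertex \in \stdcomplex_{\genface}(0)$ via $\stdface = \genface \cup \set{\stdvertex}$, that under this bijection $\stdcochain(\stdface) = \stdcochain_{\genface}(\stdvertex)$ by the definition of localization, and that the conditional distribution on $\stdface$ matches the weight distribution $\weight[\genface][\cdot]$ on $\stdcomplex_{\genface}(0)$ — this is immediate from the link-weight formula $\weight[\genface][\set{\stdvertex}] = \frac{\weight[\genface \cup \set{\stdvertex}]}{\binom{k+2}{k+1}\weight[\genface]} = \frac{\weight[\stdface]}{(k+2)\weight[\genface]}$. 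The only mildly delicate point is the weight-summation identity $\sum_{\stdface \supseteq \genface}\weight[\stdface] = (k+2)\weight[\genface]$ for consecutive dimensions, which I expect to be the main obstacle insofar as it requires care with the binomial normalization; everything else is a routine manipulation of the adjoint definition.
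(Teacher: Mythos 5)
Your proposal is correct and follows essentially the same route as the paper: expand the inner product of the signless differential against a test cochain, swap the order of summation, and identify the resulting coefficient $\frac{\weight[\stdface]}{(k+2)\weight[\genface]}$ with the link weight $\weight[\genface][\stdface \setminus \genface]$ to read off the adjoint as a conditional expectation over $(k+1)$-faces containing $\genface$, equivalently over vertices of $\stdcomplex_{\genface}$. The only difference is cosmetic: you verify explicitly that $\sum_{\stdface \supseteq \genface}\weight[\stdface] = (k+2)\weight[\genface]$, a normalization the paper leaves implicit in the fact that the link weights form a distribution.
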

\begin{proof}
    Consider, now, the following:
    \begin{align*}
        \innerprod{\sldoperator \stdcochain, \gencochain} & =
        \sum_{\stdface \in \stdcomplex(k+1)}{\weight[\stdface] \sldoperator\stdcochain(\stdface) \gencochain(\stdface)} =
        \sum_{\stdface \in \stdcomplex(k+1)}{\weight[\stdface] \ev{\genface \in \stdcomplex(k)}{\stdcochain(\genface) \suchthat \genface \subseteq \stdface} \gencochain(\stdface)} = \\
        & = \sum_{\stdface \in \stdcomplex(k+1)}{\weight[\stdface] \parens{\sum_{\substack{\genface \in \stdcomplex(k) \\ \genface \subseteq \stdface}}{\frac{1}{k+2}\stdcochain(\genface)}}\gencochain(\stdface)} = \\
        & = \sum_{\stdface \in \stdcomplex(k+1)}{\sum_{\substack{\genface \in \stdcomplex(k) \\ \genface \subseteq \stdface}}{\frac{\weight[\stdface]}{k+2}\stdcochain(\genface)\gencochain(\stdface)}} = \\
        & = \sum_{\genface \in \stdcomplex(k)}{\sum_{\substack{\stdface \in \stdcomplex(k+1) \\ \genface \subseteq \stdface}}{\frac{\weight[\stdface]}{(k+2) \weight[\genface]}\weight[\genface]\stdcochain(\genface)\gencochain(\stdface)}} = \\
        & = \sum_{\genface \in \stdcomplex(k)}{\weight[\genface]\stdcochain(\genface)\sum_{\substack{\stdface \in \stdcomplex(k+1) \\ \genface \subseteq \stdface}}{\frac{\weight[\stdface]}{(k+2) \weight[\genface]}\gencochain(\stdface)}} = \\
        & = \sum_{\genface \in \stdcomplex(k)}{\weight[\genface]\stdcochain(\genface)\sum_{\substack{\stdface \in \stdcomplex(k+1) \\ \genface \subseteq \stdface}}{\weight[\genface][\stdface]\gencochain(\stdface)}} = \\
        & = \sum_{\genface \in \stdcomplex(k)}{\weight[\genface]\stdcochain(\genface)\ev{\stdface \in \stdcomplex(k+1)}{\gencochain(\stdface) \suchthat \genface \subseteq \stdface}}
    \end{align*}
\end{proof}
We will be interested in repeated application of the signless differential and its adjoint operator.
To that effect it will be useful to present them explicitly.
Lemmas~\ref{lem:multi-sld-operator} and~\ref{lem:multi-sld-operator-adj} will present repeated applications of these operators explicitly.
\begin{lemma}\label{lem:multi-sld-operator}
Let $\stdcochain$ be an $i$-dimensional cochain.
Then:
\[
    \sldoperator_{j-1}\cdots \sldoperator_{i}\stdcochain(\stdface) = \ev{\genface \in \stdcomplex\parens{i}}{\stdcochain(\genface) \suchthat \genface \subseteq \stdface}
\]
\end{lemma}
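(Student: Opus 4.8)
The plan is to prove the identity by induction on $j$ (equivalently on $j-i$), where $\stdface$ is understood to be a $j$-dimensional face of $\stdcomplex$ and the composition $\sldoperator_{j-1}\cdots\sldoperator_i$ is read as the identity when $j=i$. The base case $j=i$ is vacuous, since then both sides equal $\stdcochain(\stdface)$, and the case $j=i+1$ is literally the definition of $\sldoperator_i$. For the inductive step, assume the statement for $j-1$ and write $\sldoperator_{j-1}\cdots\sldoperator_i = \sldoperator_{j-1}\circ\parens{\sldoperator_{j-2}\cdots\sldoperator_i}$. Applying the definition of $\sldoperator_{j-1}$ at the $j$-dimensional face $\stdface$ and then the inductive hypothesis to each $(j-1)$-dimensional face $\stdsecondface\subseteq\stdface$ yields
\[
    \sldoperator_{j-1}\cdots\sldoperator_i\stdcochain(\stdface)
    = \ev{\stdsecondface \in \stdcomplex(j-1)}{\ev{\genface \in \stdcomplex(i)}{\stdcochain(\genface) \suchthat \genface \subseteq \stdsecondface} \suchthat \stdsecondface \subseteq \stdface}.
\]

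The remaining task is to unfold this double expectation into an explicit average over the $i$-faces contained in $\stdface$. Because $\stdcomplex$ is downward closed and $\stdface\in\stdcomplex$, every subset of $\stdface$ is a face, so the outer expectation is the uniform average over the $\binom{j+1}{j}=j+1$ subsets $\stdsecondface$ of $\stdface$ of size $j$, and for each fixed such $\stdsecondface$ the inner expectation is the uniform average over its $\binom{j}{i+1}$ subsets of size $i+1$. Hence a fixed $i$-face $\genface\subseteq\stdface$ receives total coefficient
\[
    \frac{1}{j+1}\cdot\frac{1}{\binom{j}{i+1}}\cdot\#\set{\stdsecondface \suchthat \genface \subseteq \stdsecondface \subseteq \stdface,\ \dimension{\stdsecondface}=j-1}.
\]
Such a $\stdsecondface$ is determined by choosing $j-i-1$ of the $j-i$ vertices of $\stdface\setminus\genface$, so the count is $\binom{j-i}{j-i-1}=j-i$, and a one-line computation gives $\frac{j-i}{(j+1)\binom{j}{i+1}} = \frac{(i+1)!\,(j-i)!}{(j+1)!} = \frac{1}{\binom{j+1}{i+1}}$. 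Since $\frac{1}{\binom{j+1}{i+1}}$ is exactly the weight each $i$-subset of $\stdface$ carries in $\ev{\genface \in \stdcomplex(i)}{\stdcochain(\genface) \suchthat \genface \subseteq \stdface}$, the two sides agree and the induction closes.

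The only step with any real content is the counting identity in the last paragraph, which is just a \emph{tower property} for uniform averaging over subsets of a fixed set: averaging uniformly down from dimension $j$ to dimension $j-1$ and then uniformly down to dimension $i$ is the same as averaging uniformly from dimension $j$ straight to dimension $i$. I do not anticipate any genuine obstacle; the points to be careful about are keeping the binomial bookkeeping straight and using downward closure of $\stdcomplex$ so that the conditional-expectation notation ``$\ev{\cdot}{\,\cdot \suchthat \cdot \subseteq \cdot\,}$'' really does denote the uniform average over \emph{all} subsets of the relevant size.
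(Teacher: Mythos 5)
Your proposal is correct and follows essentially the same route as the paper: induction on the number of operators, expanding the outermost signless differential as a uniform average over codimension-one subfaces, applying the inductive hypothesis, and closing with the binomial double-counting that collapses the nested averages into a single uniform average over $i$-faces of $\stdface$. The only cosmetic differences are your explicit $j=i$ base case and that you compute the coefficient of a fixed $i$-face directly rather than manipulating the nested sums as the paper does; the counting identity $(j-i)/\bigl((j+1)\binom{j}{i+1}\bigr)=1/\binom{j+1}{i+1}$ matches the paper's computation up to an index shift.
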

\begin{proof}
    By induction, the case of $j=i+1$ follows from the definition of $\sldoperator_i$.

    Induction step:
    \begin{align*}
        \sldoperator_{j}\cdots \sldoperator_{i}\stdcochain(\stdface)
        & = \ev{\genface \in \stdcomplex(j)}{\sldoperator_{j}\cdots \sldoperator_{i}\stdcochain(\genface) \suchthat \genface \subseteq \stdface}
        = \ev{\genface \in \stdcomplex(j)}{\ev{\genface' \in \stdcomplex\parens{i}}{\stdcochain(\genface') \suchthat \genface' \subseteq \genface} \suchthat \genface \subseteq \stdface}\\
        & = \sum_{\genface \in \binom{\stdface}{j+1}}{\frac{1}{j+2}\sum_{\genface' \in \binom{\genface}{i+1}}{\frac{1}{\binom{j+1}{i+1}}\stdcochain(\genface')}}
        = \sum_{\genface \in \binom{\stdface}{j+1}}{\frac{1}{j+2}\sum_{\genface' \in \binom{\genface}{i+1}}{\frac{(j-i)!(i+1)!}{(j+1)!}\stdcochain(\genface')}}\\
        & = \sum_{\genface \in \binom{\stdface}{j+1}}{\sum_{\genface' \in \binom{\genface}{i+1}}{\frac{(j-i)!(i+1)!}{(j+2)!}\stdcochain(\genface')}}
        = \sum_{\genface \in \binom{\stdface}{i+1}}{(j-i+1)\frac{(j-i)!(i+1)!}{(j+2)!}\stdcochain(\genface)}\\
        & = \sum_{\genface \in \binom{\stdface}{i+1}}{\frac{(j-i+1)!(i+1)!}{(j+2)!}\stdcochain(\genface)}
        = \sum_{\genface \in \binom{\stdface}{i+1}}{\frac{1}{\binom{j+2}{i+1}}\stdcochain(\genface)}\\
        & = \ev{\genface \in \stdcomplex\parens{i}}{\stdcochain(\genface) \suchthat \genface \subseteq \stdface}
    \end{align*}
\end{proof}
We turn our attention to showing that the adjoint operator of the signless differential localizes to the links in the following way:
\begin{lemma}\label{lem:down-operator-and-localisation}
For every cochain $\stdcochain$ and every face $\genface$ it holds that:
\[
    \sldoperator_{\genface}^* \stdcochain_{\genface} = (\sldoperator^* \stdcochain)_{\genface}
\]
\end{lemma}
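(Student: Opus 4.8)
The plan is to verify the identity pointwise. Fixing dimensions so that everything is type-correct—say $\stdcochain \in \cochainset{m+1}{\stdcomplex;\R}$ and $\genface \in \stdcomplex(i)$, so that both $\sldoperator_{\genface}^* \stdcochain_{\genface}$ and $(\sldoperator^* \stdcochain)_{\genface}$ lie in $\cochainset{m-i-1}{\stdcomplex_{\genface};\R}$—it suffices to show that the two cochains agree on an arbitrary face $\stdsecondface \in \stdcomplex_{\genface}(m-i-1)$. I would expand each side by combining the explicit description of the adjoint in Lemma~\ref{lem:sld-adj-explicit} with the definition of localization. On the right, $(\sldoperator^* \stdcochain)_{\genface}(\stdsecondface) = \sldoperator^* \stdcochain(\genface \cup \stdsecondface) = \ev{\stdvertex \in \stdcomplex_{\genface \cup \stdsecondface}(0)}{\stdcochain(\genface \cup \stdsecondface \cup \set{\stdvertex})}$, the average being over vertices $\set{\stdvertex}$ of $\stdcomplex_{\genface \cup \stdsecondface}$ with the link weights $\weight[\genface \cup \stdsecondface][\cdot]$. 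On the left, $\sldoperator_{\genface}^* \stdcochain_{\genface}(\stdsecondface) = \ev{\stdvertex \in (\stdcomplex_{\genface})_{\stdsecondface}(0)}{(\stdcochain_{\genface})_{\stdsecondface}(\set{\stdvertex})}$, and unwinding the two localizations gives $(\stdcochain_{\genface})_{\stdsecondface}(\set{\stdvertex}) = \stdcochain_{\genface}(\stdsecondface \cup \set{\stdvertex}) = \stdcochain(\genface \cup \stdsecondface \cup \set{\stdvertex})$, with the average now taken with the iterated-link weights $(\weightletter_{\genface})_{\stdsecondface}(\cdot)$.

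At this point the claim has been reduced to a purely structural statement about links: the ``link of a link'' $(\stdcomplex_{\genface})_{\stdsecondface}$ agrees, as a weighted complex, with the link $\stdcomplex_{\genface \cup \stdsecondface}$ of the union. The set-level equality is immediate from the definition of the link once one observes that $\stdsecondface \in \stdcomplex_{\genface}$ forces $\stdsecondface \cap \genface = \emptyset$ and $\genface \cup \stdsecondface \in \stdcomplex$; in particular the two index sets $(\stdcomplex_{\genface})_{\stdsecondface}(0)$ and $\stdcomplex_{\genface \cup \stdsecondface}(0)$ coincide. For the weights, I would substitute the link-weight formula twice on the left (first for $\stdsecondface \cup \set{\stdvertex}$ and for $\stdsecondface$ inside $\stdcomplex_{\genface}$, then for each of those inside $\stdcomplex$) and once on the right, and check that the normalizing binomials collapse: with $j = \dimension{\stdsecondface}$ one uses $\binom{i+j+2}{i+1}\big/\binom{i+j+3}{i+1} = \tfrac{j+2}{i+j+3}$, which makes the iterated-link weight of $\set{\stdvertex}$ equal to $\tfrac{\weight[\genface \cup \stdsecondface \cup \set{\stdvertex}]}{(i+j+3)\,\weight[\genface \cup \stdsecondface]}$; since $\dimension{\genface \cup \stdsecondface} = i+j+1$ and $\binom{i+j+3}{i+j+2} = i+j+3$, this is exactly $\weight[\genface \cup \stdsecondface][\set{\stdvertex}]$. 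With the index sets and the weights matching term by term, the two averages are literally the same expression, which proves the lemma.

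The main obstacle is this last bookkeeping of the binomial normalizations; the rest—unwinding localizations and invoking Lemma~\ref{lem:sld-adj-explicit}—is formal. A cleaner alternative, if one prefers, is to isolate the weighted identity $(\stdcomplex_{\genface})_{\stdsecondface} = \stdcomplex_{\genface \cup \stdsecondface}$ as a standalone lemma (it is a fact worth having on its own and is used implicitly throughout link-based arguments), after which the present proof is two lines. Yet another option is an adjointness argument—checking $\innerprod{\sldoperator_{\genface}^* \stdcochain_{\genface}, \gencochain}_{\stdcomplex_{\genface}} = \innerprod{(\sldoperator^* \stdcochain)_{\genface}, \gencochain}_{\stdcomplex_{\genface}}$ for all test cochains $\gencochain$ on $\stdcomplex_{\genface}$—but this again routes through the same weighted link-of-link identity (now hidden inside comparing $\innerprod{\cdot,\cdot}_{\stdcomplex_{\genface}}$ with $\innerprod{\cdot,\cdot}_{\stdcomplex}$ and $\sldoperator_{\genface}$ with $\sldoperator$), so it is not obviously shorter.
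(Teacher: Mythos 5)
Your proof is correct and takes essentially the same route as the paper's: evaluate both sides pointwise via the explicit adjoint formula of Lemma~\ref{lem:sld-adj-explicit} and identify the iterated link $\parens{\stdcomplex_{\genface}}_{\stdsecondface}$ with $\stdcomplex_{\genface \cup \stdsecondface}$. The only difference is that you verify the weighted link-of-link identity (including the binomial bookkeeping) explicitly, whereas the paper's one-line chain of equalities uses it silently.
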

\begin{proof}
    The Lemma holds due to:
    \[
        \sldoperator_{\genface}^* \stdcochain_{\genface}(\stdface)
        = \ev{\genface' \in \parens{\stdcomplex_{\genface}}_{\stdface}(0)}{\parens{\stdcochain_{\genface}}_{\stdface}(\genface')}
        = \ev{\genface' \in \stdcomplex_{\genface\stdface}(0)}{\stdcochain_{\genface\stdface}(\genface')}
        = \sldoperator^* \stdcochain(\stdface\genface)
        = (\sldoperator^* \stdcochain)_{\genface} (\stdface)
    \]
\end{proof}
\begin{lemma}\label{lem:multi-sld-operator-adj}
Let $\stdcochain$ be an $i$-dimensional cochain.
Then:
\[
    \sldoperator^{*}_{i}\cdots \sldoperator^{*}_{j-1}\stdcochain(\stdface) = \ev{\genface \in \stdcomplex_{\stdface}(j-i-1)}{\stdcochain_{\stdface}(\genface)}
\]
\end{lemma}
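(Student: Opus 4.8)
The plan is to deduce the formula by duality, combining the explicit description of the iterated \emph{upward} operator from Lemma~\ref{lem:multi-sld-operator} with the explicit weights on the links. First observe that for the composition $\sldoperator^{*}_{i}\cdots \sldoperator^{*}_{j-1}$ to be defined the cochain $\stdcochain$ must be $j$-dimensional (so that $\sldoperator^{*}_{j-1}$ acts first, $\sldoperator^{*}_{j-2}$ next, and so on down to $\sldoperator^{*}_{i}$), and the output $\sldoperator^{*}_{i}\cdots \sldoperator^{*}_{j-1}\stdcochain$ is a cochain on $\stdcomplex(i)$, which is where $\stdface$ ranges. Since the adjoint of a composition is the reversed composition of adjoints, $\sldoperator^{*}_{i}\cdots \sldoperator^{*}_{j-1} = \parens{\sldoperator_{j-1}\cdots \sldoperator_{i}}^{*}$. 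Writing $\psi$ for the claimed right-hand side, i.e.\ $\psi(\stdface) = \ev{\genface \in \stdcomplex_{\stdface}(j-i-1)}{\stdcochain_{\stdface}(\genface)}$, it therefore suffices to check that $\innerprod{\gencochain, \psi} = \innerprod{\sldoperator_{j-1}\cdots \sldoperator_{i}\gencochain, \stdcochain}$ for every $\gencochain \in \cochainset{i}{\stdcomplex;\R}$; since the inner product is positive definite this identity forces $\sldoperator^{*}_{i}\cdots \sldoperator^{*}_{j-1}\stdcochain = \psi$, which is exactly the lemma.

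To evaluate the right-hand side I would substitute the formula of Lemma~\ref{lem:multi-sld-operator}, $\sldoperator_{j-1}\cdots \sldoperator_{i}\gencochain(\stdface) = \frac{1}{\binom{j+1}{i+1}}\sum_{\genface \in \binom{\stdface}{i+1}}\gencochain(\genface)$, into the definition of the inner product over $\stdcomplex(j)$, and then rearrange the double sum exactly as in the proof of Lemma~\ref{lem:sld-adj-explicit}: summing first over $\genface \in \stdcomplex(i)$ and then over the $j$-faces $\stdface$ with $\genface \subseteq \stdface$, and multiplying and dividing by $\weight[\genface]$, one reaches
\[
    \innerprod{\sldoperator_{j-1}\cdots \sldoperator_{i}\gencochain, \stdcochain}
    = \sum_{\genface \in \stdcomplex(i)}{\weight[\genface]\,\gencochain(\genface)\sum_{\substack{\stdface \in \stdcomplex(j)\\ \genface \subseteq \stdface}}{\frac{\weight[\stdface]}{\binom{j+1}{i+1}\weight[\genface]}\,\stdcochain(\stdface)}}.
\]
What remains is to identify the inner sum with $\ev{\genface' \in \stdcomplex_{\genface}(j-i-1)}{\stdcochain_{\genface}(\genface')} = \psi(\genface)$.

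That identification is the only place with real content, and it is a matter of matching the combinatorial normalizations. Under the bijection that sends a $j$-face $\stdface \supseteq \genface$ to $\genface' = \stdface \setminus \genface \in \stdcomplex_{\genface}(j-i-1)$, the definition of the induced weight on the link of $\genface \in \stdcomplex(i)$ reads $\weight[\genface][\genface'] = \weight[\genface' \cup \genface] / \parens{\binom{i + (j-i-1) + 2}{i+1}\weight[\genface]} = \weight[\stdface] / \parens{\binom{j+1}{i+1}\weight[\genface]}$, and $\stdcochain_{\genface}(\genface') = \stdcochain(\genface' \cup \genface) = \stdcochain(\stdface)$; hence the inner sum equals $\sum_{\genface' \in \stdcomplex_{\genface}(j-i-1)}\weight[\genface][\genface']\,\stdcochain_{\genface}(\genface')$, which is precisely $\psi(\genface)$. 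Substituting this back gives $\innerprod{\sldoperator_{j-1}\cdots \sldoperator_{i}\gencochain, \stdcochain} = \sum_{\genface \in \stdcomplex(i)}\weight[\genface]\,\gencochain(\genface)\,\psi(\genface) = \innerprod{\gencochain, \psi}$, as needed. The ``obstacle'' is thus entirely bookkeeping: one has to notice that the factor $\binom{j+1}{i+1}$ accumulated from iterating the signless differential $j-i$ times is exactly the binomial coefficient built into the link weights for $(j-i-1)$-dimensional faces of an $i$-dimensional link, so that the sum over super-faces telescopes into a single link average. An induction on $j-i$ using Lemma~\ref{lem:sld-adj-explicit} as the base case and Lemma~\ref{lem:down-operator-and-localisation} for the step is also feasible, but it reduces to the same identity, so I would prefer the adjointness route.
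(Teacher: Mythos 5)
Your argument is correct, but it takes a genuinely different route from the paper. The paper proves the lemma by induction on $j-i$: the base case is Lemma~\ref{lem:sld-adj-explicit}, and the induction step uses Lemma~\ref{lem:down-operator-and-localisation} to commute localization past the adjoint and then collapses a double sum over pairs (vertex of the link, face of the deeper link) by explicit weight bookkeeping. You instead avoid induction entirely: you write $\sldoperator^{*}_{i}\cdots \sldoperator^{*}_{j-1} = \parens{\sldoperator_{j-1}\cdots \sldoperator_{i}}^{*}$, take the explicit form of the iterated signless differential from Lemma~\ref{lem:multi-sld-operator}, and verify that the candidate $\psi(\stdface) = \ev{\genface \in \stdcomplex_{\stdface}(j-i-1)}{\stdcochain_{\stdface}(\genface)}$ satisfies the defining adjointness identity, the crux being the identification $\weight[\genface][\stdface \setminus \genface] = \weight[\stdface]/\parens{\binom{j+1}{i+1}\weight[\genface]}$, which matches the link-weight definition exactly. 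Your route is shorter, makes the "why" transparent (the binomial normalization accumulated by iterating $\sldoperator$ is precisely the one built into the link weights), and never needs Lemma~\ref{lem:down-operator-and-localisation}; its only extra ingredient is nondegeneracy of the inner product (positive face weights), which the paper already uses implicitly whenever it divides by $\weight[\genface]$, and the observation (correct, and worth stating since the lemma as printed says ``$i$-dimensional'') that the input cochain must be $j$-dimensional. The paper's inductive proof, in exchange, establishes and exploits the compatibility of $\sldoperator^{*}$ with localization, which is structurally parallel to how link viewers are used later; both arguments are sound.
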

\begin{proof}
    By induction, the case where $j=i+1$ holds due to Lemma~\ref{lem:sld-adj-explicit}.
    The induction step follows the following argument:
    \begin{align*}
        \sldoperator^{*}_{i}\cdots \sldoperator^{*}_{j}\stdcochain(\stdface)
        & = \ev{\stdvertex \in \stdcomplex_{\stdface}(0)}{\parens{\sldoperator^{*}_{i+1}\cdots \sldoperator^{*}_{j}\stdcochain}_{\stdface}(\stdvertex)}
        =\ev{\stdvertex \in \stdcomplex_{\stdface}(0)}{\sldoperator^{*}_{\stdface,i}\cdots \sldoperator^{*}_{\stdface,j-1}\stdcochain_{\stdface}(\stdvertex)}\\
        &=\ev{\stdvertex \in \stdcomplex_{\stdface}(0)}{\ev{\genface \in \stdcomplex_{\stdface\stdvertex}(j-i-1)}{\stdcochain_{\stdface\stdvertex}(\genface)}}
        = \sum_{\stdvertex \in \stdcomplex_{\stdface}(0)}{\weight[\stdface][\stdvertex]\sum_{\genface \in \stdcomplex_{\stdface\stdvertex}(j-i-1)}{\weight[\stdface\stdvertex][\genface]\stdcochain_{\stdface\stdvertex}(\genface)}}\\
        &= \sum_{\stdvertex \in \stdcomplex_{\stdface}(0)}{\sum_{\genface \in \stdcomplex_{\stdface\stdvertex}(j-i-1)}{\weight[\stdface][\stdvertex]\weight[\stdface\stdvertex][\genface]\stdcochain_{\stdface}(\genface\stdvertex)}}\\
        &= \sum_{\stdvertex \in \stdcomplex_{\stdface}(0)}{\sum_{\genface \in \stdcomplex_{\stdface\stdvertex}(j-i-1)}{\frac{\weight[\stdface][\genface\stdvertex]}{\binom{\abs{\genface\stdvertex}}{\abs{\stdvertex}}}\stdcochain_{\stdface}(\genface\stdvertex)}}\\
        &= \sum_{\genface' \in \stdcomplex_{\stdface}(j-i)}{\weight[\stdface][\genface']\stdcochain_{\stdface}(\genface')}\\
        &= \ev{\genface' \in \stdcomplex_{\stdface}(j-i)}{\stdcochain_{\stdface}(\genface')}
    \end{align*}
    The second equality is due to Lemma~\ref{lem:down-operator-and-localisation}.
\end{proof}
\section{Up-Down and Down-Up Operators}\label{sec:up-down-and-down-up-operators}
In this section we will present two objects of interest - the up-down and down-up walks.
These are natural operators that result from considering a standard walk on the $k$-dimensional faces of a simplicial complex using the following two steps:
A \emph{down step} where, given a $k$-dimensional face, the walk moves to a $(k-1)$-dimensional face that is contained in it with equal probability.
And an \emph{up step} in which, given a $k$-dimensional face, the walk moves to a $(k+1)$-dimensional face with probability proportional to its weight.
Applying the up step followed by the down step yields the up-down walk while applying the down step followed by the up step yields the down-up walk.
\begin{definition}[Up-down and down-up operators]
        Let $\stdcomplex$ be a simplicial complex.
    Then define the up-down random walk to be:
    \[
        \sparens{\upoperator}_{\stdface, \genface} = \begin{cases}
                                                         \frac{1}{k+2} & \stdface = \genface\\
                                                         \frac{\weight[\stdface][\genface \setminus \stdface]}{k+2} & \stdface \cup \genface \in \stdcomplex(k+1)\\
                                                         0 & \text{Otherwise}
        \end{cases}
    \]
    And the down-up random walk to be:
    \[
        \sparens{\downoperator}_{\stdface, \genface} = \begin{cases}
                                                           \frac{1}{k+1}\sum_{\genface' \in \binom{\stdface}{k}}{\weight[\genface'][\stdface \setminus \genface']} & \stdface = \genface\\
                                                           \frac{1}{k+1}\weight[\stdface \cap \genface][\genface \setminus \stdface] & \stdface \cap \genface \in \stdcomplex(k-1)\\
                                                           0 & \text{Otherwise}
        \end{cases}
    \]
\end{definition}
We would now like to present characterization the up-down random walk and the down-up random walk using the signless differential (this characterization had appeared in~\cite[Corollary~3.7]{DBLP:conf/approx/KaufmanO18} and is given here for completeness).
In order to do that, consider the following Lemma:
\begin{lemma}
    Let $\stdcomplex$ be a simplicial complex, it holds that:
    \[
        \upoperator[k]=\sldoperator^*_{k} \sldoperator_{k} \qquad \qquad \downoperator[k]=\sldoperator_{k-1} \sldoperator^{*}_{k-1}
    \]
\end{lemma}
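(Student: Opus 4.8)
The plan is to verify both identities by a direct computation, comparing matrix entries of the composed operators against the explicit formulas given in the definition of $\upoperator[k]$ and $\downoperator[k]$. For the up-down identity, I would compute $\sldoperator^*_k \sldoperator_k \stdcochain(\stdface)$ for a $k$-dimensional face $\stdface$ by first applying the adjoint formula from Lemma~\ref{lem:sld-adj-explicit}, which writes $\sldoperator^*_k \gencochain(\stdface) = \ev{\stdsecondface \in \stdcomplex(k+1)}{\gencochain(\stdsecondface) \suchthat \stdface \subseteq \stdsecondface}$, and then expanding $\gencochain = \sldoperator_k \stdcochain$ using the definition $\sldoperator_k \stdcochain(\stdsecondface) = \ev{\genface \in \stdcomplex(k)}{\stdcochain(\genface) \suchthat \genface \subseteq \stdsecondface}$. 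Interchanging the order of summation and collecting the coefficient of $\stdcochain(\genface)$ for each $k$-face $\genface$ should reproduce exactly the three cases: coefficient $\frac{1}{k+2}$ when $\genface = \stdface$ (the lazy term, since $\stdface$ sits in every $(k+1)$-face containing it), coefficient $\frac{\weight[\stdface][\genface \setminus \stdface]}{k+2}$ when $\stdface \cup \genface \in \stdcomplex(k+1)$, and $0$ otherwise. The weight bookkeeping here is the same kind of manipulation already carried out in the proof of Lemma~\ref{lem:sld-adj-explicit}, so the link-weight normalization $\weight[\stdface][\genface \setminus \stdface] = \frac{\weight[\stdface \cup \genface]}{\binom{k+2}{k+1}\weight[\stdface]}$ is what makes the probabilities come out right.

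For the down-up identity, I would argue symmetrically: compute $\sldoperator_{k-1}\sldoperator^*_{k-1}\stdcochain(\stdface)$ for $\stdface \in \stdcomplex(k)$ by first applying $\sldoperator^*_{k-1}$ to land in dimension $k-1$ via the adjoint formula, then applying $\sldoperator_{k-1}$, which averages over the $(k-1)$-faces contained in $\stdface$. After swapping sums, the coefficient of $\stdcochain(\genface)$ for each $k$-face $\genface$ is a sum over the common $(k-1)$-faces $\genface' \subseteq \stdface \cap \genface$; when $\stdface = \genface$ this ranges over all $\binom{\stdface}{k}$ facets giving the diagonal term $\frac{1}{k+1}\sum_{\genface' \in \binom{\stdface}{k}}\weight[\genface'][\stdface \setminus \genface']$, and when $\stdface \cap \genface \in \stdcomplex(k-1)$ there is a unique such $\genface'$, giving $\frac{1}{k+1}\weight[\stdface \cap \genface][\genface \setminus \stdface]$. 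Again the arithmetic is the familiar reindexing between a face and its co-dimension-one subfaces weighted by link weights.

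The only genuinely fiddly point — and the step I'd flag as the main obstacle — is keeping the combinatorial prefactors consistent: the $\frac{1}{k+2}$ from $\sldoperator_k$, the $\frac{1}{k+1}$ from $\sldoperator_{k-1}$, the binomial $\binom{k+1}{k}$ or $\binom{k+2}{k+1}$ hidden inside the link-weight definition, and the fact that the adjoint $\sldoperator^*$ is taken with respect to the weighted inner product rather than the counting inner product. A clean way to avoid errors is to first record, as a one-line corollary of Lemma~\ref{lem:sld-adj-explicit}, the explicit matrix entry $[\sldoperator_k]_{\genface,\stdsecondface} = \frac{1}{k+2}$ and $[\sldoperator^*_k]_{\stdsecondface,\genface} = \weight[\genface][\stdsecondface \setminus \genface]$ for $\genface \subseteq \stdsecondface$, and then simply multiply the two matrices, reading off each case of the target formula. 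Since the problem only asks to confirm two equalities between explicitly-defined operators, no inequality or spectral input is needed; it is purely a matter of matching entries.
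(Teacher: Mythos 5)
Your proposal is correct and follows essentially the same route as the paper: expand $\sldoperator^*_k\sldoperator_k$ and $\sldoperator_{k-1}\sldoperator^*_{k-1}$ using the explicit adjoint formula of Lemma~\ref{lem:sld-adj-explicit}, swap sums, and split the diagonal (lazy) contribution from the off-diagonal one to match the defining entries of $\upoperator$ and $\downoperator$. The weight bookkeeping you flag is exactly the manipulation carried out in the paper's proof, so no further ideas are needed.
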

\begin{proof}
    Let $\stdcochain \in \cochainset{k}{\stdcomplex;\mathbbm{R}}$ and $\stdface \in \stdcomplex(k)$, then:
    \begin{align*}
        \sldoperator^*\sldoperator \stdcochain(\stdface)
        & = \ev{\genface \in \stdcomplex_{\stdface}(0)}{\parens{\sldoperator\stdcochain}_{\stdface}(\genface)}
        = \sum_{\genface \in \stdcomplex_{\stdface}(0)}{\weight[\stdface][\genface]\parens{\sldoperator\stdcochain}_{\stdface}(\genface)}
        = \sum_{\genface \in \stdcomplex_{\stdface}(0)}{\weight[\stdface][\genface]\sldoperator\stdcochain(\genface \cup \stdface)} \\
        & = \sum_{\genface \in \stdcomplex_{\stdface}(0)}{\weight[\stdface][\genface]\frac{1}{k+2}\sum_{\substack{\genface' \in \stdcomplex(k) \\ \genface' \subseteq \stdface \cup \genface}}{\stdcochain(\genface')}} \\
        & = \frac{1}{k+2} \sum_{\genface \in \stdcomplex_{\stdface}(0)}{\sum_{\substack{\genface' \in \stdcomplex(k) \\ \genface' \subseteq \stdface \cup \genface}}{\weight[\stdface][\genface]\stdcochain(\genface')}} \\
        & = \frac{1}{k+2} \sum_{\genface \in \stdcomplex_{\stdface}(0)}{\parens{\weight[\stdface][\genface]\stdcochain(\stdface) + \sum_{\substack{\genface' \in \stdcomplex(k) \\ \genface' \subseteq \stdface \cup \genface \\ \genface' \ne \stdface}}{\weight[\stdface][\genface]\stdcochain(\genface')}}} \\
        & = \frac{1}{k+2} \parens{\sum_{\genface \in \stdcomplex_{\stdface}(0)}{\weight[\stdface][\genface]\stdcochain(\stdface)} + \sum_{\genface \in \stdcomplex_{\stdface}(0)}{\sum_{\substack{\genface' \in \stdcomplex(k) \\ \genface' \subseteq \stdface \cup \genface \\ \genface' \ne \stdface}}{\weight[\stdface][\genface]\stdcochain(\genface')}}} \\
        & = \frac{1}{k+2} \parens{\stdcochain(\stdface) + \sum_{\genface \in \stdcomplex_{\stdface}(0)}{\sum_{\substack{\genface' \in \stdcomplex(k) \\ \stdface \cup \genface' = \stdface \cup \genface}}{\weight[\stdface][\genface' \setminus \stdface]\stdcochain(\genface')}}} \\
        & = \frac{1}{k+2} \parens{\stdcochain(\stdface) + \sum_{\substack{\genface' \in \stdcomplex(k) \\ \stdface \cup \genface' \in \stdcomplex(k+1)}}{\weight[\stdface][\genface' \setminus \stdface]\stdcochain(\genface')}} \\
        & = \upoperator \stdcochain
    \end{align*}

    In addition:
    \begin{align*}
        \sldoperator\sldoperator^* \stdcochain(\stdface)
        & = \frac{1}{k+1} \sum_{\substack{\genface \in \stdcomplex(k-1) \\ \genface \subseteq \stdface}}{\sldoperator^* \stdcochain(\genface)}
        = \frac{1}{k+1} \sum_{\substack{\genface \in \stdcomplex(k-1) \\ \genface \subseteq \stdface}}{\ev{\genface' \in \stdcomplex_{\genface}(0)}{\stdcochain_{\genface}(\genface')}}\\
        & = \frac{1}{k+1} \sum_{\substack{\genface \in \stdcomplex(k-1) \\ \genface \subseteq \stdface}}{\sum_{\genface' \in \stdcomplex_{\genface}(0)}{\weight[\genface][\genface']\stdcochain_{\genface}(\genface')}}\\
        & = \frac{1}{k+1} \parens{\sum_{\substack{\genface \in \stdcomplex(k-1) \\ \genface \subseteq \stdface}}{\weight[\genface][\stdface]\stdcochain_{\genface}(\stdface \setminus \genface)} + \sum_{\substack{\genface \in \stdcomplex(k-1) \\ \genface \subseteq \stdface}}{\sum_{\substack{\genface' \in \stdcomplex_{\genface}(0) \\ \genface \cup \genface' \ne \stdface}}{\weight[\genface][\genface']\stdcochain_{\genface}(\genface')}}}\\
        & = \frac{1}{k+1} \parens{\sum_{\substack{\genface \in \stdcomplex(k-1) \\ \genface \subseteq \stdface}}{\weight[\genface][\stdface]\stdcochain(\stdface)} + \sum_{\substack{\genface \in \stdcomplex(k-1) \\ \genface \subseteq \stdface}}{\sum_{\substack{\genface'' \in \stdcomplex(k) \\ \genface'' \ne \stdface \\ \genface'' \cap \stdface = \genface}}{\weight[\genface][\genface'' \setminus \genface]\stdcochain_{\genface}(\genface'' \setminus \genface)}}}\\
        & = \frac{1}{k+1} \parens{\sum_{\substack{\genface \in \stdcomplex(k-1) \\ \genface \subseteq \stdface}}{\weight[\genface][\stdface]\stdcochain(\stdface)} + \sum_{\substack{\genface \in \stdcomplex(k-1) \\ \genface \subseteq \stdface}}{\sum_{\substack{\genface'' \in \stdcomplex(k) \\ \genface'' \cap \stdface = \genface}}{\weight[\genface][\genface'' \setminus \genface]\stdcochain(\genface'')}}}\\
        & = \frac{1}{k+1} \parens{\sum_{\substack{\genface \in \stdcomplex(k-1) \\ \genface \subseteq \stdface}}{\weight[\genface][\stdface]\stdcochain(\stdface)} + \sum_{\substack{\genface'' \in \stdcomplex(k) \\ \genface'' \cap \stdface \in \stdcomplex(k-1)}}{\weight[\genface'' \cap \stdface][\genface'' \setminus \stdface]\stdcochain(\genface'')}}\\
        & = \downoperator \stdcochain
    \end{align*}
\end{proof}

We are also going to be interested in applying the signless differential and its adjoint operator multiple times in a row.
We will therefore define the $k$-dimensional $i$-up-down operator and the $k$-dimensional $i$-down-up operator in the following way:
\begin{definition}
    Let $\upoperator[i][k]$ be the $k$-dimensional $i$-up-down operator and $\downoperator[i][k]$ be the $k$-dimensional $i$-down-up operator defined as follows:
    \[
        \upoperator[i][k]=\sldoperator^*_{k} \cdots \sldoperator^{*}_{k+i-1} \sldoperator_{k+i-1} \cdots \sldoperator_{k} \qquad \qquad \downoperator[i][k]=\sldoperator_{k-1} \cdots \sldoperator_{k-i-1} \sldoperator^{*}_{k-i-1} \cdots \sldoperator^{*}_{k-1}
    \]
\end{definition}
Recall that the $i$-up-down operator and the $i$-down-up operators can be presented explicitly using Lemma~\ref{lem:multi-sld-operator} and Lemma~\ref{lem:multi-sld-operator-adj}.
In addition, note that the $i$-up-down operator corresponds to applying the up step $i$ times and then applying the down step $i$ times.
Likewise the $i$-down-up operator corresponds to applying the down step $i$ times and then applying the up step $i$ times.

\section{The Non-Lazy Walk Operator}\label{sec:the-non-lazy-walk-operator}
Our main object of study is going to be the non-lazy $k$-dimensional random walk operator.
This operator is a generalization of the non-lazy random walk operator in graphs to higher dimensions.
In graphs the non-lazy random walk operator moves between two vertices if they have an edge connecting them.
The higher dimensional version of this operator is going to be something very similar:
It is going to move between two $k$-faces if there is a $(k+1)$-face that contains both faces.
\begin{definition}[The Non-Lazy $k$-dimensional Random Walk Operator]\label{def:non-lazy-up-down-random-walk}
    Let $\stdcomplex$ be a pure $d$-dimensional simplicial complex define the $k$-dimensional random walk operator to be the following operator:
    \[
        \sparens{\nlupoperator[k]}_{\stdface, \genface} = \begin{cases}
                                                              \frac{\weight[\stdface][\genface \setminus \stdface]}{k+1} & \stdface \cup \genface \in \stdcomplex(k+1)\\
                                                              0 & \text{Otherwise}
        \end{cases}
    \]
\end{definition}
One can also think of the non-lazy random walk operator as the regular up-down operator with the lazy part removed.
Formally:
\begin{observation}
    For every dimension $k$ it holds that:
    \[
        \nlupoperator[k] = \frac{k+2}{k+1} \upoperator - \frac{1}{k+1} I
    \]
\end{observation}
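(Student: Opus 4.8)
The plan is to verify the identity entrywise, comparing the $(\stdface,\genface)$ entry of $\nlupoperator[k]$ with that of $\frac{k+2}{k+1}\upoperator - \frac{1}{k+1}I$ for each pair of faces $\stdface,\genface \in \stdcomplex(k)$, using the explicit piecewise formulas for $\sparens{\nlupoperator[k]}_{\stdface,\genface}$ (Definition~\ref{def:non-lazy-up-down-random-walk}) and for $\sparens{\upoperator}_{\stdface,\genface}$ (the definition of the up-down operator). There are essentially three cases.

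First, the diagonal case $\stdface = \genface$: here $\stdface \cup \genface = \stdface$ is $k$-dimensional, hence never a face of $\stdcomplex(k+1)$, so $\sparens{\nlupoperator[k]}_{\stdface,\stdface}=0$, while the right-hand side is $\frac{k+2}{k+1}\cdot\frac{1}{k+2}-\frac{1}{k+1}=0$ since $\sparens{\upoperator}_{\stdface,\stdface}=\frac{1}{k+2}$. Second, the off-diagonal adjacent case $\stdface \ne \genface$ with $\stdface\cup\genface \in \stdcomplex(k+1)$: then $I_{\stdface,\genface}=0$ and $\sparens{\upoperator}_{\stdface,\genface}=\frac{\weight[\stdface][\genface\setminus\stdface]}{k+2}$, so the right-hand side equals $\frac{k+2}{k+1}\cdot\frac{\weight[\stdface][\genface\setminus\stdface]}{k+2}=\frac{\weight[\stdface][\genface\setminus\stdface]}{k+1}=\sparens{\nlupoperator[k]}_{\stdface,\genface}$. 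Third, all remaining pairs: every term is $0$ on both sides. Agreement on all entries gives the claimed operator identity.

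There is no substantive obstacle; the only point requiring a little care is bookkeeping of which branch of the piecewise definitions is active on the diagonal, in particular the remark that $\stdface\cup\genface \in \stdcomplex(k+1)$ forces $\stdface \ne \genface$, so that the diagonal of $\nlupoperator[k]$ arises purely from subtracting the lazy part of the rescaled up-down walk. One can also sidestep the case analysis: write $\upoperator = \frac{1}{k+2}I + \frac{1}{k+2}A$, where $A$ is the off-diagonal part with $\sparens{A}_{\stdface,\genface}=\weight[\stdface][\genface\setminus\stdface]$ when $\stdface\cup\genface\in\stdcomplex(k+1)$ and $0$ otherwise; then $\nlupoperator[k]=\frac{1}{k+1}A=\frac{k+2}{k+1}\parens{\upoperator-\frac{1}{k+2}I}=\frac{k+2}{k+1}\upoperator-\frac{1}{k+1}I$.
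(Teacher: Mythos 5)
Your proposal is correct: the paper states this identity as an observation with no proof, and your entrywise comparison of the two piecewise definitions (or, more cleanly, writing $\upoperator = \frac{1}{k+2}I + \frac{1}{k+2}A$ and $\nlupoperator[k] = \frac{1}{k+1}A$ for the common off-diagonal part $A$) is exactly the routine verification the paper leaves implicit. The one point needing care, that $\stdface \cup \genface \in \stdcomplex(k+1)$ forces $\stdface \ne \genface$ so the diagonal of $\nlupoperator[k]$ vanishes and is recovered on the right-hand side by subtracting the lazy part, is handled correctly in your argument.
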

Of specific interest is the $0$-dimensional non-lazy up-down operator as it can be used to describe \emph{every} one of the $0$-dimensional $i$-up-down operators.
This is because every one of these operators ultimately move between a vertex and its neighbours.
The more steps one takes up the complex the more mixed the result is.
By that we mean that the non-lazy walk operator has a larger effect on the result.
Quantitatively, this can be formulated via the following Lemma.
\begin{lemma}\label{lem:nl-random-walk-and-i-up-operator}
    \[
        \nlupoperator[0] = \frac{i+1}{i} \upoperator[i][0] - \frac{1}{i}I
    \]
\end{lemma}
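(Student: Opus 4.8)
\emph{Proof strategy.} The plan is to establish the equivalent operator identity $\upoperator[i][0] = \frac{1}{i+1} I + \frac{i}{i+1}\nlupoperator[0]$ on $\cochainset{0}{\stdcomplex;\R}$; rearranging it gives $\nlupoperator[0] = \frac{i+1}{i}\upoperator[i][0] - \frac{1}{i}I$. To prove it I would fix a $0$-cochain $\stdcochain$ and a vertex $\genvertex$ and simply unwind $\upoperator[i][0] = \sldoperator^*_0 \cdots \sldoperator^*_{i-1}\,\sldoperator_{i-1}\cdots\sldoperator_0$ using the explicit formulas already in hand. Specialized to a $0$-cochain, Lemma~\ref{lem:multi-sld-operator} gives $\sldoperator_{i-1}\cdots\sldoperator_0\stdcochain(\stdface) = \ev{\genface \in \stdcomplex(0)}{\stdcochain(\genface) \suchthat \genface \subseteq \stdface}$ for $\stdface \in \stdcomplex(i)$, i.e.\ the plain average of $\stdcochain$ over the $i+1$ vertices of $\stdface$; and Lemma~\ref{lem:multi-sld-operator-adj}, applied to that $i$-cochain, shows $\upoperator[i][0]\stdcochain(\genvertex)$ equals the expectation of this average over $i$-faces $\stdface \ni \genvertex$, taken with respect to the weights $\weight[\stdface]$ (the conditional of the weight distribution on $\stdcomplex(i)$ given $\genvertex \in \stdface$).

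Next I would split, inside the average over the vertices of $\stdface$, the term coming from $\genvertex$ itself off from the $i$ terms coming from the other vertices $\genvertex' \in \stdface \setminus \{\genvertex\}$. The $\genvertex$-term contributes exactly $\frac{1}{i+1}\stdcochain(\genvertex)$ for every $\stdface$, producing the lazy part. For the remaining contribution I would swap the order of summation so that $\genvertex'$ ranges over the neighbours of $\genvertex$ in the $1$-skeleton and $\stdface$ ranges over the $i$-faces containing the edge $\{\genvertex,\genvertex'\}$, and then use the standard weight identity $\sum_{\stdface \in \stdcomplex(i),\, \genface' \subseteq \stdface}\weight[\stdface] = \binom{i+1}{m+1}\weight[\genface']$ for an $m$-dimensional face $\genface'$ (the usual consistency of $\weightletter$ across skeletons, which is exactly what makes the link-weight formula a probability distribution). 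Taking $m=1$ collapses the double sum, and after simplifying binomials ($\binom{i+1}{2}/(i+1)^2 = i/(2(i+1))$) the remaining contribution becomes $\frac{i}{i+1}\cdot\frac{1}{2\weight[\genvertex]}\sum_{\genvertex' \sim \genvertex}\weight[\{\genvertex,\genvertex'\}]\stdcochain(\genvertex')$, whose sum equals $\nlupoperator[0]\stdcochain(\genvertex)$ since $\weight[\genvertex][\{\genvertex'\}] = \weight[\{\genvertex,\genvertex'\}]/(2\weight[\genvertex])$. Adding the two pieces gives $\upoperator[i][0]\stdcochain(\genvertex) = \frac{1}{i+1}\stdcochain(\genvertex) + \frac{i}{i+1}\nlupoperator[0]\stdcochain(\genvertex)$, and rearranging yields the claim.

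I do not expect a genuine obstacle here: conceptually the identity merely records that after a single up-step from a vertex the walk can already reach every vertex it will ever reach (by downward closure, two vertices in a common face are already joined by an edge), so performing $i$ up-steps before descending only rescales the probability of staying in place, by the factor $\frac{1}{i+1}$, while the ``non-lazy'' part of the distribution is always the non-lazy $1$-skeleton walk out of $\genvertex$. The only step requiring care is the weight/binomial bookkeeping in the middle paragraph, namely checking that ``pick a $\weight[\stdface]$-random $i$-face through $\genvertex$, then a uniformly random other vertex of it'' really is the non-lazy walk on $\skeleton{\stdcomplex}{1}$. As a sanity check, the case $i=1$ recovers the relation $\nlupoperator[0] = 2\upoperator[0] - I$ from the preceding Observation, since $\upoperator[1][0] = \sldoperator^*_0\sldoperator_0 = \upoperator[0]$.
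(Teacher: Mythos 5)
Your proposal is correct and follows essentially the same route as the paper: unwind $\upoperator[i][0]$ via Lemmas~\ref{lem:multi-sld-operator} and~\ref{lem:multi-sld-operator-adj}, split off the lazy term $\frac{1}{i+1}\stdcochain(\stdvertex)$, swap the order of summation, and use the weight identity $\sum_{\genface \in \stdcomplex(i),\,\stdvertex,\genvertex \in \genface}\weight[\genface] = \binom{i+1}{2}\weight[\stdvertex\genvertex]$ together with $\weight[\stdvertex][\genvertex] = \weight[\stdvertex\genvertex]/(2\weight[\stdvertex])$ to recognize the non-lazy walk with coefficient $\frac{i}{i+1}$. The bookkeeping you flag as the only delicate step is exactly the computation carried out in the paper, and your $i=1$ sanity check against the preceding Observation is consistent.
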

\begin{proof}
    Consider the up operator:
    \begin{align*}
        \upoperator[i][0] \stdcochain (\stdvertex)
        & = \sldoperator^*_{0} \cdots \sldoperator^*_{i-1}\sldoperator_{i-1} \cdots \sldoperator_{0} \stdcochain (\stdvertex)
        = \ev{\genface \in \stdcomplex_{\stdvertex}(i-1)}{\parens{\sldoperator_{i-1} \cdots \sldoperator_{0}\stdcochain}_{\stdvertex}(\genface)}=\\
        & = \ev{\genface \in \stdcomplex_{\stdvertex}(i-1)}{\sldoperator_{i-1} \cdots \sldoperator_{0}\stdcochain(\genface\stdvertex)}
        = \ev{\genface \in \stdcomplex_{\stdvertex}(i-1)}{\ev{\genvertex \in \stdcomplex\parens{0}}{\stdcochain(\genvertex) \suchthat \genvertex \subseteq \genface\stdvertex}}\\
        &= \sum_{\genface \in \stdcomplex_{\stdvertex}(i-1)}{\weight[\stdvertex][\genface]\sum_{\genvertex \in \genface\stdvertex}{\frac{1}{\binom{i+1}{1}}\stdcochain(\genvertex)}}
        = \sum_{\genface \in \stdcomplex_{\stdvertex}(i-1)}{\sum_{\genvertex \in \genface\stdvertex}{\frac{\weight[\stdvertex][\genface]}{i+1}\stdcochain(\genvertex)}}\\
        & = \sum_{\genface \in \stdcomplex_{\stdvertex}(i-1)}{\frac{\weight[\stdvertex][\genface]}{i+1}\stdcochain(\stdvertex)} + \sum_{\genface \in \stdcomplex_{\stdvertex}(i-1)}{\sum_{\genvertex \in \genface}{\frac{\weight[\stdvertex][\genface]}{i+1}\stdcochain(\genvertex)}}\\
        & = \frac{1}{i+1}\stdcochain(\stdvertex) + \sum_{\substack{\genface \in \stdcomplex(i) \\ \stdvertex \in \genface}}{\sum_{\substack{\genvertex \in \stdcomplex(0) \\ \genvertex \ne \stdvertex, \genvertex \in \genface}}{\frac{\weight[\genface]}{(i+1)^2\weight[\stdvertex]}\stdcochain(\genvertex)}}\\
        & = \frac{1}{i+1}\stdcochain(\stdvertex) + \sum_{\genvertex \in \stdcomplex(0)}{\sum_{\substack{\genface \in \stdcomplex(i) \\ \genvertex \ne \stdvertex \\ \genvertex,\stdvertex \in \genface}}{\frac{\weight[\genface]}{(i+1)^2 \weight[\stdvertex]}\stdcochain(\genvertex)}}\\
        & = \frac{1}{i+1}\stdcochain(\stdvertex) + \sum_{\substack{\genvertex \in \stdcomplex(0) \\ \stdvertex\genvertex \in \stdcomplex(1)}}{\sum_{\substack{\genface \in \stdcomplex(i) \\ \genvertex,\stdvertex \in \genface}}{\frac{\weight[\genface]}{(i+1)^2 \weight[\stdvertex]}\stdcochain(\genvertex)}}\\
        & = \frac{1}{i+1}\stdcochain(\stdvertex) + \sum_{\substack{\genvertex \in \stdcomplex(0) \\ \stdvertex\genvertex \in \stdcomplex(1)}}{\frac{1}{(i+1)^2 \weight[\stdvertex]}\stdcochain(\genvertex) \sum_{\substack{\genface \in \stdcomplex(i) \\ \genvertex,\stdvertex \in \genface}}{\weight[\genface]}}\\
        & = \frac{1}{i+1}\stdcochain(\stdvertex) + \sum_{\substack{\genvertex \in \stdcomplex(0) \\ \stdvertex\genvertex \in \stdcomplex(1)}}{\frac{\binom{i+1}{2}\weight[\stdvertex\genvertex]}{(i+1)^2 \weight[\stdvertex]}\stdcochain(\genvertex)}\\
        & = \frac{1}{i+1}\stdcochain(\stdvertex) + \sum_{\substack{\genvertex \in \stdcomplex(0) \\ \stdvertex\genvertex \in \stdcomplex(1)}}{\frac{i\weight[\stdvertex\genvertex]}{2 (i+1) \weight[\stdvertex]}\stdcochain(\genvertex)}\\
        & = \frac{1}{i+1}\stdcochain(\stdvertex) + \frac{i}{i+1}\sum_{\substack{\genvertex \in \stdcomplex(0) \\ \stdvertex\genvertex \in \stdcomplex(1)}}{\weight[\stdvertex][\genvertex]\stdcochain(\genvertex)}\\
        & = \frac{1}{i+1}\stdcochain(\stdvertex) + \frac{i}{i+1}\nlupoperator[0]\stdcochain(\stdvertex)
    \end{align*}
    Therefore:
    \[
        \nlupoperator[0] = \frac{i+1}{i} \upoperator[i][0] - \frac{1}{i}I
    \]
\end{proof}
\section{Analyzing the Non-Lazy Random Walk Operator}\label{sec:analyzing-the-non-lazy-random-walk-operator}
We are now ready to start analyzing the random walk operators.
In order to do so we are going to use a \emph{local to global} argument.
In order to apply a local to global argument we must understand how to view a cochain through the links.
Specifically we will be interested in viewing methods that satisfy the following:
\begin{definition}[Link viewer]
A \emph{link viewer} is any transformation $\viewer$ that accepts a face $\stdface$ and a cochain in $\stdcomplex$.
It then returns a cochain in $\stdcomplex_{\stdface}$.
In addition, a link viewer satisfies the following:
    \begin{itemize}
        \item For every face $\stdface$ it holds that $\viewer[\stdface]$ is linear.
        \item For every face $\stdface$ it holds that $\viewer[\stdface]\ind{} = \ind{}$.
        \item For every two cochains $\stdcochain, \gencochain$ and any two faces of the same dimension $\stdface$,$\genface$ it holds that:
        \[
            \dim\parens{\stdcochain} - \dim\parens{\viewer[\stdface]\stdcochain} = \dim\parens{\gencochain}-\dim\parens{\viewer[\genface]\gencochain}
        \]
        In addition, denote the dimensional difference for vertices by:
        \[
            \dimdiff = \dim\parens{\stdcochain} - \dim\parens{\viewer[\stdvertex]\stdcochain}
        \]
        \item Viewing of a cochain in a link is only determined by the cochain and the face for which it is the link (and not the path taken to achieve said view).
        Formally: For every $\genface \subseteq \stdface \in \stdcomplex$ it holds that:
        \[
            \viewer[\stdface] = \viewer[\stdface \setminus \genface]\viewer[\genface]
        \]
        \item For every dimension $i$ it holds that:
        \[
            \innerprod{\stdcochain, \gencochain} = \ev{\stdface \in \stdcomplex(i)}{\innerprod{\viewer[\stdface][\stdcochain], \viewer[\stdface][\gencochain]}}
        \]
    \end{itemize}
\end{definition}
Of particular interest are link viewers that view the non-lazy random walk operator in ``the right way'':
\begin{definition}
    A link viewer respects the non-lazy up-down random walk if for every $k$:
    \[
        \innerprod{\nlupoperator[k]\stdcochain, \stdcochain} = \ev{\stdvertex \in \stdcomplex(0)}{\innerprod{\nlupoperator[k-\dimdiff]\viewer[\stdvertex][\stdcochain], \viewer[\stdvertex][\stdcochain]}}
    \]
\end{definition}

\begin{definition}[$i$-level cochain]\label{def:i-level-cochain}
    A cochain $\stdcochain$ is an $i$-level cochain with respect to $\viewer$ if it holds that:
    \[
        \forall \stdface \in \stdcomplex(i-1): \innerprod{\viewer[\stdface][\stdcochain], \ind{}} = 0
    \]
    When the link viewer is clear from context we will simply refer to them as $i$-level cochains.
    Denote the set of $i$-level $j$-dimensional cochains in $\stdcomplex$ by $\levelcochainset{\viewer, i}{j}{\stdcomplex;\R}$.
\end{definition}
\begin{lemma}\label{lem:containment-of-i-level-cochains}
    For every dimension $j \le i$:
    \[
        \levelcochainset{\viewer, i}{k}{\stdcomplex;\R} \subseteq \levelcochainset{\viewer, j}{k}{\stdcomplex;\R}
    \]
\end{lemma}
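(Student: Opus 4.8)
The plan is to establish the inclusion one dimension at a time: it suffices to prove $\levelcochainset{\viewer, i}{k}{\stdcomplex;\R} \subseteq \levelcochainset{\viewer, i-1}{k}{\stdcomplex;\R}$ for every $i$, since the general statement for $j \le i$ then follows by composing the resulting chain of inclusions down from dimension $i$ to dimension $j$. So fix an $i$-level cochain $\stdcochain$ and an arbitrary face $\genface \in \stdcomplex(i-2)$; the goal is to verify the defining condition of an $(i-1)$-level cochain, namely $\innerprod{\viewer[\genface][\stdcochain], \ind{}} = 0$. The idea is to pass to the link $\stdcomplex_{\genface}$ and expand this single inner product as an average of local inner products taken over the vertices of that link.

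The steps, in order, are as follows. First, apply the inner-product clause of the link-viewer axioms inside $\stdcomplex_{\genface}$ at dimension $0$, to the cochain $\viewer[\genface][\stdcochain]$; this writes $\innerprod{\viewer[\genface][\stdcochain], \ind{}}$ as the average over $\stdvertex \in \stdcomplex_{\genface}(0)$ of the local inner product of the $\stdvertex$-localization of $\viewer[\genface][\stdcochain]$ with $\viewer[\stdvertex][\ind{}]$. Second, simplify each summand: by the composition clause $\viewer[\stdface] = \viewer[\stdface \setminus \genface]\viewer[\genface]$ taken with $\stdface = \stdvertex \cup \genface$, the $\stdvertex$-localization of $\viewer[\genface][\stdcochain]$ equals $\viewer[\stdvertex \cup \genface][\stdcochain]$, and by preservation of constants $\viewer[\stdvertex][\ind{}] = \ind{}$, so the average becomes $\ev{\stdvertex \in \stdcomplex_{\genface}(0)}{\innerprod{\viewer[\stdvertex \cup \genface][\stdcochain], \ind{}}}$. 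Third, observe that for each vertex $\stdvertex$ of the link the face $\stdvertex \cup \genface$ has dimension $(i-2)+1 = i-1$, hence belongs to $\stdcomplex(i-1)$; since $\stdcochain$ is an $i$-level cochain, every summand $\innerprod{\viewer[\stdvertex \cup \genface][\stdcochain], \ind{}}$ vanishes, and therefore so does the average. As $\genface$ was arbitrary in $\stdcomplex(i-2)$, this proves $\stdcochain \in \levelcochainset{\viewer, i-1}{k}{\stdcomplex;\R}$, and iterating yields the claim for every $j \le i$.

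The only point calling for comment — not really an obstacle — is that the link-viewer axioms are being invoked ``inside'' the link $\stdcomplex_{\genface}$: one uses that the restriction of $\viewer$ to $\stdcomplex_{\genface}$ is again a link viewer of $\stdcomplex_{\genface}$, so that the inner-product identity, the composition rule, and $\viewer[\cdot]\ind{} = \ind{}$ are all available there. This is already implicit in the statement of the composition clause, whose right-hand side applies $\viewer$ to faces of and cochains on links, so I would at most record it as a one-line observation. The single genuinely mathematical fact buried in the argument is that a cochain whose localization to the link of every vertex is orthogonal to the constant function is itself orthogonal to the constant function, which is exactly the dimension-$0$ instance of the inner-product axiom. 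Everything else is bookkeeping: tracking dimensions so that $\genface \in \stdcomplex(i-2)$ forces $\stdvertex \cup \genface \in \stdcomplex(i-1)$, and noting that $\stdcomplex_{\genface}$ does have vertices whenever $i-1 \le \dimension{\stdcomplex}$, which holds by purity throughout the range where the level spaces are defined.
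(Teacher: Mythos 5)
Your proof is correct and is essentially the paper's argument: both expand $\innerprod{\viewer[\stdface][\stdcochain],\ind{}}$ via the inner-product axiom applied inside the link, then use the composition rule $\viewer[\genface]\viewer[\stdface]=\viewer[\genface\cup\stdface]$ and $\viewer[\cdot]\ind{}=\ind{}$ so that each term is governed by the $i$-level condition on a face of dimension $i-1$. The only cosmetic difference is that you descend one level at a time (averaging over vertices of the link and iterating) while the paper goes from level $i$ to level $j$ in a single averaging step; your explicit remark that the viewer axioms must be invoked inside the link is a point the paper uses silently.
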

\begin{proof}
    Let $\stdcochain \in \levelcochainset{\viewer, i}{k}{\stdcomplex;\R}$ and let $\stdface$ be a $(j-1)$-dimensional face and note the following:
    \begin{align*}
        \innerprod{\viewer[\stdface][\stdcochain], \ind{}}
        & = \ev{\genface \in \stdcomplex_{\stdface}(i-j)}{\innerprod{\viewer[\genface][\viewer[\stdface][\stdcochain]], \viewer[\genface][\ind{}]}}
        = \ev{\genface \in \stdcomplex_{\stdface}(i-j)}{\innerprod{\viewer[\genface\stdface][\stdcochain], \ind{}}}
        = 0
    \end{align*}
    Where the last equality is due to the fact that $\stdface \cup \genface$ is an $(i-1)$-dimensional face.
\end{proof}
Of particular interest are $i$-level cochains that are orthogonal to the $(i+1)$-level cochains.
We will therefore define the following:
\begin{definition}[Proper $i$-level cochain]
    A cochain $\stdcochain$ is a proper $i$-level cochain with respect to $\viewer$ if it holds that:
    \[
        \stdcochain \in \levelcochainset{\viewer, i}{j}{\stdcomplex;\R} \cap \parens{\levelcochainset{\viewer, i+1}{j}{\stdcomplex;\R}}^{\bot}
    \]
    When the link viewer is clear from context we will simply refer to them as proper $i$-level cochains.
    Denote the set of proper $i$-level $j$-dimensional cochains in $\stdcomplex$ by $\levelcochainset{\viewer, \hat{i}}{j}{\stdcomplex;\R}$.
\end{definition}
Consider the following key property of proper level cochains:
\begin{lemma}\label{lem:orthogonality-of-proper-level-cochains}
    Let $i < j$ and let $\stdcochain$ be a proper $i$-level cochain and $\gencochain$ be a proper $j$-level cochain then:
    \[
        \innerprod{\stdcochain, \gencochain} = 0
    \]
\end{lemma}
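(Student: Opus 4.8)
The plan is to derive this almost immediately from the containment statement for level cochains (Lemma~\ref{lem:containment-of-i-level-cochains}) together with the definition of a \emph{proper} level cochain. The whole point is an off‑by‑one observation: a proper $i$-level cochain is only guaranteed orthogonal to the \emph{next} level $i+1$, not to level $i$ itself, and since the hypothesis is the strict inequality $i<j$ we have $i+1\le j$, so the entire space of $j$-level cochains is contained in the space of $(i+1)$-level cochains — which is exactly the space $\stdcochain$ is declared orthogonal to.

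Concretely, first I would fix the common dimension: both $\stdcochain$ and $\gencochain$ are $k$-dimensional cochains for some $k$ (otherwise $\innerprod{\stdcochain,\gencochain}$ is not defined). Since $\gencochain$ is a proper $j$-level cochain it is in particular a $j$-level cochain, i.e.\ $\gencochain \in \levelcochainset{\viewer, j}{k}{\stdcomplex;\R}$. Applying Lemma~\ref{lem:containment-of-i-level-cochains} to the pair of levels $i+1 \le j$ gives $\levelcochainset{\viewer, j}{k}{\stdcomplex;\R} \subseteq \levelcochainset{\viewer, i+1}{k}{\stdcomplex;\R}$, hence $\gencochain \in \levelcochainset{\viewer, i+1}{k}{\stdcomplex;\R}$. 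On the other hand, $\stdcochain$ being a proper $i$-level cochain means precisely that $\stdcochain \in \levelcochainset{\viewer, i}{k}{\stdcomplex;\R} \cap \per{\levelcochainset{\viewer, i+1}{k}{\stdcomplex;\R}}$, so $\stdcochain$ is orthogonal to every element of $\levelcochainset{\viewer, i+1}{k}{\stdcomplex;\R}$, and in particular $\innerprod{\stdcochain, \gencochain} = 0$.

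I do not expect any real obstacle here; the only subtlety is the index bookkeeping — making sure one uses the strict inequality $i<j$ (equivalently $i+1\le j$) so that Lemma~\ref{lem:containment-of-i-level-cochains} applies with the correct pair of levels — and everything else is unwinding definitions.
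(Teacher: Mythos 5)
Your proposal is correct and is essentially the paper's own argument: the paper likewise invokes Lemma~\ref{lem:containment-of-i-level-cochains} to conclude that $\gencochain$, being a $j$-level cochain with $j \ge i+1$, lies in $\levelcochainset{\viewer, i+1}{k}{\stdcomplex;\R}$, and then uses the definition of a proper $i$-level cochain to get orthogonality. Your write-up just makes the index bookkeeping (and the typo-free statement that $\stdcochain$, not $\gencochain$, supplies the orthogonality) more explicit.
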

\begin{proof}
    Due to Lemma~\ref{lem:containment-of-i-level-cochains} it holds that $\gencochain$ is also a $i+1$ level cochain and thus is orthogonal to $\gencochain$ by definition.
\end{proof}
Note that considering proper level cochains is one of the key ingredients of this paper.
In previous results (For example~\cite{DBLP:conf/approx/KaufmanO18}) instead of considering proper $i$-level cochains the authors essentially considered cochains in $\parens{\levelcochainset{\viewer,i}{k}{\stdcomplex;\R}}^{\bot}$.
Using proper level cochains allows us to separate the different levels completely and achieve a proper decomposition.
Since analyzing the decomposition with only pure level cochains in mind is hard (as even if $\stdcochain$ is a level cochain the same cannot be said about $\viewer[\stdface][\stdcochain]$) we resort to first show the following technical theorem.
\begin{theorem}[Bootstrapping Theorem, Technical Version]\label{thm:main-technichal}
    Let $\stdcomplex$ be a pure $d$-dimensional simplicial complex, $\viewer$ be a link viewer that respects the non-lazy up-down random walk and for every $i$ let $\stdcochain_i \in \levelcochainset{\viewer, i}{k}{\stdcomplex;\R}$ and $\stdcochain = \sum_{i=0}^p{\stdcochain_i}$.
    Denote $r = k-\dimdiff$ and suppose that there are values of $\set{\lambda_{\stdface,i,j}}_{\stdface \in \stdcomplex, i \in \sparens{d}, j \in \sparens{d}}$ ($\lambda_{\stdface,i,j}$ is the contraction of an $i$-level $j$-dimensional cochain in the link of $\stdface$) such that for every $\stdface \in \stdcomplex$ and $\gencochain_0$ a $0$-level cochain:
    \[
    \begin{cases}
        \lambda_{\stdface, 1, k}\norm{\gencochain_0}^2 + \ev{\stdvertex \in \stdcomplex_{\stdface}(0)}{(1-\lambda_{\stdface\cup\set{\stdvertex}, 1, r})\norm{\downoperator[r][r]\viewer[\stdvertex][\gencochain_0]}^2} \le \lambda_{\stdface, 0, k} \norm{\gencochain_0}^2\\
         \max_{\stdvertex \in \stdcomplex_{\stdface}(0)}{\set{\lambda_{\stdface \cup \set{\stdvertex}, i-1, r}}} \le \lambda_{\stdface, i, k}
    \end{cases}
    \]
    Then:
    \[
        \innerprod{\nlupoperator[k]\stdcochain, \stdcochain} \le \sum_{i=0}^p{\lambda_{\emptyset, i, k}\norm{\stdcochain_i}^2} +\sum_{i = 0}^p{\sum_{\substack{j=1\\i < j}}^p{c_{i,j}\innerprod{\stdcochain_i, \stdcochain_j}}}
    \]

    For some constants $\set{c_{i,j}}$ and where $p$ is the number of level functions that span the space orthogonal to the constants.
\end{theorem}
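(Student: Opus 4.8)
The plan is to prove Theorem~\ref{thm:main-technichal} by induction on the dimension $k$, using the fact that $\viewer$ respects the non-lazy up-down random walk. First I would set up the induction: for the base case, when $k - \dimdiff \le 0$ (so that taking one step into a link drops us below dimension $0$), the statement should follow essentially from the local hypothesis on $0$-level cochains. For the inductive step, I would write
\[
    \innerprod{\nlupoperator[k]\stdcochain, \stdcochain}
    = \ev{\stdvertex \in \stdcomplex(0)}{\innerprod{\nlupoperator[r]\viewer[\stdvertex][\stdcochain], \viewer[\stdvertex][\stdcochain]}},
\]
where $r = k - \dimdiff$, and then analyze the localized cochain $\viewer[\stdvertex][\stdcochain] = \sum_{i=0}^p \viewer[\stdvertex][\stdcochain_i]$ inside the link $\stdcomplex_{\stdvertex}$. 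The crucial structural point, as explained in the proof layout, is that for $i \ge 1$ the cochain $\viewer[\stdvertex][\stdcochain_i]$ is an $(i-1)$-level cochain in the link, while $\viewer[\stdvertex][\stdcochain_0]$ is \emph{not} a $0$-level cochain. So I would split off its constant part, writing $\viewer[\stdvertex][\stdcochain_0] = (\viewer[\stdvertex][\stdcochain_0] - \ev{}{\viewer[\stdvertex][\stdcochain_0]}) + \ev{}{\viewer[\stdvertex][\stdcochain_0]}$, so that the non-constant remainder together with $\sum_{i \ge 1} \viewer[\stdvertex][\stdcochain_i]$ forms a cochain orthogonal to the constants in $\stdcomplex_{\stdvertex}$, to which the induction hypothesis applies (with the level-index shifted down by one, matching the second line of the recursive condition $\max_{\stdvertex}\lambda_{\stdface\cup\{\stdvertex\}, i-1, r} \le \lambda_{\stdface, i, k}$).

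The second main step is to handle the contribution of the constant pieces $\ev{}{\viewer[\stdvertex][\stdcochain_0]}$ that were peeled off in each link, and this is where the first line of the recursive hypothesis enters. Here I would invoke the connection between $\ev{}{\locviewer[\stdvertex][\stdcochain_0]}$ and $\norm{\downoperator[r][r]\viewer[\stdvertex][\gencochain_0]}$ type quantities --- the appearance of $\downoperator[r][r]$ in the hypothesis is precisely the algebraic encoding of "the expected value of the localized $0$-level cochain." I would expand $\innerprod{\nlupoperator[r]\viewer[\stdvertex][\stdcochain], \viewer[\stdvertex][\stdcochain]}$ using bilinearity, grouping it into (a) the part coming from the induction hypothesis applied to the orthogonal-to-constants piece, which contributes $\sum_i \lambda_{\{\stdvertex\}, i-1, r}\norm{\cdot}^2$ plus cross terms $\innerprod{\cdot,\cdot}$; (b) cross terms between the constant remainder and the rest; and (c) the pure constant-remainder term. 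Taking the expectation over $\stdvertex \in \stdcomplex(0)$ and then applying the two lines of the recursive condition at $\stdface = \emptyset$ should collapse these into $\sum_i \lambda_{\emptyset, i, k}\norm{\stdcochain_i}^2$ plus a sum of inner-product cross terms $\sum_{i<j} c_{i,j}\innerprod{\stdcochain_i,\stdcochain_j}$, using that $\ev{\stdvertex}{\norm{\viewer[\stdvertex][\stdcochain_i]}^2} = \norm{\stdcochain_i}^2$ and $\ev{\stdvertex}{\innerprod{\viewer[\stdvertex][\stdcochain_i],\viewer[\stdvertex][\stdcochain_j]}} = \innerprod{\stdcochain_i,\stdcochain_j}$ from the link-viewer axioms.

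I expect the main obstacle to be \emph{bookkeeping the cross terms correctly across the recursion}: when the induction hypothesis is applied inside each link it produces its own cross terms $c'_{i,j}\innerprod{\viewer[\stdvertex][\stdcochain_i], \viewer[\stdvertex][\stdcochain_j]}$, and these must be combined with the new cross terms generated by pulling off the constant part of $\viewer[\stdvertex][\stdcochain_0]$ (which interacts with every $\viewer[\stdvertex][\stdcochain_j]$ for $j \ge 1$), and one must verify that after taking $\ev{\stdvertex}{\cdot}$ everything reassembles into a clean expression in the \emph{global} inner products $\innerprod{\stdcochain_i,\stdcochain_j}$ with $i<j$ --- in particular that no $\norm{\cdot}^2$ terms leak into the cross-term sum and that the index shift $i \mapsto i-1$ in the links lines up with the global indexing. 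A secondary subtlety is the treatment of $\stdcochain_0$ itself: it is only assumed to be a ($0$-level, not necessarily proper) cochain, and the down-operator quantity $\norm{\downoperator[r][r]\viewer[\stdvertex][\gencochain_0]}^2$ must be identified with the square of the constant part $\ev{}{\viewer[\stdvertex][\stdcochain_0]}$ times the appropriate weight, which I would justify via Lemmas~\ref{lem:multi-sld-operator} and~\ref{lem:multi-sld-operator-adj} together with the expression $\nlupoperator[0] = \frac{i+1}{i}\upoperator[i][0] - \frac{1}{i}I$ from Lemma~\ref{lem:nl-random-walk-and-i-up-operator}. Once the recursion's two inequalities are plugged in at each level, the telescoping should close out the bound.
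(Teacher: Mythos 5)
Your proposal matches the paper's own argument: the paper likewise localizes via the link-viewer identity, peels the constant component off $\viewer[\stdvertex][\stdcochain_0]$ (written there as $\downoperator[r][r]\viewer[\stdvertex][\stdcochain_0]$ versus $\parens{I-\downoperator[r][r]}\viewer[\stdvertex][\stdcochain_0]$, which is exactly your mean/mean-zero split), applies the theorem recursively in each link with the level index shifted by one (second recursive condition), absorbs the constant remainder via the first recursive condition and a Pythagoras identity, and sweeps the cross terms into the $c_{i,j}\innerprod{\stdcochain_i,\stdcochain_j}$ sum (its Lemma~\ref{lem:dealing-with-mixed-terms}). The bookkeeping you flag as the main obstacle is precisely what that lemma handles, so the plan is sound and essentially identical to the paper's proof.
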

\begin{proof}
    Let $r$ be the dimension of $\viewer[\stdface][\stdcochain]$.
    Consider the following:
    \begin{equation}\label{eq:main-thm-expected-value}
    \begin{aligned}
        \innerprod{\nlupoperator[k]\stdcochain, \stdcochain}
        & = \innerprod{\nlupoperator[k]\sum_{i=0}^p{\stdcochain_i}, \sum_{i=0}^p{\stdcochain_i}}
        = \ev{\stdvertex \in \stdcomplex(0)}{\innerprod{\viewer[\stdvertex][\parens{\nlupoperator[r]\sum_{i=0}^p{\stdcochain_i}}], \viewer[\stdvertex][\parens{\sum_{i=0}^p{\stdcochain_i}}]}}\\
        & = \ev{\stdvertex \in \stdcomplex(0)}{\innerprod{\nlupoperator[r]\sum_{i=0}^p{\viewer[\stdvertex][\stdcochain_i]}, \sum_{i=0}^p{\viewer[\stdvertex][\stdcochain_i]}}}\\
        & = \ev{\stdvertex \in \stdcomplex(0)}{\innerprod{\nlupoperator[r]\parens{I-\downoperator[r][r]}\sum_{i=0}^p{\viewer[\stdvertex][\stdcochain_i]}, \parens{I-\downoperator[r][r]}\sum_{i=0}^p{\viewer[\stdvertex][\stdcochain_i]}}} + \\
        & \qquad\qquad + \ev{\stdvertex \in \stdcomplex(0)}{\innerprod{\nlupoperator[r]\downoperator[r][r]\sum_{i=0}^p{\viewer[\stdvertex][\stdcochain_i]}, \downoperator[r][r]\sum_{i=0}^p{\viewer[\stdvertex][\stdcochain_i]}}}\\
        & = \ev{\stdvertex \in \stdcomplex(0)}{\innerprod{\nlupoperator[r]\parens{I-\downoperator[r][r]}\sum_{i=0}^p{\viewer[\stdvertex][\stdcochain_i]}, \parens{I-\downoperator[r][r]}\sum_{i=0}^p{\viewer[\stdvertex][\stdcochain_i]}}} + \\ &\qquad \qquad + \ev{\stdvertex \in \stdcomplex(0)}{\norm{\downoperator[r][r]\viewer[\stdvertex][\stdcochain_0]}^2}\\
    \end{aligned}
    \end{equation}
    We will now like to apply our Theorem to the localized cochains in the links.
    For that, consider the following level cochains:
    \begin{center}
    \begin{tabular}{c|c|c}
         level & cochain & square of norm \\
         \hline
         $k-1$   & $\stdcochain^{\stdvertex}_{k-1}:=\viewer[\stdvertex][\stdcochain_k]$ & $\norm{\viewer[\stdvertex][\stdcochain_k]}^2$\\
         $\vdots$ & $\vdots$ & $\vdots$\\
         $1$ & $\stdcochain^{\stdvertex}_{1}:=\viewer[\stdvertex][\stdcochain_2]$ & $\norm{\viewer[\stdvertex][\stdcochain_2]}^2$\\
         $0$ & $\stdcochain^{\stdvertex}_{0}:=\viewer[\stdvertex][\stdcochain_1] + \parens{I-\downoperator[r][r]}\viewer[\stdvertex][\stdcochain_0]$ & $\norm{\viewer[\stdvertex][\stdcochain_1]}^2 + \norm{\parens{I-\downoperator[r][r]}\viewer[\stdvertex][\stdcochain_0]}^2 + 2 \innerprod{\viewer[\stdvertex][\stdcochain_1], \viewer[\stdvertex][\stdcochain_0]}$
    \end{tabular}
    \end{center}
    Note that, by definition, for every $i \ge 2$ it holds that $\viewer[\stdvertex][\stdcochain_i] \in \levelcochainset{\viewer,i-1}{r}{\stdcomplex_{\stdvertex};\R}$.
    In addition $\viewer[\stdvertex][\stdcochain_1]+\parens{I-\downoperator[r][r]}\viewer[\stdvertex][\stdcochain_0] \in \levelcochainset{\viewer, 0}{r}{\stdcomplex_{\stdvertex};\R}$.
    We can therefore apply the Theorem to every link which (after some manipulation of the mixed terms that, for completion, is presented in Lemma~\ref{lem:dealing-with-mixed-terms}) yields that, for every link $\stdvertex$, it holds that:
    \begin{multline*}
        \innerprod{\nlupoperator[\stdvertex, k]\parens{I-\downoperator[r][r]}\viewer[\stdvertex][\stdcochain], \parens{I-\downoperator[r][r]}\viewer[\stdvertex][\stdcochain]} 
        \le \sum_{i=0}^{k-1}{\lambda_{\stdvertex, i, r}\norm{\stdcochain^{\stdvertex}_i}}^2 + \sum_{i = 0}^p{\sum_{\substack{j=1\\i < j}}^p{c_{i,j}\innerprod{\stdcochain^{\stdvertex}_i, \stdcochain^{\stdvertex}_j}}}=\\
        = \sum_{i=0}^{k-1}{\lambda_{\stdvertex, i, r}\norm{\viewer[\stdvertex][\stdcochain_i]}}^2 + \lambda_{\stdvertex, 0, r}\norm{\parens{I-\downoperator[r][r]}\viewer[\stdvertex][\stdcochain_0]}^2 + \sum_{i = 0}^p{\sum_{\substack{j=1\\i < j}}^p{c'_{i,j}\innerprod{\viewer[\stdvertex][\stdcochain_i], \viewer[\stdvertex][\stdcochain_j]}}}
    \end{multline*}
    Combining this with~\ref{eq:main-thm-expected-value} and noting that $\lambda_{\emptyset, i, k} \ge \max_{\stdvertex \in \stdcomplex(0)}{\set{\lambda_{\stdvertex, i-1, r}}}$ yields that:
    \begin{multline*}
        \innerprod{\nlupoperator[k]\stdcochain, \stdcochain} \le
        \sum_{i=1}^{k}{\lambda_{\emptyset, i, k}\norm{\stdcochain_i}}^2 + \ev{\stdvertex \in \stdcomplex(0)}{\norm{\downoperator[r][r]\viewer[\stdvertex][\stdcochain_0]}^2 + \lambda_{\stdvertex, 0, r}\norm{\parens{I-\downoperator[r][r]}\viewer[\stdvertex][\stdcochain_0]}^2} + \\ + \sum_{i = 0}^p{\sum_{\substack{j=1\\i < j}}^p{c'_{i,j}\innerprod{\stdcochain_i, \stdcochain_j}}}
    \end{multline*}
    Therefore all we have to do is prove that:
    \[
        \ev{\stdvertex \in \stdcomplex(0)}{\norm{\downoperator[r][r]\viewer[\stdvertex][\stdcochain_0]}^2 + \lambda_{\stdvertex, 0, r}\norm{\parens{I-\downoperator[r][r]}\viewer[\stdvertex][\stdcochain_0]}^2} \le
        \lambda_{\emptyset, 0, k} \norm{\stdcochain_0}^2
    \]
    This follows directly from our choice of $\lambda$s:
    \begin{multline*}
        \ev{\stdvertex \in \stdcomplex(0)}{\norm{\downoperator[r][r]\viewer[\stdvertex][\stdcochain_0]}^2 + \lambda_{\stdvertex, 0, r}\norm{\parens{I-\downoperator[r][r]}\viewer[\stdvertex][\stdcochain_0]}^2}=\\
        = \ev{\stdvertex \in \stdcomplex(0)}{\parens{1-\lambda_{\stdvertex, 0, r}}\norm{\downoperator[r][r]\viewer[\stdvertex][\stdcochain_0]}^2 + \lambda_{\stdvertex, 0, r}\parens{\norm{\downoperator[r][r]\viewer[\stdvertex][\stdcochain_0]}^2 + \norm{\parens{I-\downoperator[r][r]}\viewer[\stdvertex][\stdcochain_0]}^2}}=\\
        = \ev{\stdvertex \in \stdcomplex(0)}{\parens{1-\lambda_{\stdvertex, 0, r}}\norm{\downoperator[r][r]\viewer[\stdvertex][\stdcochain_0]}^2 + \lambda_{\stdvertex, 0, r}\norm{\viewer[\stdvertex][\stdcochain_0]}^2} \le \\
        \le \ev{\stdvertex \in \stdcomplex(0)}{\parens{1-\lambda_{\stdvertex, 0, r}}\norm{\downoperator[r][r]\viewer[\stdvertex][\stdcochain_0]}^2} + \lambda_{\emptyset, 1, k}\norm{\stdcochain_0}^2 \le \lambda_{\emptyset, 0, k} \norm{\stdcochain_0}^2
    \end{multline*}
\end{proof}
\begin{theorem}[Bootstrapping Theorem]\label{thm:main}
    Let $\stdcomplex$ be a pure $d$-dimensional simplicial complex, $\viewer$ be a link viewer that respects the non-lazy up-down random walk and for every $i$ let $\stdcochain_i \in \levelcochainset{\viewer,\hat{i}}{k}{\stdcomplex;\R}$ be a proper $i$-level cochain and $\stdcochain = \sum_{i=0}^p{\stdcochain_i}$.
    Denote $r = k-\dimdiff$ and suppose that there are values of $\set{\lambda_{\stdface,i,j}}_{\stdface \in \stdcomplex, i \in \sparens{d}, j \in \sparens{d}}$ such that for every $\stdface \in \stdcomplex$ and $\gencochain_0$ a $0$-level cochain:
    \[
    \begin{cases}
        \lambda_{\stdface, 1, k}\norm{\gencochain_0}^2 + \ev{\stdvertex \in \stdcomplex_{\stdface}(0)}{(1-\lambda_{\stdface\cup\set{\stdvertex}, 1, r})\norm{\downoperator[r][r]\viewer[\stdvertex][\gencochain_0]}^2} \le \lambda_{\stdface, 0, k} \norm{\gencochain_0}^2\\
         \max_{\stdvertex \in \stdcomplex_{\stdface}(0)}{\set{\lambda_{\stdface \cup \set{\stdvertex}, i-1, r}}} \le \lambda_{\stdface, i, k}
    \end{cases}
    \]
    Then:
    \[
        \innerprod{\nlupoperator[k]\stdcochain, \stdcochain} \le \sum_{i=0}^p{\lambda_{\emptyset, i, k}\norm{\stdcochain_i}^2}
    \]
\end{theorem}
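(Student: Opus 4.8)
The plan is to obtain Theorem~\ref{thm:main} as an essentially immediate consequence of its technical counterpart, Theorem~\ref{thm:main-technichal}, by cashing in the extra structure carried by \emph{proper} level cochains. First I would check that the hypotheses line up: a proper $i$-level cochain lies in $\levelcochainset{\viewer, i}{k}{\stdcomplex;\R} \cap \parens{\levelcochainset{\viewer, i+1}{k}{\stdcomplex;\R}}^{\bot}$ and in particular in $\levelcochainset{\viewer, i}{k}{\stdcomplex;\R}$, so the family $\stdcochain_0,\dots,\stdcochain_p$ of Theorem~\ref{thm:main} is a legitimate input to Theorem~\ref{thm:main-technichal}, the decomposition $\stdcochain = \sum_{i=0}^p \stdcochain_i$ is unchanged, the quantity $p$ (number of level cochains spanning the orthogonal complement of the constants) means the same thing in both statements, and the recursive conditions on $\set{\lambda_{\stdface, i, j}}$ are literally identical. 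Applying Theorem~\ref{thm:main-technichal} therefore yields
\[
\innerprod{\nlupoperator[k]\stdcochain, \stdcochain} \le \sum_{i=0}^p{\lambda_{\emptyset, i, k}\norm{\stdcochain_i}^2} + \sum_{i = 0}^p{\sum_{\substack{j=1\\i < j}}^p{c_{i,j}\innerprod{\stdcochain_i, \stdcochain_j}}}
\]
for the same constants $\set{c_{i,j}}$.

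The only remaining step is to kill the double sum of mixed terms. Since each $\stdcochain_i$ is a proper $i$-level cochain, for any pair $i < j$ Lemma~\ref{lem:orthogonality-of-proper-level-cochains} (together with Lemma~\ref{lem:containment-of-i-level-cochains}, which makes a proper $j$-level cochain also an $(i+1)$-level cochain, so that the orthogonality applies in exactly the needed direction) gives $\innerprod{\stdcochain_i, \stdcochain_j} = 0$. Hence every summand of $\sum_{i}\sum_{j>i} c_{i,j}\innerprod{\stdcochain_i, \stdcochain_j}$ vanishes, and the bound collapses to $\innerprod{\nlupoperator[k]\stdcochain, \stdcochain} \le \sum_{i=0}^p \lambda_{\emptyset, i, k}\norm{\stdcochain_i}^2$, which is the claim.

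I do not expect a genuine obstacle at this stage: all the real work — the link-by-link bootstrapping recursion, the split of $\viewer[\stdvertex][\stdcochain_0]$ into a $0$-level part and the remainder $\ev{}{\viewer[\stdvertex][\stdcochain_0]}$, and the final $\downoperator[r][r]$-computation that closes the recursion with the chosen $\lambda$s — already lives inside the proof of Theorem~\ref{thm:main-technichal}. The passage to proper level cochains is precisely the packaging step designed so that the previously uncontrolled inner products $\innerprod{\stdcochain_i, \stdcochain_j}$ disappear. The only things worth double-checking are purely bookkeeping: that the index set of the double sum in Theorem~\ref{thm:main-technichal} really consists of strict pairs $i<j$, and that the constants $c_{i,j}$ produced there do not secretly depend on $\stdcochain$ in a way that would be affected by restricting to proper level cochains — neither of which should cause trouble.
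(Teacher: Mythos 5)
Your proposal is correct and matches the paper's own proof of Theorem~\ref{thm:main}: the paper likewise applies Theorem~\ref{thm:main-technichal} to the proper level cochains (which are in particular $i$-level cochains) and then invokes Lemma~\ref{lem:orthogonality-of-proper-level-cochains} to make every mixed term $\innerprod{\stdcochain_i,\stdcochain_j}$ vanish. Your extra bookkeeping remarks (the strict pairs $i<j$ and the $\stdcochain$-independence of the constants $c_{i,j}$) are consistent with how the technical theorem is stated, so no further work is needed.
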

\begin{proof}
    Note that the difference between this Theorem and Theorem~\ref{thm:main-technichal} is the choice of $\stdcochain_i$s.
    Specifically, in this Theorem the the cochains $\stdcochain_i$ are chosen to be \emph{proper} $i$-level cochains.
    Therefore, due to Lemma~\ref{lem:orthogonality-of-proper-level-cochains}, they are orthogonal to each other.
    This allows us to apply Theorem~\ref{thm:main-technichal} and note that:
    \[
        \innerprod{\nlupoperator[k]\stdcochain, \stdcochain} \le \sum_{i=0}^p{\lambda_{\emptyset, i, k}\norm{\stdcochain_i}^2} +\sum_{i = 0}^p{\sum_{\substack{j=1\\i < j}}^p{{c''}_{i,j}\innerprod{\stdcochain_i, \stdcochain_j}}} = \sum_{i=0}^p{\lambda_{\emptyset, i, k}\norm{\stdcochain_i}^2}
    \]
\end{proof}
It is important to note that, unlike the decomposition known in the two-sided case, this decomposition is not a decomposition to approximate eigenfunctions.
When applying the walk operator to a level function the result might be spread over multiple levels.
Theorem~\ref{thm:main} also yields a decomposition to the up-down operator:
\begin{corollary}
    With the same assumptions as Theorem~\ref{thm:main} it holds that
    \[
        \innerprod{\upoperator[k]\stdcochain, \stdcochain} \le \sum_{i=0}^p{\parens{\frac{k+1}{k+2}\lambda_{\emptyset, i, k} - \frac{1}{k+1}}\norm{\stdcochain_i}^2}
    \]
\end{corollary}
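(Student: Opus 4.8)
The plan is to obtain the corollary as a purely algebraic consequence of Theorem~\ref{thm:main}, since $\upoperator[k]$ and $\nlupoperator[k]$ differ only by an affine rescaling. I would start from the identity $\nlupoperator[k] = \frac{k+2}{k+1}\upoperator[k] - \frac{1}{k+1}I$ recorded in Section~\ref{sec:the-non-lazy-walk-operator} and solve it for $\upoperator[k]$: multiplying through by $\frac{k+1}{k+2}$ and rearranging gives $\upoperator[k] = \frac{k+1}{k+2}\nlupoperator[k] + \frac{1}{k+2}I$, the identity term picking up the factor $\frac{k+1}{k+2}$ and becoming $\frac{1}{k+2} = \frac{k+1}{k+2}\cdot\frac{1}{k+1}$. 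Pairing with $\stdcochain$ and using bilinearity of the inner product then yields
\[
    \innerprod{\upoperator[k]\stdcochain, \stdcochain} = \frac{k+1}{k+2}\innerprod{\nlupoperator[k]\stdcochain, \stdcochain} + \frac{1}{k+2}\norm{\stdcochain}^2 .
\]

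I would then bound each term on the right. The hypotheses here are by assumption exactly those of Theorem~\ref{thm:main}, so that theorem applies and gives $\innerprod{\nlupoperator[k]\stdcochain, \stdcochain} \le \sum_{i=0}^p \lambda_{\emptyset, i, k}\norm{\stdcochain_i}^2$. For the second term I would use that the pieces $\stdcochain_i$ in the decomposition $\stdcochain = \sum_{i=0}^p \stdcochain_i$ are \emph{proper} $i$-level cochains, hence pairwise orthogonal by Lemma~\ref{lem:orthogonality-of-proper-level-cochains}, so that $\norm{\stdcochain}^2 = \sum_{i=0}^p \norm{\stdcochain_i}^2$ by Pythagoras. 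Substituting both estimates into the equality above and collecting the coefficient of each $\norm{\stdcochain_i}^2$ then gives
\[
    \innerprod{\upoperator[k]\stdcochain, \stdcochain} \le \sum_{i=0}^p \parens{ \frac{k+1}{k+2}\lambda_{\emptyset, i, k} + \frac{1}{k+2} }\norm{\stdcochain_i}^2 .
\]

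The argument has no real obstacle: all the content lives in Theorem~\ref{thm:main}, and the corollary is a one-line consequence of it together with the operator identity and orthogonality of proper level cochains. The only point needing care is the bookkeeping of the constant term: one must divide the whole Observation by $\frac{k+2}{k+1}$, not merely transpose the $-\frac{1}{k+1}I$ to the other side, and doing this correctly converts the Observation's $-\frac{1}{k+1}$ into the per-level additive constant $+\frac{1}{k+2}$ produced above (the latter being the weaker, and hence unconditionally valid, form of the estimate).
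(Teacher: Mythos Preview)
Your approach is the same as the paper's: invert the affine identity between $\upoperator[k]$ and $\nlupoperator[k]$, apply Theorem~\ref{thm:main}, and use the orthogonality of the proper level pieces (Lemma~\ref{lem:orthogonality-of-proper-level-cochains}) to split $\norm{\stdcochain}^2$. You carry out the algebra correctly and obtain the per-level coefficient $\frac{k+1}{k+2}\lambda_{\emptyset,i,k}+\frac{1}{k+2}$.

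This differs from the constant $\frac{k+1}{k+2}\lambda_{\emptyset,i,k}-\frac{1}{k+1}$ appearing in the paper's statement, and your version is the right one. The paper's proof contains arithmetic slips: it effectively uses $\upoperator[k]=\frac{k+2}{k+1}\nlupoperator[k]-\frac{1}{k+1}I$ (which is the formula for $\nlupoperator[k]$, not $\upoperator[k]$), and then in the last line the coefficient $\frac{k+2}{k+1}$ silently becomes $\frac{k+1}{k+2}$. The stated inequality is in fact too strong to be true: already for $k=0$ it would assert $\innerprod{\upoperator[0]\stdcochain,\stdcochain}\le\bigl(\tfrac{1}{2}\lambda_{\emptyset,0,0}-1\bigr)\norm{\stdcochain_0}^2$, which is negative whenever $\lambda_{\emptyset,0,0}<2$, contradicting positive semidefiniteness of $\upoperator[0]=\sldoperator_0^*\sldoperator_0$. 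Your closing remark that $+\frac{1}{k+2}$ is the constant that actually falls out of the Observation, and is the unconditionally valid one, is exactly on point.
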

\begin{proof}
    The following holds:
    \begin{multline*}
        \innerprod{\upoperator[k]\stdcochain, \stdcochain}
        = \sum_{i=0}^p{\frac{k+2}{k+1} \innerprod{\nlupoperator[k]\stdcochain, \stdcochain} - \frac{1}{k+1} \innerprod{\stdcochain, \stdcochain}}
        \le \sum_{i=0}^p{\frac{k+2}{k+1} \innerprod{\lambda_{\emptyset,i,k}\stdcochain, \stdcochain} - \frac{1}{k+1} \norm{\stdcochain}^2}=\\
        = \sum_{i=0}^p{\frac{k+2}{k+1} \lambda_{\emptyset,i,k}\norm{\stdcochain}^2 - \frac{1}{k+1} \norm{\stdcochain}^2}
        = \sum_{i=0}^p{\parens{\frac{k+1}{k+2}\lambda_{\emptyset, i, k} - \frac{1}{k+1}}\norm{\stdcochain_i}^2}
    \end{multline*}
\end{proof}
Before we finish this section, we prove the following (fairly technical) Lemma for completion
\begin{lemma}\label{lem:dealing-with-mixed-terms}
    With the notations in Theorem~\ref{thm:main-technichal} and $L=\set{\stdcochain^{\stdvertex}_i \suchthat 1 \le i \le p}$ it holds that for every set of constants $\set{c_{\gencochain, \gencochain'}}_{\substack{\gencochain, \gencochain' \in L \\ \gencochain \ne \gencochain'}}$ there exists constants $\set{c'_{\gencochain, \gencochain'}}_{\substack{\gencochain, \gencochain' \in L \\ \gencochain \ne \gencochain'}}$ such that:
    \[
        \sum_{\substack{\gencochain, \gencochain' \in L\\\gencochain \ne \gencochain'}}{c_{\gencochain, \gencochain'}\innerprod{\gencochain, \gencochain'}}
        = \sum_{i = 0}^p{\sum_{\substack{j=1\\i < j}}^p{{c'}_{i,j}\innerprod{\viewer[\stdvertex][\stdcochain_i], \viewer[\stdvertex][\stdcochain_j]}}}
    \]
\end{lemma}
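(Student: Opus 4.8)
The plan is to substitute the explicit formulas for the cochains $\stdcochain^{\stdvertex}_i$ into each inner product on the left--hand side and expand bilinearly, keeping careful track of the only genuinely non-trivial factor, namely $\parens{I-\downoperator[r][r]}\viewer[\stdvertex][\stdcochain_0]$. Recall from the table in the proof of Theorem~\ref{thm:main-technichal} that $\stdcochain^{\stdvertex}_i = \viewer[\stdvertex][\stdcochain_{i+1}]$ for every $i \ge 1$, while $\stdcochain^{\stdvertex}_0 = \viewer[\stdvertex][\stdcochain_1] + \parens{I-\downoperator[r][r]}\viewer[\stdvertex][\stdcochain_0]$. Plugging these in, bilinearity turns each summand $c_{\gencochain,\gencochain'}\innerprod{\gencochain,\gencochain'}$ into a finite combination of terms of two kinds: (a) scalar multiples of $\innerprod{\viewer[\stdvertex][\stdcochain_a], \viewer[\stdvertex][\stdcochain_b]}$ with $a \ne b$, which are already of the desired shape; and (b) scalar multiples of $\innerprod{\parens{I-\downoperator[r][r]}\viewer[\stdvertex][\stdcochain_0], \viewer[\stdvertex][\stdcochain_m]}$, which can only arise from pairing $\stdcochain^{\stdvertex}_0$ with some $\stdcochain^{\stdvertex}_b$, $b\ge 1$, so that $\viewer[\stdvertex][\stdcochain_m] = \stdcochain^{\stdvertex}_b = \viewer[\stdvertex][\stdcochain_{b+1}]$ with $m = b+1 \ge 2$. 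Here it is essential that the left--hand sum ranges only over $\gencochain \ne \gencochain'$: the diagonal pairing $\innerprod{\stdcochain^{\stdvertex}_0, \stdcochain^{\stdvertex}_0}$, the only place a term $\innerprod{\downoperator[r][r]\viewer[\stdvertex][\stdcochain_0], \viewer[\stdvertex][\stdcochain_0]}$ could appear, is excluded.

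Next I would rewrite the type--(b) terms. By definition $\downoperator[r][r] = \sldoperator_{r-1}\cdots\sldoperator_{-1}\sldoperator^{*}_{-1}\cdots\sldoperator^{*}_{r-1}$, and Lemma~\ref{lem:multi-sld-operator} shows that $\sldoperator_{r-1}\cdots\sldoperator_{-1}$ sends every $(-1)$-dimensional cochain to a constant cochain; hence $\downoperator[r][r]$ maps every cochain into the span of $\ind{}$ (explicitly, using also Lemma~\ref{lem:multi-sld-operator-adj}, $\downoperator[r][r]\gencochain = \innerprod{\gencochain, \ind{}}\ind{}$). On the other hand, for $m \ge 1$ the cochain $\stdcochain_m$ is an $m$-level cochain of $\stdcomplex$, so $\viewer[\stdvertex][\stdcochain_m]$ is an $(m-1)$-level cochain of $\stdcomplex_{\stdvertex}$, hence a $0$-level cochain of $\stdcomplex_{\stdvertex}$ by Lemma~\ref{lem:containment-of-i-level-cochains}, i.e.\ orthogonal to $\ind{}$. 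Consequently $\innerprod{\downoperator[r][r]\viewer[\stdvertex][\stdcochain_0], \viewer[\stdvertex][\stdcochain_m]} = 0$, and therefore $\innerprod{\parens{I-\downoperator[r][r]}\viewer[\stdvertex][\stdcochain_0], \viewer[\stdvertex][\stdcochain_m]} = \innerprod{\viewer[\stdvertex][\stdcochain_0], \viewer[\stdvertex][\stdcochain_m]}$, again of the desired shape, with distinct indices $0$ and $m \ge 2$.

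After these substitutions every summand has been written as a scalar times $\innerprod{\viewer[\stdvertex][\stdcochain_a], \viewer[\stdvertex][\stdcochain_b]}$ with $a \ne b$ and both $a,b$ in the index range of the right--hand side, so using the symmetry $\innerprod{\viewer[\stdvertex][\stdcochain_a], \viewer[\stdvertex][\stdcochain_b]} = \innerprod{\viewer[\stdvertex][\stdcochain_b], \viewer[\stdvertex][\stdcochain_a]}$ to place the smaller index first, I would define $c'_{i,j}$ to be the sum of all scalars that landed on the unordered pair $\set{i,j}$; this gives precisely the claimed identity. The whole argument is routine bilinear bookkeeping except for the single point that the $\parens{I-\downoperator[r][r]}\viewer[\stdvertex][\stdcochain_0]$ factor does not generate any term outside the allowed set, and the reason is exactly the two facts above: $\downoperator[r][r]$ lands in the constants while the higher localized level cochains avoid them. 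Correspondingly, the one ``dangerous'' inner product this lemma pointedly does \emph{not} have to absorb, $\innerprod{\downoperator[r][r]\viewer[\stdvertex][\stdcochain_0], \viewer[\stdvertex][\stdcochain_0]} = \norm{\downoperator[r][r]\viewer[\stdvertex][\stdcochain_0]}^2$, is exactly the remainder that is peeled off and charged to the advantage step in the proof of Theorem~\ref{thm:main-technichal}.
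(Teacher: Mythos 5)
Your proposal is correct and follows essentially the same route as the paper: expand the pairings of the $\stdcochain^{\stdvertex}_i$ bilinearly and observe that the only problematic cross terms, those involving $\downoperator[r][r]\viewer[\stdvertex][\stdcochain_0]$, vanish and can be dropped. In fact you make explicit the justification the paper leaves implicit (that $\downoperator[r][r]$ projects onto the constants while $\viewer[\stdvertex][\stdcochain_m]$ for $m\ge 2$ is orthogonal to them), which is exactly the right reason the paper's second-to-third equality holds.
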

\begin{proof}
Consider the following:
        \begin{multline*}
        \sum_{\substack{\gencochain, \gencochain' \in L\\\gencochain \ne \gencochain'}}{c_{\gencochain, \gencochain'}\innerprod{\gencochain, \gencochain'}}
        = \sum_{i = 2}^p{\sum_{\substack{j=2\\i < j}}^p{c_{i,j}\innerprod{\viewer[\stdvertex][\stdcochain_i], \viewer[\stdvertex][\stdcochain_j]}}} + \sum_{i=2}^p{c_{0,i}\innerprod{\viewer[\stdvertex][\stdcochain_i], \viewer[\stdvertex][\stdcochain_1]+\parens{I-\downoperator[r][r]}\viewer[\stdvertex][\stdcochain_0]}}\\
        = \sum_{i = 2}^p{\sum_{\substack{j=2\\i < j}}^p{c_{i,j}\innerprod{\viewer[\stdvertex][\stdcochain_i], \viewer[\stdvertex][\stdcochain_j]}}} + \sum_{i=2}^p{c_{0,i}\innerprod{\viewer[\stdvertex][\stdcochain_i], \viewer[\stdvertex][\stdcochain_1]+\viewer[\stdvertex][\stdcochain_0]}} - \sum_{i=2}^p{c_{0,i}\innerprod{\viewer[\stdvertex][\stdcochain_i], \downoperator[r][r]\viewer[\stdvertex][\stdcochain_0]}}\\
        = \sum_{i = 2}^p{\sum_{\substack{j=2\\i < j}}^p{c_{i,j}\innerprod{\viewer[\stdvertex][\stdcochain_i], \viewer[\stdvertex][\stdcochain_j]}}} + \sum_{i=2}^p{c_{0,i}\innerprod{\viewer[\stdvertex][\stdcochain_i], \viewer[\stdvertex][\stdcochain_1]}} + \sum_{i=2}^p{c_{0,i}\innerprod{\viewer[\stdvertex][\stdcochain_i], \viewer[\stdvertex][\stdcochain_0]]}}\\
        = \sum_{i = 0}^p{\sum_{\substack{j=1\\i < j}}^p{{c'}_{i,j}\innerprod{\viewer[\stdvertex][\stdcochain_i], \viewer[\stdvertex][\stdcochain_j]}}}
    \end{multline*}
\end{proof}
\section{Trickling Down}\label{sec:trickling-down}
Before we present our random walk decomposition Theorem, let us start with a ``warm up'':
An alternative proof for the trickling down theorem~\cite{https://doi.org/10.48550/arxiv.1709.04431} that is based on Theorem~\ref{thm:main}.
We believe that the fact that the main tool presented here can be used to prove the trickling down theorem is of independent interest: it shows that there is a single, local to global argument at the heart of both claims.
In the trickling down theorem we are interested in the connection between the $0$-dimensional non-lazy random walk on the vertices of a complex and the $0$-dimensional non-lazy up-down random walk on the links of the vertices of the complex.
It will, therefore, be natural to consider a link viewer that does not incur a decrease in dimension.
One such link viewer is the restriction link viewer defined as:
\begin{definition}[Restriction]
    Given a simplicial complex $\stdcomplex$ and a cochain $\stdcochain \in \cochainset{k}{\stdcomplex; \R}$ define the restriction link viewer in the following way:
    \[
        \forall \stdface \in \stdcomplex: \resviewer[\stdface]\stdcochain(\genface) = \stdcochain(\genface)
    \]
\end{definition}
This link viewer maps the $0$-dimensional non-lazy up-down random walk to the $0$-dimensional non-lazy up-down on the links of the vertices.
We will show that applying Theorem~\ref{thm:main} to the restriction link viewer yields the trickling down theorem.
However, before applying Theorem~\ref{thm:main}, we must first show that restriction is indeed a link viewer that respects the non-lazy up-down random walk.
\begin{lemma}
    The restriction link viewer is a link viewer.
\end{lemma}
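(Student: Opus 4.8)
The plan is to check, one at a time, the defining conditions of a link viewer for $\resviewer$. Recall that for $\stdcochain\in\cochainset{k}{\stdcomplex;\R}$ and $\stdface\in\stdcomplex$ with $\dim(\stdcomplex_\stdface)\ge k$, the cochain $\resviewer[\stdface]\stdcochain$ lives on $\stdcomplex_\stdface(k)$, i.e.\ on the $k$-faces $\genface$ of $\stdcomplex$ that are disjoint from $\stdface$ and satisfy $\genface\cup\stdface\in\stdcomplex$, and it is given by $\resviewer[\stdface]\stdcochain(\genface)=\stdcochain(\genface)$ — so restriction does not lower the dimension. Linearity of $\resviewer[\stdface]$ is immediate from this formula, and $\resviewer[\stdface]\ind{}=\ind{}$ since the constant cochain restricts to the constant cochain. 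Because $\dim(\resviewer[\stdface]\stdcochain)=\dim(\stdcochain)$ for every $\stdface$, the dimensional-difference condition holds with $\dimdiff[\resviewer]=0$. Path independence, $\resviewer[\stdface]=\resviewer[\stdface\setminus\genface]\resviewer[\genface]$ for $\genface\subseteq\stdface$, follows from the identity of links $(\stdcomplex_\genface)_{\stdface\setminus\genface}=\stdcomplex_\stdface$ together with the fact that both sides simply return the value of $\stdcochain$ at the face in question.

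The only condition requiring genuine work is compatibility with the inner product: for every dimension $i$ (in the range $i\le d-k-1$ where the links $\stdcomplex_\stdface$ still carry $k$-faces) one must show $\innerprod{\stdcochain,\gencochain}=\ev{\stdface\in\stdcomplex(i)}{\innerprod{\resviewer[\stdface]\stdcochain,\resviewer[\stdface]\gencochain}}$. Expanding the right-hand side using the definition of the inner product in $\stdcomplex_\stdface$ and the induced-weight formula $\weight[\stdface][\genface]=\weight[\genface\cup\stdface]/(\binom{i+k+2}{i+1}\weight[\stdface])$, the factor $\weight[\stdface]$ cancels and we are left with a sum over pairs $(\stdface,\genface)$, $\stdface\in\stdcomplex(i)$, $\genface\in\stdcomplex_\stdface(k)$, of $\weight[\genface\cup\stdface]\,\stdcochain(\genface)\gencochain(\genface)/\binom{i+k+2}{i+1}$. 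I would then reindex these pairs by the face $\rho=\stdface\cup\genface\in\stdcomplex(i+k+1)$ (a bijection onto pairs with $\genface\subset\rho$, $|\genface|=k+1$), swap the order of summation, and use $\binom{i+k+2}{i+1}=\binom{i+k+2}{k+1}$ so that for fixed $\genface\in\stdcomplex(k)$ the inner sum becomes $\sum_{\rho\in\stdcomplex(i+k+1),\ \genface\subseteq\rho}\weight[\rho]/\binom{i+k+2}{k+1}$; here purity of $\stdcomplex$ is used to guarantee that every $\genface\in\stdcomplex(k)$ actually occurs.

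To finish I would establish the generalized averaging identity $\weight[\genface]=\frac{1}{\binom{m+1}{k+1}}\sum_{\rho\in\stdcomplex(m),\ \genface\subseteq\rho}\weight[\rho]$, valid for every $\genface\in\stdcomplex(k)$ and every $k\le m\le d$ — obtained by substituting the definition of $\weight[\rho]$ in terms of top faces, swapping sums, counting the $\rho$ with $\genface\subseteq\rho\subseteq\stdface$ as $\binom{d-k}{m-k}$, and using the binomial cancellation $\binom{d-k}{m-k}\binom{d+1}{k+1}=\binom{m+1}{k+1}\binom{d+1}{m+1}$. Applying this with $m=i+k+1$ collapses the inner sum to $\weight[\genface]$, so the right-hand side equals $\sum_{\genface\in\stdcomplex(k)}\weight[\genface]\stdcochain(\genface)\gencochain(\genface)=\innerprod{\stdcochain,\gencochain}$, completing the verification.

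I expect the main obstacle to be purely combinatorial bookkeeping: getting the reindexing $(\stdface,\genface)\leftrightarrow\rho$ and all the attendant binomial factors exactly right, and handling the range of $i$ carefully so that every link appearing is nonempty (which is where purity of $\stdcomplex$ enters). Once the generalized weight-averaging identity is in hand, the rest is a routine Fubini argument.
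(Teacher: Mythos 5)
Your proposal is correct and follows essentially the same route as the paper: the first four properties are verified directly, and the inner-product compatibility is established by expanding with the induced link weights, cancelling $\weight[\stdface]$, and reindexing the sum by pairs. The only difference is that you explicitly prove the weight-averaging identity $\weight[\genface]=\binom{m+1}{k+1}^{-1}\sum_{\rho \in \stdcomplex(m),\, \genface\subseteq\rho}\weight[\rho]$, which the paper's final equality $\sum_{\stdface}\weight[\stdface]\weight[\stdface][\genface]=\weight[\genface]$ uses without proof, so your write-up is slightly more detailed but not a different argument.
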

\begin{proof}
    We will prove the properties point by point:
    \begin{itemize}
        \item $\resviewer[\stdface][\parens{\stdcochain + c \cdot \gencochain}](\genface) = \parens{\stdcochain + c \cdot \gencochain}(\genface) = \stdcochain(\genface) + c \cdot \gencochain(\genface) = \resviewer[\stdface][\stdcochain](\genface) + c \cdot \resviewer[\stdface][\gencochain](\genface)$
        \item $\resviewer[\stdface][\ind{}] = \ind{}$
        \item It holds that for every cochain $\stdcochain$ and every face $\stdface$:
        \[
            \dim\parens{\stdcochain} - \dim\parens{\resviewer[\stdface][\stdcochain]} = \dim\parens{\stdcochain} - \dim\parens{\stdcochain} = 0
        \]
        \item Let $\genface \subseteq \stdface \in \stdcomplex$ then:
        \[
            \resviewer[\stdface\setminus\genface][\resviewer[\genface][\stdcochain]](\genface')
            = \resviewer[\genface][\stdcochain](\genface')
            = \stdcochain(\genface')
            = \resviewer[\stdface][\stdcochain](\genface')
        \]
        \item For every dimension $i$ and $j=\dim\parens{\viewer[\stdface][\stdcochain]}$ it holds that:
        \begin{multline*}
            \ev{\stdface \in \stdcomplex(i)}{\innerprod{\resviewer[\stdface][\stdcochain], \resviewer[\stdface][\gencochain]}}
            = \sum_{\stdface \in \stdcomplex(i)}{\weight[\stdface]\innerprod{\resviewer[\stdface][\stdcochain], \resviewer[\stdface][\gencochain]}} = \\
            = \sum_{\stdface \in \stdcomplex(i)}{\weight[\stdface]\sum_{\genface \in \stdcomplex_{\stdface}(j)}{\weight[\stdface][\genface]\resviewer[\stdface][\stdcochain](\genface) \resviewer[\stdface][\gencochain](\genface)}} =\\
            = \sum_{\stdface \in \stdcomplex(i)}{\weight[\stdface]\sum_{\substack{\genface \in \stdcomplex(j)\\ \stdface \cup \genface \in \stdcomplex(i+j+1)}}{\weight[\stdface][\genface]\resviewer[\stdface][\stdcochain](\genface) \resviewer[\stdface][\gencochain](\genface)}} =\\
            = \sum_{\stdface \in \stdcomplex(i)}{\weight[\stdface]\sum_{\substack{\genface \in \stdcomplex(j)\\ \stdface \cup \genface \in \stdcomplex(i+j+1)}}{\weight[\stdface][\genface]\stdcochain(\genface) \gencochain(\genface)}} = \\
            = \sum_{\genface \in \stdcomplex(j)}{\weight[\stdface]\sum_{\substack{\stdface \in \stdcomplex(i)\\ \stdface \cup \genface \in \stdcomplex(i+j+1)}}{\weight[\stdface][\genface]\stdcochain(\genface) \gencochain(\genface)}} =\\
            = \sum_{\genface \in \stdcomplex(j)}{\stdcochain(\genface) \gencochain(\genface) \sum_{\substack{\stdface \in \stdcomplex(i)\\ \stdface \cup \genface \in \stdcomplex(i+j+1)}}{\weight[\stdface]\weight[\stdface][\genface]}} =\\
            = \sum_{\genface \in \stdcomplex(j)}{\weight[\genface] \stdcochain(\genface) \gencochain(\genface)}
            = \innerprod{\stdcochain, \gencochain}
        \end{multline*}
    \end{itemize}
\end{proof}

\begin{lemma}
    The restriction link viewer respects the non-lazy random walk operator.
\end{lemma}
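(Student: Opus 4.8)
The plan is to bypass any entrywise expansion of $\nlupoperator[k]$ and route everything through the factorizations established earlier: the Observation $\nlupoperator[k] = \frac{k+2}{k+1}\upoperator - \frac{1}{k+1}I$ together with $\upoperator[k] = \sldoperator_k^*\sldoperator_k$. Combined, these give, for every $\stdcochain \in \cochainset{k}{\stdcomplex;\R}$,
\[
    \innerprod{\nlupoperator[k]\stdcochain, \stdcochain} = \frac{k+2}{k+1}\norm{\sldoperator_k\stdcochain}^2 - \frac{1}{k+1}\norm{\stdcochain}^2,
\]
and the very same identity holds inside each link $\stdcomplex_{\stdvertex}$, applying those two facts to $\stdcomplex_{\stdvertex}$ in place of $\stdcomplex$: $\innerprod{\nlupoperator[\stdvertex,k]\gencochain, \gencochain} = \frac{k+2}{k+1}\norm{\sldoperator_{\stdvertex,k}\gencochain}^2 - \frac{1}{k+1}\norm{\gencochain}^2$ for $\gencochain \in \cochainset{k}{\stdcomplex_{\stdvertex};\R}$. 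Since $\dimdiff = 0$ for the restriction viewer, it therefore suffices to prove the two ``transfer to the links'' identities $\norm{\stdcochain}^2 = \ev{\stdvertex \in \stdcomplex(0)}{\norm{\resviewer[\stdvertex]\stdcochain}^2}$ and $\norm{\sldoperator_k\stdcochain}^2 = \ev{\stdvertex \in \stdcomplex(0)}{\norm{\sldoperator_{\stdvertex,k}\resviewer[\stdvertex]\stdcochain}^2}$, where the norms on the right are taken in $\stdcomplex_{\stdvertex}$.

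The first of these is just the inner-product axiom of a link viewer instantiated at dimension $i = 0$, which we have already verified for restriction. For the second, the crucial point is that restriction commutes with the signless differential: $\resviewer[\stdvertex]\sldoperator_k\stdcochain = \sldoperator_{\stdvertex,k}\resviewer[\stdvertex]\stdcochain$ for every vertex $\stdvertex$. This is immediate from $\sldoperator_k\stdcochain(\stdface) = \frac{1}{k+2}\sum_{\genface \in \binom{\stdface}{k+1}}\stdcochain(\genface)$: the value of $\sldoperator_k\stdcochain$ on a $(k+1)$-face $\stdface$ depends only on the values of $\stdcochain$ on the $k$-subfaces of $\stdface$, restriction leaves those values untouched, and every such subface of a face $\stdface \in \stdcomplex_{\stdvertex}(k+1)$ lies in $\stdcomplex_{\stdvertex}(k)$ by downward closure (it is a subset of $\stdvertex \cup \stdface \in \stdcomplex$). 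Granting this, a second application of the inner-product axiom at dimension $0$, now to the $(k+1)$-cochain $\sldoperator_k\stdcochain$, gives $\norm{\sldoperator_k\stdcochain}^2 = \ev{\stdvertex}{\norm{\resviewer[\stdvertex]\sldoperator_k\stdcochain}^2} = \ev{\stdvertex}{\norm{\sldoperator_{\stdvertex,k}\resviewer[\stdvertex]\stdcochain}^2}$.

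Plugging the two transfer identities into the factorization of $\innerprod{\nlupoperator[k]\stdcochain, \stdcochain}$ and pulling the expectation over $\stdvertex$ to the outside yields
\[
    \innerprod{\nlupoperator[k]\stdcochain, \stdcochain} = \ev{\stdvertex \in \stdcomplex(0)}{\frac{k+2}{k+1}\norm{\sldoperator_{\stdvertex,k}\resviewer[\stdvertex]\stdcochain}^2 - \frac{1}{k+1}\norm{\resviewer[\stdvertex]\stdcochain}^2} = \ev{\stdvertex \in \stdcomplex(0)}{\innerprod{\nlupoperator[\stdvertex,k]\resviewer[\stdvertex]\stdcochain, \resviewer[\stdvertex]\stdcochain}},
\]
the last step being the in-link version of the factorization recorded above; this is exactly the statement that the restriction link viewer respects the non-lazy up-down random walk. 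I do not anticipate a genuine obstacle: the only step that needs care is the commutation of restriction with $\sldoperator_k$, which rests on the downward-closure observation, and checking that the inner-product axiom may legitimately be invoked for $\sldoperator_k\stdcochain$ — which it may, since $\sldoperator_k\stdcochain$ is an honest $(k+1)$-cochain of $\stdcomplex$.
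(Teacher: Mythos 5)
Your proposal is correct and follows essentially the same route as the paper: commute restriction with the signless differential, use $\upoperator[k]=\sldoperator^*_k\sldoperator_k$ together with the inner-product axiom of link viewers to transfer to the links, and finish via $\nlupoperator[k]=\frac{k+2}{k+1}\upoperator[k]-\frac{1}{k+1}I$. The only cosmetic difference is that you work with the quadratic form $\innerprod{\nlupoperator[k]\stdcochain,\stdcochain}$ (which suffices for the definition of respecting the walk) while the paper carries general bilinear inner products $\innerprod{\nlupoperator[k]\stdcochain,\gencochain}$ through the same computation.
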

\begin{proof}
    Note that, for every $i$-dimensional cochian $\gencochain$ it holds that:
    \[
        \sldoperator \resviewer[\stdface][\gencochain](\genface)
        = \sum_{\genface' \in \binom{\genface}{i}}{\frac{1}{i+1}\resviewer[\stdface][\gencochain](\genface')}
        = \sum_{\genface' \in \binom{\genface}{i}}{\frac{1}{i+1}\gencochain(\genface')}
        = \sldoperator \gencochain(\genface)
        = \resviewer[\stdface][\parens{\sldoperator\gencochain}](\genface)
    \]
    It therefore holds, for every dimension $k$:
    \begin{multline*}
        \innerprod{\upoperator[k]\stdcochain, \gencochain}
        = \innerprod{\sldoperator^*_k\sldoperator_k\stdcochain, \gencochain}
        = \innerprod{\sldoperator_k\stdcochain, \sldoperator_k\gencochain}
        = \ev{\stdface \in \stdcomplex(i)}{\innerprod{\resviewer[\stdface][\sldoperator_k\stdcochain], \resviewer[\stdface][\sldoperator_k\gencochain]}}\\
        = \ev{\stdface \in \stdcomplex(i)}{\innerprod{\sldoperator_k\resviewer[\stdface][\stdcochain], \sldoperator_k\resviewer[\stdface][\gencochain]}}
        = \ev{\stdface \in \stdcomplex(i)}{\innerprod{\sldoperator^*_k\sldoperator_k\resviewer[\stdface][\stdcochain], \resviewer[\stdface][\gencochain]}}
        = \ev{\stdface \in \stdcomplex(i)}{\innerprod{\upoperator[k]\resviewer[\stdface][\stdcochain], \resviewer[\stdface][\gencochain]}}
    \end{multline*}
    And thus:
    \begin{multline*}
        \innerprod{\nlupoperator[k]\stdcochain, \gencochain}
        = \frac{k+2}{k+1} \innerprod{\upoperator[k]\stdcochain, \gencochain} - \frac{1}{k+1} \innerprod{\stdcochain, \gencochain}\\
        = \frac{k+2}{k+1} \ev{\stdface \in \stdcomplex(i)}{\innerprod{\upoperator[k]\resviewer[\stdface][\stdcochain], \resviewer[\stdface][\gencochain]}} - \frac{1}{k+1} \ev{\stdface \in \stdcomplex(i)}{\innerprod{\resviewer[\stdface][\stdcochain], \resviewer[\stdface][\gencochain]}}\\
        = \ev{\stdface \in \stdcomplex(i)}{\frac{k+2}{k+1}\innerprod{\upoperator[k]\resviewer[\stdface][\stdcochain], \resviewer[\stdface][\gencochain]}- \frac{1}{k+1}\innerprod{\resviewer[\stdface][\stdcochain], \resviewer[\stdface][\gencochain]}}
        = \ev{\stdface \in \stdcomplex(i)}{\innerprod{\nlupoperator[k]\resviewer[\stdface][\stdcochain], \resviewer[\stdface][\gencochain]}}
    \end{multline*}
\end{proof}
\begin{lemma}\label{lem:trickling-down-advantage-technical}
    It holds for every vertex $\stdvertex$ and every cochain $\stdcochain \in \cochainset{0}{\stdcomplex;\R}$ that:
    \[
        \downoperator[\stdvertex, 0]\resviewer[\stdvertex][\stdcochain_0]
        = \innerprod{\resviewer[\stdvertex][\stdcochain], \ind{}}_{\stdvertex}
        = \ev{\genvertex \in \stdcomplex_{\stdvertex}(0)}{\resviewer[\stdvertex][\stdcochain](\genvertex)} 
        = \nlupoperator[0]\stdcochain(\stdvertex) 
    \]
\end{lemma}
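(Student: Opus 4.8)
The plan is to observe that all four expressions in the displayed chain of equalities are just different names for one scalar, namely the $\weightletter_{\stdvertex}$-weighted average $\ev{\genvertex \in \stdcomplex_{\stdvertex}(0)}{\stdcochain(\genvertex)}$ of $\stdcochain$ over the neighbours of $\stdvertex$ in $\skeleton{\stdcomplex}{1}$, and to verify this by unwinding the definitions. Two facts drive everything: the restriction link viewer leaves values unchanged, so $\resviewer[\stdvertex][\stdcochain](\genvertex) = \stdcochain(\genvertex)$; and $\stdcomplex_{\stdvertex}(0)$ is exactly the set of vertices $\genvertex$ with $\set{\stdvertex,\genvertex} \in \stdcomplex(1)$, on which $\genvertex \mapsto \weight[\stdvertex][\set{\genvertex}]$ is a probability distribution because the weights of a link sum to $1$ in each dimension.

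For the two rightmost equalities, unfolding the inner product in the link $\stdcomplex_{\stdvertex}$ and using $\ind{} \equiv 1$ gives $\innerprod{\resviewer[\stdvertex][\stdcochain], \ind{}}_{\stdvertex} = \sum_{\genvertex \in \stdcomplex_{\stdvertex}(0)} \weight[\stdvertex][\set{\genvertex}]\,\resviewer[\stdvertex][\stdcochain](\genvertex) = \ev{\genvertex \in \stdcomplex_{\stdvertex}(0)}{\resviewer[\stdvertex][\stdcochain](\genvertex)} = \ev{\genvertex \in \stdcomplex_{\stdvertex}(0)}{\stdcochain(\genvertex)}$. On the other hand, reading off the entry of $\nlupoperator[0]$ from Definition~\ref{def:non-lazy-up-down-random-walk} at $k=0$, one has $[\nlupoperator[0]]_{\stdvertex,\genvertex} = \weight[\stdvertex][\genvertex \setminus \stdvertex] = \weight[\stdvertex][\set{\genvertex}]$ when $\set{\stdvertex,\genvertex} \in \stdcomplex(1)$ and $0$ otherwise (in particular the diagonal entry vanishes, since the walk is non-lazy: $\set{\stdvertex}\cup\set{\stdvertex} \notin \stdcomplex(1)$), so $\nlupoperator[0]\stdcochain(\stdvertex) = \sum_{\genvertex \in \stdcomplex_{\stdvertex}(0)} \weight[\stdvertex][\set{\genvertex}]\,\stdcochain(\genvertex) = \ev{\genvertex \in \stdcomplex_{\stdvertex}(0)}{\stdcochain(\genvertex)}$ as well.

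Finally, for the leftmost equality I would use that a dimension-$0$ down--up operator collapses a cochain to its (weighted) average. Working inside the link $\stdcomplex_{\stdvertex}$ and applying the characterization $\downoperator[k] = \sldoperator_{k-1}\sldoperator^{*}_{k-1}$ at $k=0$, we get $\downoperator[\stdvertex,0] = \sldoperator_{\stdvertex,-1}\sldoperator^{*}_{\stdvertex,-1}$, where by Lemma~\ref{lem:sld-adj-explicit} at $k=-1$ the operator $\sldoperator^{*}_{\stdvertex,-1}$ sends a $0$-cochain $h$ to the $(-1)$-cochain with value $\ev{\genvertex \in \stdcomplex_{\stdvertex}(0)}{h(\genvertex)}$, and $\sldoperator_{\stdvertex,-1}$ re-inflates it to the constant $0$-cochain of that value (equivalently, every entry of the matrix of $\downoperator[\stdvertex,0]$, diagonal and off-diagonal alike, equals $\weight[\stdvertex][\set{\genvertex'}]$). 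Hence $\downoperator[\stdvertex,0]\resviewer[\stdvertex][\stdcochain_0]$ is the constant cochain of value $\innerprod{\resviewer[\stdvertex][\stdcochain_0], \ind{}}_{\stdvertex}$, which we identify with that scalar; since $\stdcochain_0$ and $\stdcochain$ agree as $0$-dimensional cochains here (the subscript only matches the notation of the advantage step, where the input is a $0$-level cochain), this is the second expression and the chain closes. There is essentially no obstacle -- the statement is a bookkeeping identity; the only thing requiring care is the passage between operators on $\stdcomplex$ and operators inside $\stdcomplex_{\stdvertex}$, i.e.\ checking that $\stdcomplex_{\stdvertex}(0)$ is the neighbourhood of $\stdvertex$ and that the link weight $\weight[\stdvertex][\set{\genvertex}]$ equals the transition probability $[\nlupoperator[0]]_{\stdvertex,\genvertex}$, which is precisely what lets the dimension-$0$ down--up-in-link operator read off the global $0$-dimensional non-lazy walk.
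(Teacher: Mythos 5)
Your proposal is correct and follows essentially the same route as the paper: a direct unwinding of the definitions of the restriction link viewer, the inner product in $\stdcomplex_{\stdvertex}$, and the entries of $\nlupoperator[0]$ at $k=0$. In fact it is slightly more complete than the paper's own proof, which only verifies the chain from $\nlupoperator[0]\stdcochain(\stdvertex)$ to the link average, whereas you also justify the leftmost expression by computing $\downoperator[\stdvertex,0]=\sldoperator_{\stdvertex,-1}\sldoperator^{*}_{\stdvertex,-1}$ and identifying the resulting constant cochain with its value.
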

\begin{proof}
    \begin{multline*}
        \nlupoperator[0]\stdcochain(\stdvertex)
        = \sparens{\nlupoperator[0]\stdcochain}_{\stdvertex}
        = \sum_{\genvertex \in \stdcomplex(0)}{\sparens{\nlupoperator[0]}_{\stdvertex, \genvertex}\stdcochain(\genvertex)}= \\
        = \sum_{\substack{\genvertex \in \stdcomplex(0)\\\genvertex \cup \stdvertex \in \stdcomplex(1)}}{\weight[\stdvertex][\genvertex \setminus \stdvertex]\stdcochain(\genvertex)} 
         = \sum_{\genvertex \in \stdcomplex_{\stdvertex}(0)}{\weight[\stdvertex][\genvertex]\resviewer[\stdvertex][\stdcochain](\genvertex)}
        = \ev{\genvertex \in \stdcomplex_{\stdvertex}(0)}{\resviewer[\stdvertex][\stdcochain](\genvertex)}
    \end{multline*}
\end{proof}
\begin{corollary}[The advantage]\label{cor:trickling-down-advantage}
    If $\stdcomplex$'s $1$-skeleton is a $\lambda$-spectral expander it holds for every vertex $\stdvertex$ and every cochain $\stdcochain \in \cochainset{0}{\stdcomplex;\R}$ that:
    \[
        \norm{\sldoperator^*_0\resviewer[\stdvertex][\stdcochain_0]} 
        = \norm{\downoperator[0]\resviewer[\stdvertex][\stdcochain_0]} 
        \le \lambda^2 \norm{\stdcochain}^2
    \]
\end{corollary}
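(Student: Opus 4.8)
The plan is to unwind $\downoperator[0]\resviewer[\stdvertex][\stdcochain_0]$ into a single application of the vertex-level non-lazy walk $\nlupoperator[0]$ and then to invoke the spectral gap of $\skeleton{\stdcomplex}{1}$. In any link $\stdcomplex_{\stdvertex}$ the operator $\downoperator[0]$ factors through the $(-1)$-dimensional cochains, i.e.\ through the scalars, so it is the orthogonal projection onto the constant functions: $\downoperator[0]\resviewer[\stdvertex][\stdcochain_0]$ is the constant function on $\stdcomplex_{\stdvertex}$ whose value is $\sldoperator^*_0\resviewer[\stdvertex][\stdcochain_0] = \ev{\genvertex \in \stdcomplex_{\stdvertex}(0)}{\resviewer[\stdvertex][\stdcochain_0](\genvertex)}$. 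Since the lift from a scalar to the corresponding constant function is an isometry, this gives for free the first equality $\norm{\sldoperator^*_0\resviewer[\stdvertex][\stdcochain_0]} = \norm{\downoperator[0]\resviewer[\stdvertex][\stdcochain_0]}$, and by Lemma~\ref{lem:trickling-down-advantage-technical} the common value is exactly $\nlupoperator[0]\stdcochain_0(\stdvertex)$; hence $\norm{\downoperator[0]\resviewer[\stdvertex][\stdcochain_0]}^2 = (\nlupoperator[0]\stdcochain_0(\stdvertex))^2$ for every vertex $\stdvertex$. This step is purely definitional.

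Having this pointwise description, I would take the weight-average over the vertices. By definition of the norm on $\cochainset{0}{\stdcomplex;\R}$ one has $\ev{\stdvertex \in \stdcomplex(0)}{\norm{\downoperator[0]\resviewer[\stdvertex][\stdcochain_0]}^2} = \ev{\stdvertex \in \stdcomplex(0)}{(\nlupoperator[0]\stdcochain_0(\stdvertex))^2} = \norm{\nlupoperator[0]\stdcochain_0}^2$. Now $\stdcochain_0$ is the $0$-level part of $\stdcochain$, hence orthogonal to the constants; since $\skeleton{\stdcomplex}{1}$ is a $\lambda$-spectral expander, the non-lazy walk $\nlupoperator[0]$ contracts the orthogonal complement of the constants by a factor $\lambda$, so $\norm{\nlupoperator[0]\stdcochain_0}^2 \le \lambda^2\norm{\stdcochain_0}^2 \le \lambda^2\norm{\stdcochain}^2$, which is the bound asserted. (The square on $\lambda$ is nothing but the effect of measuring a \emph{squared} norm after one walk step; and $\norm{\stdcochain_0}^2 \le \norm{\stdcochain}^2$ because $\stdcochain_0$ is an orthogonal summand of $\stdcochain$.)

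The one point that needs care — and the only real subtlety I anticipate — is that the displayed inequality cannot hold vertex by vertex: for a fixed $\stdvertex$ the quantity $\norm{\downoperator[0]\resviewer[\stdvertex][\stdcochain_0]} = |\nlupoperator[0]\stdcochain_0(\stdvertex)|$ can greatly exceed $\lambda^2\norm{\stdcochain}^2$ (take $\stdcochain_0$ concentrated near a low-weight neighbour of $\stdvertex$), so the statement must be understood as the bound on the weight-average $\ev{\stdvertex \in \stdcomplex(0)}{\norm{\downoperator[0]\resviewer[\stdvertex][\stdcochain_0]}^2}$ — which is precisely the form in which ``the advantage'' is consumed inside the proof of Theorem~\ref{thm:main-technichal}. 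Once the statement is phrased that way, the pointwise identity of the first paragraph together with the spectral-gap hypothesis finish it immediately; the only mildly fiddly ingredient, the identity $\downoperator[0]\resviewer[\stdvertex][\stdcochain_0] = \nlupoperator[0]\stdcochain_0(\stdvertex)$, is already established in Lemma~\ref{lem:trickling-down-advantage-technical}.
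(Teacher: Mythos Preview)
Your argument is correct and is essentially the paper's own proof: use Lemma~\ref{lem:trickling-down-advantage-technical} to identify $\downoperator[0]\resviewer[\stdvertex][\stdcochain_0]$ with the constant $\nlupoperator[0]\stdcochain_0(\stdvertex)$, then invoke the spectral gap of $\skeleton{\stdcomplex}{1}$ on the $0$-level part. You are also right that the inequality cannot hold for every fixed $\stdvertex$ and must be read after averaging; the paper's proof in fact writes $\norm{\downoperator[0]\resviewer[\stdvertex][\stdcochain_0]}^2 = \norm{\nlupoperator[0]\stdcochain}^2$ as though this held for a single $\stdvertex$, which is precisely the imprecision you flagged, and the corollary is indeed only ever used after taking $\ev{\stdvertex \in \stdcomplex(0)}{\,\cdot\,}$ in the trickling-down proof.
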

\begin{proof}
    Note that the $1$-skeleton of $\stdcomplex$ is a $\lambda$-spectral expander and $\stdcochain \in \cochainset{0}{\stdcomplex;\R}$ therefore $\norm{\nlupoperator[0]\stdcochain}^2 \le \lambda^2 \norm{\stdcochain}^2$.
    Combining this with Lemma~\ref{lem:trickling-down-advantage-technical} yields:
    \[
        \norm{\downoperator[0]\resviewer[\stdvertex][\stdcochain_0]}^2
        = \norm{\nlupoperator[0]\stdcochain}^2
        = \norm{\nlupoperator[0]\stdcochain_0}^2
        \le \lambda^2 \norm{\stdcochain_0}^2
    \]
\end{proof}
We can now show how applying Theorem~\ref{thm:main} to the restriction link viewer yields Oppenheim's trickling down theorem~\cite[Theorem~4.1]{https://doi.org/10.48550/arxiv.1709.04431}.
\begin{theorem}[Trickling Down, restated Theorem~\ref{thm:trickling-down}]
    If it holds that:
    \begin{itemize}
        \item For every vertex $\stdvertex$: $\stdcomplex_{\stdvertex}$ is a $\lambda_{\stdvertex,0,k}$ spectral expander.
        \item $\stdcomplex$ is connected.
    \end{itemize}
    Then it holds that $\lambda_{\emptyset, 0, k} = \frac{\lambda_{\emptyset, 1, k}}{1-\lambda_{\emptyset, 1, k}}$.
\end{theorem}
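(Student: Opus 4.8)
\emph{Proof plan.} The plan is to deduce this from Theorem~\ref{thm:main} applied with the \emph{restriction} link viewer $\resviewer$ (so the relevant notion of $i$-level cochain is the one attached to $\resviewer$, not to localization). The two preceding lemmas already supply what is needed: $\resviewer$ is a link viewer that respects the non-lazy up-down walk, and for it the dimensional difference is $0$, hence $r=k$ and a vertex cochain on $\stdcomplex$ simply restricts to a vertex cochain on each vertex link; in the trickling-down setting $k=0$, so the whole argument concerns $\nlupoperator[0]$ and hence $\skeleton{\stdcomplex}{1}$. Writing any $\stdcochain$ orthogonal to the constants as a sum of its proper level parts $\stdcochain=\sum_i\stdcochain_i$ (pairwise orthogonal by Lemma~\ref{lem:orthogonality-of-proper-level-cochains}), the conclusion of Theorem~\ref{thm:main} reads $\innerprod{\nlupoperator[k]\stdcochain,\stdcochain}\le\sum_i\lambda_{\emptyset,i,k}\norm{\stdcochain_i}^2$; since the recursion below yields $\lambda_{\emptyset,i,k}\le\lambda_{\emptyset,0,k}$ for every $i$, this collapses to $\innerprod{\nlupoperator[0]\stdcochain,\stdcochain}\le\lambda_{\emptyset,0,k}\norm{\stdcochain}^2$, i.e.\ $\lambda_2\parens{\skeleton{\stdcomplex}{1}}\le\lambda_{\emptyset,0,k}$. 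Thus, once we produce $\lambda$-values realizing $\lambda_{\emptyset,0,k}=\frac{\lambda_{\emptyset,1,k}}{1-\lambda_{\emptyset,1,k}}$ with $\lambda_{\emptyset,1,k}$ the worst vertex-link eigenvalue, we recover Theorem~\ref{thm:trickling-down}.

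It therefore remains to exhibit values $\set{\lambda_{\stdface,i,j}}$ satisfying the two hypotheses of Theorem~\ref{thm:main}. The second hypothesis I would meet with equality, setting $\lambda_{\stdface,1,k}:=\max_{\stdvertex\in\stdcomplex_{\stdface}(0)}\lambda_{\stdface\cup\set{\stdvertex},0,k}$ at every face; at $\stdface=\emptyset$ this is precisely the hypothesis that every vertex link is a spectral expander. For the first hypothesis the key simplification is the ``advantage'': by Lemma~\ref{lem:trickling-down-advantage-technical} applied inside $\stdcomplex_{\stdface}$ one has $\norm{\downoperator[r][r]\resviewer[\stdvertex][\gencochain_0]}^2=\parens{\nlupoperator[0]\gencochain_0(\stdvertex)}^2$, so after pulling out the (constant in $\stdvertex$) factor the averaged term equals $\parens{1-\lambda_{\stdface,1,k}}\norm{\nlupoperator[0]\gencochain_0}^2$, and Corollary~\ref{cor:trickling-down-advantage} applied inside $\stdcomplex_{\stdface}$ bounds $\norm{\nlupoperator[0]\gencochain_0}^2\le\lambda_{\stdface,0,k}^2\norm{\gencochain_0}^2$. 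Hence the first hypothesis is implied by the scalar inequality $\lambda_{\stdface,1,k}+\parens{1-\lambda_{\stdface,1,k}}\lambda_{\stdface,0,k}^2\le\lambda_{\stdface,0,k}$, and I would \emph{define} $\lambda_{\stdface,0,k}$ to be the smaller root $\frac{\lambda_{\stdface,1,k}}{1-\lambda_{\stdface,1,k}}$ of the quadratic $(1-\mu)x^2-x+\mu$ (whose roots are $\frac{\mu}{1-\mu}$ and $1$), so that the inequality holds, with equality. Specializing to $\stdface=\emptyset$ gives the claimed identity $\lambda_{\emptyset,0,k}=\frac{\lambda_{\emptyset,1,k}}{1-\lambda_{\emptyset,1,k}}$.

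The main obstacle is that verifying the first hypothesis at $\stdface$ feeds on Corollary~\ref{cor:trickling-down-advantage} for $\stdcomplex_{\stdface}$, whose input is exactly the eigenvalue bound $\lambda_2\parens{\skeleton{\stdcomplex_{\stdface}}{1}}\le\lambda_{\stdface,0,k}$ that Theorem~\ref{thm:main} outputs for $\stdcomplex_{\stdface}$ — so the argument is self-referential and must be organized as a bootstrapping induction on the codimension of $\stdface$, working from the shallow links of $\stdcomplex$ (where the eigenvalue bound is available directly, or trivially) outward one link-level at a time, each step being the fixed-point identity above. Connectedness of $\stdcomplex$ (which trickling down also passes to all links) is what makes the relevant spectral quantity the honest second eigenvalue and guarantees that the \emph{minimal} fixed point $\frac{\lambda}{1-\lambda}$ is attained rather than the spurious root $1$. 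I expect closing this loop cleanly — and, relatedly, pinning down the operator-norm form of the advantage in the one-sided setting — to be where the real work lies; the remainder is just substituting the chosen $\lambda$-values into the statement of Theorem~\ref{thm:main}.
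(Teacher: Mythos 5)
Your proposal follows essentially the same route as the paper's proof: apply Theorem~\ref{thm:main} with the restriction link viewer, use Lemma~\ref{lem:trickling-down-advantage-technical} and Corollary~\ref{cor:trickling-down-advantage} to reduce the recursive hypothesis to the scalar inequality $\lambda_{\emptyset,1,k}+(1-\lambda_{\emptyset,1,k})\lambda_{\emptyset,0,k}^{2}\le\lambda_{\emptyset,0,k}$, and pick the root $\lambda_{\emptyset,0,k}=\frac{\lambda_{\emptyset,1,k}}{1-\lambda_{\emptyset,1,k}}$. The self-referential point you flag (the advantage consuming the very eigenvalue bound being produced, settled via connectedness and a fixed-point/iteration argument) is left implicit in the paper's own proof as well, so your write-up is, if anything, more explicit about that step.
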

\begin{proof}
    Consider the following:
    \[
        \lambda_{\emptyset, 1, k}\norm{\stdcochain_0}^2 + (1-\lambda_{\emptyset, 1, k})\ev{\stdvertex \in \stdcomplex(0)}{\norm{\downoperator[\stdvertex, 0]\resviewer[\stdvertex][\stdcochain_0]}^2} \le \lambda_{\emptyset, 0, k} \norm{\stdcochain_0}^2
    \]
    Using Corollary~\ref{cor:trickling-down-advantage} it suffices to find values of $\lambda_{\stdface, i,j}$ such that:
    \[
        \lambda_{\emptyset, 1, k}\norm{\stdcochain_0}^2 + (1-\lambda_{\emptyset, 1, k})\lambda_{\emptyset, 0, k}^2\norm{\stdcochain_0}^2 \le \lambda_{\emptyset, 0, k} \norm{\stdcochain_0}^2
    \]
    Therefore solving the following inequality would bound $\lambda_{\emptyset, 0, k}$:
    \[
        \lambda_{\emptyset, 1, k} + (1-\lambda_{\emptyset, 1, k})\lambda_{\emptyset, 0, k}^2 \le \lambda_{\emptyset, 0, k}
    \]
    Note that picking $\lambda_{\emptyset, 0, k} = \frac{\lambda_{\emptyset, 1, k}}{1-\lambda_{\emptyset, 1, k}}$ satisfies the inequality and thus proves the Theorem.
\end{proof}
Before we end this section we would like to give a few words on why this proof does not yield a decomposition of the $0$-dimensioanl random walk operator that improves upon the Theorem's original formulation.
Consider how the $1$-level cochains behave when applying the non-lazy random walk to them:
Due to Lemma~\ref{lem:trickling-down-advantage-technical} the following holds for every $1$-level cochain $\stdcochain$:
\[
    \nlupoperator[0]\stdcochain(\stdvertex) = \innerprod{\resviewer[\stdvertex][\stdcochain], \ind{}} = 0
\]
Where the second equality is due to the very definition of $1$-level cochains.
Therefore, while the decomposition that Theorem~\ref{thm:main} guarantees does exist, it is also meaningless as $\nlupoperator[0]\stdcochain = \nlupoperator[0]\stdcochain_0$.
\section{Decomposition of the Random Walk Operators}\label{sec:decomposition-of-the-walk-operators}
We are now ready to present the random walk decomposition theorem based on Theorem~\ref{thm:main}.
Unlike the tricking down theorem, here the assumption we have is only on the expansion of the no-lazy up-down random walk on the vertices.
We will, therefore, be interested in a link viewer that decreases the dimension of the cochain.
Namely, the localization link viewer, defined as follows:
\begin{definition}[Localization]
    Given a simplicial complex $\stdcomplex$ and a cochain $\stdcochain \in \cochainset{k}{\stdcomplex; \R}$ define the localization link viewer in the following way:
    \[
        \forall \stdface \in \stdcomplex: \locviewer[\stdface][\stdcochain] = \stdcochain_{\stdface}
    \]
\end{definition}
As with the trickling down theorem, we will be interested in applying Theorem~\ref{thm:main} to the localization link viewer.
We will start by proving that the localization link viewer is indeed a link viewer that respects the non-lazy up-down random walk.
\begin{lemma}
    The localization link viewer is a link viewer.
\end{lemma}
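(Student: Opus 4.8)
The plan is to verify, one by one and in the order they appear in the definition, the five axioms of a link viewer for $\locviewer$, working directly from the defining formula $\locviewer[\stdface][\stdcochain](\genface) = \stdcochain(\stdface \cup \genface)$. The first four are immediate unwindings of this formula. Linearity of $\locviewer[\stdface]$ holds because $(\stdcochain + c\,\gencochain)(\stdface\cup\genface) = \stdcochain(\stdface\cup\genface) + c\,\gencochain(\stdface\cup\genface)$. Preservation of constants holds because $(\ind{})_{\stdface}(\genface) = \ind{}(\stdface\cup\genface) = 1$. For the dimension condition I would observe that if $\stdcochain \in \cochainset{k}{\stdcomplex;\R}$ and $\stdface \in \stdcomplex(i)$, then $\locviewer[\stdface][\stdcochain] = \stdcochain_{\stdface}$ is supported exactly on $\stdcomplex_{\stdface}(k-i-1)$, so $\dim \stdcochain - \dim \locviewer[\stdface][\stdcochain] = i+1$ depends only on $\dim \stdface$; in particular the dimensional difference for vertices is $1$. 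For path-independence I would use the set identity $(\stdcomplex_{\genface})_{\stdface\setminus\genface} = \stdcomplex_{\stdface}$ for $\genface \subseteq \stdface$, together with $\bigl((\stdcochain_{\genface})_{\stdface\setminus\genface}\bigr)(\genface') = \stdcochain_{\genface}\bigl((\stdface\setminus\genface)\cup\genface'\bigr) = \stdcochain\bigl(\stdface\cup\genface'\bigr) = \stdcochain_{\stdface}(\genface')$, which is precisely $\locviewer[\stdface\setminus\genface]\locviewer[\genface] = \locviewer[\stdface]$.

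The substantive step is the inner-product compatibility: $\innerprod{\stdcochain,\gencochain} = \ev{\stdface\in\stdcomplex(i)}{\innerprod{\locviewer[\stdface][\stdcochain],\locviewer[\stdface][\gencochain]}}$ for every dimension $i$, where I write $k = \dim \stdcochain$. I would expand the right-hand side as $\sum_{\stdface\in\stdcomplex(i)}\weight[\stdface]\sum_{\genface\in\stdcomplex_{\stdface}(k-i-1)}\weight[\stdface][\genface]\,\stdcochain(\stdface\cup\genface)\,\gencochain(\stdface\cup\genface)$ and then substitute the link-weight formula $\weight[\stdface][\genface] = \weight[\genface\cup\stdface]\big/\bigl(\binom{k+1}{i+1}\weight[\stdface]\bigr)$, which applies here because $\genface$ has dimension $k-i-1$, so the relevant binomial coefficient is $\binom{i+(k-i-1)+2}{i+1} = \binom{k+1}{i+1}$. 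The factor $\weight[\stdface]$ cancels, leaving $\binom{k+1}{i+1}^{-1}\sum_{\stdface\in\stdcomplex(i)}\sum_{\genface\in\stdcomplex_{\stdface}(k-i-1)}\weight[\genface\cup\stdface]\,\stdcochain(\stdface\cup\genface)\,\gencochain(\stdface\cup\genface)$. I would then re-index this double sum by the top face $\rho = \stdface\cup\genface \in \stdcomplex(k)$: each such $\rho$ arises from exactly $\binom{k+1}{i+1}$ ordered pairs $(\stdface,\genface)$ with $\stdface \in \binom{\rho}{i+1}$ and $\genface = \rho\setminus\stdface$, so the multiplicity cancels the factor $\binom{k+1}{i+1}^{-1}$, and what remains is $\sum_{\rho\in\stdcomplex(k)}\weight[\rho]\,\stdcochain(\rho)\,\gencochain(\rho) = \innerprod{\stdcochain,\gencochain}$.

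I expect the only genuine obstacle to be bookkeeping in this last step: aligning the link-weight normalization $\binom{i+j+2}{i+1}$ (specialized at $j = k-i-1$) with the count of $(i+1)$-element subsets of a $(k+1)$-element face, and keeping careful track of which dimension each localized cochain $\stdcochain_{\stdface}$ lives in. Everything else is formal, and the computation closely parallels the one just carried out above for the restriction link viewer, so I would model the write-up on that proof.
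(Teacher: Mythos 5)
Your proposal is correct and matches the paper's proof essentially step for step: a point-by-point verification of the axioms, with the inner-product compatibility handled exactly as in the paper by substituting the link-weight formula, cancelling $\weightletter(\stdface)$, and re-indexing the double sum by the union face, whose $\binom{k+1}{i+1}$-fold multiplicity cancels the binomial from the link-weight normalization. (Your dimension bookkeeping, giving a difference of $\dim(\stdface)+1$ and hence $1$ for vertices, is in fact the careful version of what the paper asserts.)
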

\begin{proof}
    We will prove the properties point by point:
    \begin{itemize}
        \item $\locviewer[\stdface][\parens{\stdcochain + c \cdot \gencochain}](\genface) = \parens{\stdcochain + c \cdot \gencochain}(\stdface \cup \genface) = \stdcochain(\stdface \cup \genface) + c \cdot \gencochain(\stdface \cup \genface) = \locviewer[\stdface][\stdcochain](\genface) + c \cdot \locviewer[\stdface][\gencochain](\genface)$
        \item $\locviewer[\stdface]\ind{}(\genface) = \ind{}(\stdface \cup \genface) = 1$
        \item For every cochain $\stdcochain$ it holds that $\dim\parens{\locviewer[\stdface][\stdcochain]} = \dim(\stdcochain)-\dim(\stdface)$ therefore for every two cochains $\gencochain, \gencochain'$ and any two $i$-dimensional faces $\genface,\genface'$ it holds that:
        \[
            \dim(\gencochain) - \dim\parens{\locviewer[\genface][\gencochain]} = \dim(\genface) = \dim(\genface') = \dim(\gencochain') - \dim\parens{\locviewer[\genface'][\gencochain']}
        \]
        And $\dimdiff[\locviewer] = 1$.
        \item Let $\genface \subseteq \stdface \in \stdcomplex$ then:
        \[
            \locviewer[\stdface \setminus \genface][\locviewer[\genface][\stdcochain]](\genface')
            = \locviewer[\genface][\stdcochain]\parens{\genface' \cup \parens{\stdface \setminus \genface}}
            = \stdcochain\parens{\genface' \cup \parens{\stdface \setminus \genface} \cup \genface}
            = \stdcochain\parens{\genface' \cup \stdface}
            = \locviewer[\stdface][\stdcochain](\genface')
        \]
        \item For every dimension $i$ and $j=\dim\parens{\viewer[\stdface][\stdcochain]}$ it holds that:
        \begin{multline*}
            \ev{\stdface \in \stdcomplex(i)}{\innerprod{\locviewer[\stdface][\stdcochain], \locviewer[\stdface][\gencochain]}}
             = \sum_{\stdface \in \stdcomplex(i)}{\weight[\stdface]\innerprod{\locviewer[\stdface][\stdcochain], \locviewer[\stdface][\gencochain]}}=\\
             = \sum_{\stdface \in \stdcomplex(i)}{\weight[\stdface]\sum_{\genface \in \stdcomplex_{\stdface}(j)}{\weight[\stdface][\genface]\locviewer[\stdface][\stdcochain](\genface)\locviewer[\stdface][\gencochain](\genface)}} =\\
             = \sum_{\stdface \in \stdcomplex(i)}{\sum_{\genface \in \stdcomplex_{\stdface}(j)}{\frac{\weight[\genface \cup \stdface]}{\binom{i+j+2}{i+1}}\locviewer[\stdface][\stdcochain](\genface)\locviewer[\stdface][\gencochain](\genface)}} =\\
             = \sum_{\stdface \in \stdcomplex(i)}{\sum_{\genface \in \stdcomplex_{\stdface}(j)}{\frac{\weight[\genface \cup \stdface]}{\binom{i+j+2}{i+1}}\stdcochain(\stdface \cup \genface)\gencochain(\stdface \cup \genface)}} = \\
             = \sum_{\genface \in \stdcomplex(i+j+1)}{\weight[\genface]\stdcochain(\genface)\gencochain(\genface)}
             = \innerprod{\stdcochain, \gencochain}
        \end{multline*}
    \end{itemize}
\end{proof}

\begin{lemma}
    The localization link viewer respects the non-lazy random walk operator.
\end{lemma}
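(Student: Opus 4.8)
The plan is to peel off the lazy part and push everything into $\norm{\sldoperator_k\stdcochain}^2$. By the Observation $\nlupoperator[k] = \frac{k+2}{k+1}\upoperator - \frac{1}{k+1}I$ together with $\upoperator[k] = \sldoperator^*_k\sldoperator_k$, one has $\innerprod{\nlupoperator[k]\stdcochain, \stdcochain} = \frac{k+2}{k+1}\norm{\sldoperator_k\stdcochain}^2 - \frac{1}{k+1}\norm{\stdcochain}^2$, and the same identity holds verbatim inside each link $\stdcomplex_{\stdvertex}$ with $k$ replaced by $k-1$; since $\dimdiff[\locviewer] = 1$ and (by the previous lemma) $\ev{\stdvertex \in \stdcomplex(0)}{\norm{\locviewer[\stdvertex][\stdcochain]}^2} = \norm{\stdcochain}^2$, proving that localization respects $\nlupoperator$ is equivalent to the scalar identity
\[
    \frac{k+2}{k+1}\norm{\sldoperator_k\stdcochain}^2 - \frac{1}{k+1}\norm{\stdcochain}^2
    \;=\; \frac{k+1}{k}\ev{\stdvertex \in \stdcomplex(0)}{\norm{\sldoperator_{\stdvertex,k-1}\locviewer[\stdvertex][\stdcochain]}^2} - \frac{1}{k}\norm{\stdcochain}^2 .
\]

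The engine is a pointwise decomposition of the localized cochain $\parens{\sldoperator_k\stdcochain}_{\stdvertex}$. For a vertex $\stdvertex$ and a face $\genface \in \stdcomplex_{\stdvertex}(k)$, the $(k+1)$-subfaces of $\stdvertex \cup \genface$ are the single face $\genface$ (the only one omitting $\stdvertex$) together with the faces $\stdvertex \cup (\genface \setminus \set{\genvertex})$ over $\genvertex \in \genface$; evaluating $\sldoperator_k\stdcochain$ on $\stdvertex \cup \genface$ accordingly and recognizing the second block as $\sldoperator_{\stdvertex,k-1}$ of the localization yields
\[
    \parens{\sldoperator_k\stdcochain}_{\stdvertex} \;=\; \tfrac{1}{k+2}\,\resviewer[\stdvertex][\stdcochain] \;+\; \tfrac{k+1}{k+2}\,\sldoperator_{\stdvertex,k-1}\locviewer[\stdvertex][\stdcochain],
\]
where $\resviewer$ is the restriction link viewer of Section~\ref{sec:trickling-down}. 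Now I would apply the link-viewer inner-product identity at dimension $0$ to the $(k+1)$-cochain $\sldoperator_k\stdcochain$, getting $\norm{\sldoperator_k\stdcochain}^2 = \ev{\stdvertex}{\norm{\parens{\sldoperator_k\stdcochain}_{\stdvertex}}^2}$, expand the square, and evaluate the three resulting averages: $\ev{\stdvertex}{\norm{\resviewer[\stdvertex][\stdcochain]}^2} = \norm{\stdcochain}^2$ because restriction is a link viewer; $\ev{\stdvertex}{\norm{\sldoperator_{\stdvertex,k-1}\locviewer[\stdvertex][\stdcochain]}^2}$ is already the term on the right of the target identity; and the cross term $\ev{\stdvertex}{\innerprod{\resviewer[\stdvertex][\stdcochain], \sldoperator_{\stdvertex,k-1}\locviewer[\stdvertex][\stdcochain]}_{\stdvertex}}$, after rewriting $\weight[\stdvertex]\weight[\stdvertex][\genface] = \tfrac{1}{k+2}\weight[\genface \cup \stdvertex]$ and reindexing the sum by the common top face, collapses to $\tfrac{1}{(k+1)(k+2)}\sum_{\stdface \in \stdcomplex(k+1)}\weight[\stdface]\sum_{\substack{\genvertex, \genvertex' \in \stdface \\ \genvertex \ne \genvertex'}}\stdcochain(\stdface \setminus \set{\genvertex})\stdcochain(\stdface \setminus \set{\genvertex'})$, which is exactly $\innerprod{\nlupoperator[k]\stdcochain, \stdcochain}$ (the quadratic form of the non-lazy walk written over common $(k+1)$-faces). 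Feeding these three evaluations back into the expanded square gives one linear relation among $\norm{\sldoperator_k\stdcochain}^2$, $\norm{\stdcochain}^2$, $\ev{\stdvertex}{\norm{\sldoperator_{\stdvertex,k-1}\locviewer[\stdvertex][\stdcochain]}^2}$ and $\innerprod{\nlupoperator[k]\stdcochain, \stdcochain}$; solving it for $\innerprod{\nlupoperator[k]\stdcochain, \stdcochain}$ reproduces the scalar identity above, hence the theorem.

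The bookkeeping — turning products of link weights into weights of $(k+1)$-faces, and reindexing sums over (vertex, link-$k$-face) pairs as sums over top $(k+1)$-faces equipped with an ordered pair of deleted vertices — is of exactly the type already carried out for $\upoperator[k]$, $\downoperator[k]$, and for checking the restriction link viewer, so I expect it to be routine. The one genuine subtlety, and the reason localization respects the \emph{non-lazy} up-down walk but not the plain up-down walk, is the extra summand $\tfrac{1}{k+2}\resviewer[\stdvertex][\stdcochain]$ in $\parens{\sldoperator_k\stdcochain}_{\stdvertex}$: it is \emph{not} of the form $\sldoperator_{\stdvertex,k-1}\locviewer[\stdvertex][\stdcochain]$ (so the naive hope that $\sldoperator$ commutes with localization fails), it arises precisely from the one subface of $\stdvertex \cup \genface$ that drops $\stdvertex$, and after averaging over $\stdvertex$ it is exactly what turns the lazy coefficient $-\tfrac{1}{k+1}$ into $-\tfrac{1}{k}$. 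Correctly isolating and accounting for this term is the crux of the argument.
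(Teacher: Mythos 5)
Your proof is correct, but it is organized quite differently from the paper's. The paper proves the (more general) identity $\ev{\stdface \in \stdcomplex(i)}{\innerprod{\nlupoperator[j-i-1]\locviewer[\stdface][\stdcochain], \locviewer[\stdface][\gencochain]}} = \innerprod{\nlupoperator[j]\stdcochain, \gencochain}$ for localization to faces of \emph{any} dimension $i$ by one direct computation: expand the local quadratic form, convert link weights to global weights, reindex over the common $(j+1)$-face, count the $\binom{j}{i+1}$ faces $\stdface$ lying in the intersection of the two adjacent $j$-faces, and absorb that count with a binomial identity. You instead treat only the vertex case (which is all the definition of ``respects'' demands), and you route the argument through the factorization $\upoperator = \sldoperator^*_k\sldoperator_k$ together with $\nlupoperator[k] = \frac{k+2}{k+1}\upoperator - \frac{1}{k+1}I$ applied both globally and in each link, reducing everything to the pointwise decomposition
\[
    \parens{\sldoperator_k\stdcochain}_{\stdvertex} = \tfrac{1}{k+2}\,\resviewer[\stdvertex][\stdcochain] + \tfrac{k+1}{k+2}\,\sldoperator_{\stdvertex,k-1}\locviewer[\stdvertex][\stdcochain],
\]
which I checked and which is correct, followed by expanding $\norm{\sldoperator_k\stdcochain}^2 = \ev{\stdvertex\in\stdcomplex(0)}{\norm{\parens{\sldoperator_k\stdcochain}_{\stdvertex}}^2}$ and solving the resulting linear relation; I also verified that this linear relation, combined with the global identity for $\innerprod{\nlupoperator[k]\stdcochain,\stdcochain}$, does reproduce your target scalar identity, so the argument closes. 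What your route buys is a structural explanation of why the \emph{non-lazy} walk is the one respected (the restriction summand is exactly the lazy contribution, and averaging it out shifts $-\frac{1}{k+1}$ to $-\frac{1}{k}$); what it costs is that the heavy lifting has not disappeared: your cross-term evaluation (reindexing vertex--link-face pairs as a $(k+1)$-face with an ordered pair of deleted vertices, and recognizing the non-lazy quadratic form) is essentially the same weight bookkeeping the paper performs directly, your proof additionally leans on the restriction link viewer's inner-product property, and you obtain only the quadratic-form, vertex-localization statement rather than the paper's bilinear identity over $i$-dimensional faces (polarization would recover bilinearity, but the general-$i$ statement would need a separate argument or an induction over vertices).
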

\begin{proof}
    Note that, by definition, it holds that
    \[
        \nlupoperator[k]\stdcochain(\stdface)
        = \frac{1}{k+1}\sum_{\substack{\genface \in \stdcomplex(k)\\\stdface \cup \genface \in \stdcomplex(k+1)}}{\weight[\stdface][\genface \setminus \stdface] \stdcochain(\genface)}
    \]
    The Lemma follows from the following calculation:
    \begin{multline*}
        \ev{\stdface \in \stdcomplex(i)}{\innerprod{\nlupoperator[\stdface, j-i-1] \locviewer[\stdface][\stdcochain], \locviewer[\stdface][\gencochain]}}=\\
        = \sum_{\stdface \in \stdcomplex(i)}{\weight[\stdface]\sum_{\genface \in \stdcomplex_{\stdface}(j-i-1)}{\weight[\stdface][\genface]\nlupoperator[\stdface, j-i-1]\locviewer[\stdface][\stdcochain](\genface)\locviewer[\stdface][\gencochain](\genface)}}=\\
        = \frac{1}{j-i}\sum_{\stdface \in \stdcomplex(i)}{\sum_{\genface \in \stdcomplex_{\stdface}(j-i-1)}{\weight[\stdface]\weight[\stdface][\genface]\sum_{\substack{\genface' \in \stdcomplex_{\stdface}(j-i-1)\\\genface\cup\genface' \in \stdcomplex_{\stdface}(j-i)}}{\weight[\stdface \cup \genface][\genface' \setminus \genface]\locviewer[\stdface][\stdcochain](\genface')\locviewer[\stdface][\gencochain](\genface)}}}=\\
        = \frac{1}{j-i}\sum_{\stdface \in \stdcomplex(i)}{\sum_{\genface \in \stdcomplex_{\stdface}(j-i-1)}{\frac{\weight[\stdface \cup \genface]}{\binom{j+1}{i+1}}\sum_{\substack{\genface' \in \stdcomplex_{\stdface}(j-i-1)\\\genface\cup\genface' \in \stdcomplex_{\stdface}(j-i)}}{\weight[\stdface \cup \genface][\genface' \setminus \genface]\locviewer[\stdface][\stdcochain](\genface')\locviewer[\stdface][\gencochain](\genface)}}}=\\
        = \frac{1}{j-i}\frac{1}{\binom{j+1}{i+1}}\sum_{\stdface \in \stdcomplex(i)}{\sum_{\genface \in \stdcomplex_{\stdface}(j-i-1)}{\sum_{\substack{\genface' \in \stdcomplex_{\stdface}(j-i-1)\\\genface\cup\genface' \in \stdcomplex_{\stdface}(j-i)}}{\weight[\stdface \cup \genface]\weight[\stdface \cup \genface][\genface' \setminus \genface]\locviewer[\stdface][\stdcochain](\genface')\locviewer[\stdface][\gencochain](\genface)}}}=\\
        = \frac{1}{j-i}\frac{1}{\binom{j+1}{i+1}}\sum_{\stdface \in \stdcomplex(i)}{\sum_{\genface \in \stdcomplex_{\stdface}(j-i-1)}{\sum_{\substack{\genface' \in \stdcomplex_{\stdface}(j-i-1)\\\genface\cup\genface' \in \stdcomplex_{\stdface}(j-i)}}{\weight[\stdface \cup \genface]\weight[\stdface \cup \genface][\genface' \setminus \genface]\stdcochain(\stdface \cup \genface')\gencochain(\stdface \cup \genface)}}}=\\
        = \frac{1}{j-i}\frac{1}{\binom{j+1}{i+1}}\sum_{\stdface \in \stdcomplex(i)}{\sum_{\substack{\genface \in \stdcomplex(j)\\\stdface \subseteq \genface}}{\sum_{\substack{\genface' \in \stdcomplex(j)\\\genface\cup\genface' \in \stdcomplex_{\stdface}(j+1) \\ \stdface \subseteq \genface'}}{\weight[\genface]\weight[\genface][\genface' \setminus \genface]\stdcochain(\genface')\gencochain(\genface)}}}\underset{(*)}{=}\\
        \underset{(*)}{=} \frac{1}{j+1}\frac{1}{\binom{j}{i+1}}\sum_{\stdface \in \stdcomplex(i)}{\sum_{\substack{\genface \in \stdcomplex(j)\\\stdface \subseteq \genface}}{\sum_{\substack{\genface' \in \stdcomplex(j)\\\genface\cup\genface' \in \stdcomplex_{\stdface}(j+1) \\ \stdface \subseteq \genface'}}{\weight[\genface]\weight[\genface][\genface' \setminus \genface]\stdcochain(\genface')\gencochain(\genface)}}}=\\
        = \frac{1}{j+1}\frac{1}{\binom{j}{i+1}}\sum_{\genface \in \stdcomplex(j)}{\sum_{\substack{\genface' \in \stdcomplex(j)\\\ \genface \cup \genface' \in \stdcomplex(j+1)}}{\sum_{\substack{\stdface \in \stdcomplex(i)\\ \stdface \subseteq \genface \cap \genface'}}{\weight[\genface]\weight[\genface][\genface' \setminus \genface]\stdcochain(\genface')\gencochain(\genface)}}}=\\
        = \frac{1}{j+1}\sum_{\genface \in \stdcomplex(j)}{\weight[\genface] \gencochain(\genface) \sum_{\substack{\genface' \in \stdcomplex(j)\\\ \genface \cup \genface' \in \stdcomplex(j+1)}}{\weight[\genface][\genface' \setminus \genface]\stdcochain(\genface')}}=\\
        = \frac{1}{j+1}\sum_{\genface \in \stdcomplex(j)}{\weight[\genface] \gencochain(\genface) \nlupoperator[\stdface, j] \stdcochain(\genface)}
        =\innerprod{\nlupoperator[\stdface, j-i-1] \stdcochain, \gencochain}
    \end{multline*}
    Where $(*)$ follows from:
    \[
        \frac{1}{j-i}\frac{1}{\binom{j+1}{j-i}}
        = \frac{1}{j-i} \frac{(j-i)!(i+1)!}{(j+1)!}
        = \frac{1}{j+1} \frac{(j-(i+1))!(i+1)!}{j!}
        = \frac{1}{j+1}\frac{1}{\binom{j}{i+1}}
    \]
\end{proof}

\subsection{Gaining the Advantage}\label{subsec:cochains-the-signless-differential-and-the-links}
Before we present the exact Lemma we use as the advantage step we should expand our understanding of $\locviewer$'s level functions.
We will begin by characterising the space of cochains that are orthogonal to the eigenspace of $1$ (i.e.\ the constants).
\begin{lemma}
    Let $\stdcomplex$ be a pure $d$-dimensional simplicial complex and let $\stdcochain \in \cochainset{k}{\stdcochain;\R}$ be a cochain.
    Then:
    \[
        \innerprod{\stdcochain, \mathbbm{1}} = 0 \Leftrightarrow \stdcochain \in \ker{\parens{\sldoperator^*_{-1}\cdots \sldoperator^*_{k-1}}}
    \]
\end{lemma}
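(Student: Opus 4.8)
The plan is to reduce this equivalence to the explicit description of iterated adjoint signless differentials provided by Lemma~\ref{lem:multi-sld-operator-adj}, combined with the elementary fact that $\cochainset{-1}{\stdcomplex;\R}$ is one dimensional (there is a single $(-1)$-dimensional face). No genuinely new idea is needed beyond matching indices correctly.

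First I would instantiate Lemma~\ref{lem:multi-sld-operator-adj} with $i = -1$ and $j = k$, applied to $\stdcochain \in \cochainset{k}{\stdcomplex;\R}$. Since $\stdcomplex(-1) = \set{\emptyset}$, $\stdcomplex_{\emptyset} = \stdcomplex$ and $\stdcochain_{\emptyset} = \stdcochain$, evaluating the resulting $(-1)$-cochain at the empty face gives
\[
    \sldoperator^*_{-1}\cdots \sldoperator^*_{k-1}\stdcochain(\emptyset) = \ev{\genface \in \stdcomplex(k)}{\stdcochain(\genface)} = \sum_{\genface \in \stdcomplex(k)}{\weight[\genface]\stdcochain(\genface)} = \innerprod{\stdcochain, \ind{}},
\]
where the last equality is just the definition of the inner product together with $\ind{}(\genface) = 1$ for every $\genface \in \stdcomplex(k)$. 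Since $\cochainset{-1}{\stdcomplex;\R}$ has a single coordinate, namely the value on $\emptyset$, a cochain in it is the zero cochain precisely when that value vanishes. Applying this to $\sldoperator^*_{-1}\cdots \sldoperator^*_{k-1}\stdcochain$ and chaining with the displayed identity yields
\[
    \stdcochain \in \ker\parens{\sldoperator^*_{-1}\cdots \sldoperator^*_{k-1}} \iff \sldoperator^*_{-1}\cdots \sldoperator^*_{k-1}\stdcochain(\emptyset) = 0 \iff \innerprod{\stdcochain, \ind{}} = 0,
\]
which is the claim.

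I do not expect a real obstacle here; the only points requiring a moment of care are that the index choice $i=-1$, $j=k$ genuinely falls within the inductive range of Lemma~\ref{lem:multi-sld-operator-adj} (the base case corresponding to $k=0$, and the empty composition when $k=-1$ behaving as the identity), and that $\weight[\emptyset] = 1 \ne 0$, so $\cochainset{-1}{\stdcomplex;\R}$ carries a nondegenerate inner product. If one preferred to avoid Lemma~\ref{lem:multi-sld-operator-adj}, an equally short route is available: one checks directly from the definition of $\sldoperator$ that $\sldoperator_{k-1}\cdots \sldoperator_{-1}$ sends the $(-1)$-cochain of value $1$ to $\ind{}$ (the signless differential of a constant cochain is the same constant), so, taking adjoints of a composition in reverse order, $\ker\parens{\sldoperator^*_{-1}\cdots \sldoperator^*_{k-1}} = \im\parens{\sldoperator_{k-1}\cdots \sldoperator_{-1}}^{\perp} = \parens{\R\cdot\ind{}}^{\perp} = \set{\stdcochain \suchthat \innerprod{\stdcochain,\ind{}}=0}$.
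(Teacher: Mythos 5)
Your proposal is correct and follows essentially the same route as the paper: both invoke Lemma~\ref{lem:multi-sld-operator-adj} with $i=-1$, $j=k$ to identify $\sldoperator^*_{-1}\cdots\sldoperator^*_{k-1}\stdcochain(\emptyset)$ with $\ev{\genface \in \stdcomplex(k)}{\stdcochain(\genface)} = \innerprod{\stdcochain,\ind{}}$, and then use that a $(-1)$-dimensional cochain vanishes iff its value on $\emptyset$ does. Your added remarks on the index range and the alternative argument via $\ker\parens{\sldoperator^*_{-1}\cdots\sldoperator^*_{k-1}} = \im\parens{\sldoperator_{k-1}\cdots\sldoperator_{-1}}^{\perp}$ are fine but not needed beyond what the paper does.
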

\begin{proof}
    Note that, due to Lemma~\ref{lem:multi-sld-operator-adj} it holds that:
    \[
        \innerprod{\stdcochain, \mathbbm{1}} = \ev{\stdface \in \stdcomplex(k)}{\stdcochain(\stdface)} = \sldoperator^*_{-1}\cdots \sldoperator^*_{k-1}(\emptyset)
    \]
    Which proves the lemma.
\end{proof}
Now that we understand the constant part of a cochain we are ready to move on to understanding cochains of a higher level.
\begin{lemma}\label{lem:links-of-cochains-from-high-dimensions}
    Let $\stdcomplex$ be a pure $d$-dimensional simplicial complex, $i$ be a dimension and $\stdcochain \in \cochainset{k}{\stdcomplex;\R}$ be a cochain.
    Then:
    \[
        \forall \stdface \in \stdcomplex(i): \innerprod{\locviewer[\stdface][\stdcochain], \mathbbm{1}}_{\stdface} = 0 \Leftrightarrow \stdcochain \in \ker{\parens{\sldoperator^*_{i}\cdots \sldoperator^*_{k-1}}}
    \]
\end{lemma}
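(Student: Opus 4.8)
The plan is to reduce the whole statement to a single application of Lemma~\ref{lem:multi-sld-operator-adj}, exactly mirroring the proof of the preceding lemma (which is the special case $i=-1$, $\stdface=\emptyset$). The starting observation is that, for an $i$-dimensional face $\stdface$, the local quantity $\innerprod{\locviewer[\stdface][\stdcochain], \mathbbm{1}}_{\stdface}$ is simply an average: unfolding the definition of the inner product on the link $\stdcomplex_{\stdface}$ and using that $\weightletter_{\stdface}$ is a probability distribution on $\stdcomplex_{\stdface}(k-i-1)$ gives
\[
    \innerprod{\locviewer[\stdface][\stdcochain], \mathbbm{1}}_{\stdface} = \ev{\genface \in \stdcomplex_{\stdface}(k-i-1)}{\locviewer[\stdface][\stdcochain](\genface)} = \ev{\genface \in \stdcomplex_{\stdface}(k-i-1)}{\stdcochain_{\stdface}(\genface)}.
\]

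Next I would apply Lemma~\ref{lem:multi-sld-operator-adj} to the $k$-dimensional cochain $\stdcochain$ (taking $j=k$ there), which yields, for every $\stdface \in \stdcomplex(i)$,
\[
    \parens{\sldoperator^{*}_{i}\cdots \sldoperator^{*}_{k-1}\stdcochain}(\stdface) = \ev{\genface \in \stdcomplex_{\stdface}(k-i-1)}{\stdcochain_{\stdface}(\genface)}.
\]
Comparing the two displays shows $\innerprod{\locviewer[\stdface][\stdcochain], \mathbbm{1}}_{\stdface} = \parens{\sldoperator^{*}_{i}\cdots \sldoperator^{*}_{k-1}\stdcochain}(\stdface)$ for every $\stdface \in \stdcomplex(i)$. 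Since $\sldoperator^{*}_{i}\cdots \sldoperator^{*}_{k-1}\stdcochain$ is a cochain in $\cochainset{i}{\stdcomplex;\R}$, it is the zero cochain precisely when it vanishes on every face of $\stdcomplex(i)$; hence $\stdcochain \in \ker\parens{\sldoperator^{*}_{i}\cdots \sldoperator^{*}_{k-1}}$ if and only if $\innerprod{\locviewer[\stdface][\stdcochain], \mathbbm{1}}_{\stdface} = 0$ for all $\stdface \in \stdcomplex(i)$, which is the claim.

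I do not expect a real obstacle here; the only point requiring care is dimension bookkeeping — that $\sldoperator^{*}_{i}\cdots \sldoperator^{*}_{k-1}$ sends $k$-dimensional cochains to $i$-dimensional ones, that the link faces $\genface \in \stdcomplex_{\stdface}(k-i-1)$ correspond bijectively to the $k$-dimensional faces of $\stdcomplex$ containing $\stdface$, and that the degenerate endpoints (e.g.\ $i=k$, where the composition of adjoints is empty and reads as the identity) stay consistent with the averaging interpretation above.
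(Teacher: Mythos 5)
Your proposal is correct and follows essentially the same route as the paper: both identify $\innerprod{\locviewer[\stdface][\stdcochain], \mathbbm{1}}_{\stdface}$ with $\sldoperator^*_{i}\cdots\sldoperator^*_{k-1}\stdcochain(\stdface)$ via Lemma~\ref{lem:multi-sld-operator-adj} and then read off the kernel condition pointwise over $\stdcomplex(i)$. The only (harmless) difference is that you apply Lemma~\ref{lem:multi-sld-operator-adj} directly in the ambient complex with $j=k$, whereas the paper's proof passes through the link and commutes the adjoints with localization via Lemma~\ref{lem:down-operator-and-localisation}.
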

\begin{proof}
    Using Lemma~\ref{lem:multi-sld-operator-adj} we prove that:
    \begin{align*}
        \forall \stdface \in \stdcomplex(i): \innerprod{\locviewer[\stdface][\stdcochain], \mathbbm{1}}_{\stdface}
        & = \sum_{\genface \in \stdcomplex_{\stdface}(0)}{\weight[\stdface][\genface] \locviewer[\stdface][\stdcochain](\genface)}
        = \sldoperator^{*}_{\stdface, i-1} \cdots \sldoperator^{*}_{\stdface, k-2}\stdcochain_{\stdface}(\emptyset)
        = \parens{\sldoperator^{*}_{i} \cdots \sldoperator^{*}_{k-1}\stdcochain}_{\stdface}(\emptyset)\\
        &= \sldoperator^{*}_{i} \cdots \sldoperator^{*}_{k-1}\stdcochain(\stdface)
    \end{align*}
    Which proves the Lemma.
\end{proof}
\begin{corollary}
    It holds that $\stdcochain$ is a proper $k$-dimensional $i$-level cochain iff $\stdcochain \in \im{\parens{\sldoperator_{k-1} \cdots \sldoperator_i}} \cap \ker{\parens{\sldoperator^*_{i-1}\cdots \sldoperator^*_{k-1}}}$.
\end{corollary}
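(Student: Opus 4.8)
The plan is to reduce the statement to Lemma~\ref{lem:links-of-cochains-from-high-dimensions}, which already rewrites the defining local-average condition of level cochains as a kernel condition, together with the elementary fact that on a finite-dimensional inner product space $(\ker T)^{\perp} = \im(T^{*})$ for any linear operator $T$.

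First I would unwind the definition: a proper $k$-dimensional $i$-level cochain is a cochain $\stdcochain \in \cochainset{k}{\stdcomplex;\R}$ that is (a) an $i$-level cochain, i.e.\ $\innerprod{\locviewer[\stdface][\stdcochain], \ind{}}_{\stdface} = 0$ for every $\stdface \in \stdcomplex(i-1)$, and (b) orthogonal to every $(i+1)$-level cochain. For (a), I would apply Lemma~\ref{lem:links-of-cochains-from-high-dimensions} with its dimension parameter set to $i-1$: this shows that (a) holds if and only if $\stdcochain \in \ker\parens{\sldoperator^*_{i-1}\cdots\sldoperator^*_{k-1}}$. When $i=0$ this is just the condition $\innerprod{\stdcochain,\ind{}}=0$, matching the preceding lemma.

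Next I would pin down the space of $(i+1)$-level cochains. Its defining condition is $\innerprod{\locviewer[\stdface][\stdcochain], \ind{}}_{\stdface}=0$ for every $\stdface \in \stdcomplex(i)$, so Lemma~\ref{lem:links-of-cochains-from-high-dimensions} (now with dimension parameter $i$) identifies the space of $k$-dimensional $(i+1)$-level cochains with $\ker\parens{\sldoperator^*_{i}\cdots\sldoperator^*_{k-1}}$. The composite $\sldoperator^*_{i}\cdots\sldoperator^*_{k-1}$ maps $\cochainset{k}{\stdcomplex;\R}$ to $\cochainset{i}{\stdcomplex;\R}$, and since taking adjoints reverses the order of composition, its adjoint is $\sldoperator_{k-1}\cdots\sldoperator_{i}$. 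Hence the orthogonal complement inside $\cochainset{k}{\stdcomplex;\R}$ of the $(i+1)$-level cochains is $\im\parens{\sldoperator_{k-1}\cdots\sldoperator_{i}}$, by the identity $(\ker T)^{\perp}=\im(T^{*})$ applied to $T = \sldoperator^*_{i}\cdots\sldoperator^*_{k-1}$.

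Combining the two observations, (a) together with (b) says exactly that $\stdcochain \in \ker\parens{\sldoperator^*_{i-1}\cdots\sldoperator^*_{k-1}} \cap \im\parens{\sldoperator_{k-1}\cdots\sldoperator_{i}}$, and conversely any such $\stdcochain$ satisfies both (a) and (b), since each is a biconditional; so the two descriptions coincide, which is the asserted characterization. I do not expect a genuine obstacle here, as all the substance sits in Lemma~\ref{lem:links-of-cochains-from-high-dimensions}. The only points needing a moment of care are bookkeeping: invoking the lemma at two different dimension indices ($i-1$ for the level condition on $\stdcochain$ itself, $i$ for the space it must be orthogonal to), checking that the adjoint of the composite down-type operator is the composite up-type operator taken in the correct order, and noting that all the relevant kernels, images, and orthogonal complements live in finite-dimensional cochain spaces so that $(\ker T)^{\perp}=\im(T^{*})$ applies without qualification.
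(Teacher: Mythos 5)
Your proposal is correct and follows the same route the paper intends: its proof is just the one-line citation of the definition of (proper) $i$-level cochains together with Lemma~\ref{lem:links-of-cochains-from-high-dimensions}, applied exactly as you do at dimensions $i-1$ and $i$. The only difference is that you explicitly spell out the step the paper leaves implicit, namely that orthogonality to $\ker\parens{\sldoperator^*_{i}\cdots\sldoperator^*_{k-1}}$ is the same as membership in $\im\parens{\sldoperator_{k-1}\cdots\sldoperator_{i}}$ via $(\ker T)^{\perp}=\im(T^{*})$ in the finite-dimensional weighted inner product space, which is a faithful filling-in rather than a different argument.
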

\begin{proof}
    The Corollary holds due the definition of $i$-level cochains and Lemma~\ref{lem:links-of-cochains-from-high-dimensions}.
\end{proof}
Note that cochains that pure $i$-level cochains can be thought of as originating in the $i$-dimensional faces.
For example, for every pure $0$-level cochain there is a $0$-dimensional cochain $\gencochain$ such that $\stdcochain = \sldoperator\cdots\sldoperator\gencochain$.
We also note that any cochain that is not originated in the vertices can be distributed along the links in the sense that they remain orthogonal to the constants when applying the localization link viewer.
We will therefore be interested in the cochains that originated in the vertices (as these are exactly the cochains which the local perspective seems to miss).
Consider the following:
\begin{lemma}\label{lem:from-k-to-0}
Let $\stdcochain$ be a $k$-dimensional proper $0$-level cochain then there exists $\stdcochain^{=0} \in \cochainset{0}{\stdcomplex; \R}$ such that:
\begin{enumerate}
    \item $\sldoperator^*_{-1}\stdcochain^{=0} = 0$
    \item $\norm{\stdcochain}^2 = \norm{\stdcochain^{=0}}^2$
    \item $\norm{\sldoperator^*_{0} \cdots \sldoperator^*_{k-1}\stdcochain}^2 = \norm{\sldoperator_{k-1} \cdots \sldoperator_0 \stdcochain^{=0}}^2$
\end{enumerate}
\end{lemma}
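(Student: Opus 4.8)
The plan is to read off $\stdcochain^{=0}$ from a singular value decomposition of the ``lift'' operator $D := \sldoperator_{k-1}\cdots\sldoperator_0 \colon \cochainset{0}{\stdcomplex;\R} \to \cochainset{k}{\stdcomplex;\R}$, whose adjoint is $D^* = \sldoperator^*_0\cdots\sldoperator^*_{k-1}$. Two facts drive everything: constants are fixed in both directions, namely $D\mathbbm{1} = \mathbbm{1}$ (each $\sldoperator_j$ averages) and $D^*\mathbbm{1} = \mathbbm{1}$ (by the explicit form of $\sldoperator^*_j$ in Lemma~\ref{lem:sld-adj-explicit}), so that $\innerprod{D\gencochain, \mathbbm{1}} = \innerprod{\gencochain, \mathbbm{1}}$ for every $\gencochain$.

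First I would identify the two spaces at play. Let $W := \set{\gencochain \in \cochainset{0}{\stdcomplex;\R} \suchthat \innerprod{\gencochain,\mathbbm{1}} = 0}$ be the space of $0$-level $0$-dimensional cochains, and let $V \subseteq \cochainset{k}{\stdcomplex;\R}$ be the space of proper $0$-level $k$-dimensional cochains. By the Corollary above together with Lemma~\ref{lem:links-of-cochains-from-high-dimensions}, $V = \im D \cap \ker(\sldoperator^*_{-1}\cdots\sldoperator^*_{k-1}) = \im D \cap \mathbbm{1}^{\perp}$. Decomposing $\cochainset{0}{\stdcomplex;\R} = \R\mathbbm{1} \oplus W$ and using $D\mathbbm{1}=\mathbbm{1}$ together with $\innerprod{D\gencochain,\mathbbm{1}}=\innerprod{\gencochain,\mathbbm{1}}$ gives $\im D = \R\mathbbm{1}\oplus D(W)$ with $D(W)\subseteq\mathbbm{1}^{\perp}$; intersecting with $\mathbbm{1}^{\perp}$ then yields $V = D(W)$.

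Next I would build the decomposition. Put $W' := W \ominus \ker(D|_W)$, so $T := D|_{W'} \colon W' \to V$ is a linear isomorphism (injective by construction; surjective since $V = D(W)$ and $D$ kills $\ker(D|_W)$). A two-line check shows $D^*$ maps $V$ into $W'$: for $v\in V$ one has $\innerprod{D^*v,\mathbbm{1}} = \innerprod{v,\mathbbm{1}} = 0$, and $\innerprod{D^*v,\gencochain} = \innerprod{v, D\gencochain} = 0$ for every $\gencochain\in\ker(D|_W)$; moreover $D^*|_V = T^*$. Diagonalising the positive-definite self-adjoint operator $T^*T$ on $W'$, fix an orthonormal eigenbasis $\gencochain_1,\dots,\gencochain_m$ of $W'$ with eigenvalues $\sigma_1^2,\dots,\sigma_m^2 > 0$, and set $\stdcochain^{(\ell)} := T\gencochain_\ell/\sigma_\ell$; then $\stdcochain^{(1)},\dots,\stdcochain^{(m)}$ is an orthonormal basis of $V$ and $D^*\stdcochain^{(\ell)} = T^*\stdcochain^{(\ell)} = \sigma_\ell\gencochain_\ell$.

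Finally, writing $\stdcochain = \sum_\ell a_\ell\stdcochain^{(\ell)}$, I would define $\stdcochain^{=0} := \sum_\ell a_\ell\gencochain_\ell \in W'\subseteq W$. Then item (1) holds since $\sldoperator^*_{-1}\stdcochain^{=0} = \innerprod{\stdcochain^{=0},\mathbbm{1}} = 0$; item (2) since $\norm{\stdcochain^{=0}}^2 = \sum_\ell a_\ell^2 = \norm{\stdcochain}^2$ by orthonormality of the two bases; and item (3) since $\sldoperator^*_0\cdots\sldoperator^*_{k-1}\stdcochain = D^*\stdcochain = \sum_\ell a_\ell\sigma_\ell\gencochain_\ell$ and $\sldoperator_{k-1}\cdots\sldoperator_0\stdcochain^{=0} = D\stdcochain^{=0} = \sum_\ell a_\ell\sigma_\ell\stdcochain^{(\ell)}$ both have squared norm $\sum_\ell a_\ell^2\sigma_\ell^2$. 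The only nonformal point is the SVD step: one must actually quotient out $\ker(D|_W)$ so that the $\sigma_\ell$ are strictly positive, and one must verify that $D^*$ restricted to $V$ lands in $W'$ and coincides with $T^*$ there --- this is precisely what yields the identity $D^*\stdcochain^{(\ell)} = \sigma_\ell\gencochain_\ell$, the hinge that lets the \emph{same} rescaling serve items (2) and (3) simultaneously. Everything else reduces to the identifications $V = D(W)$ and $D\mathbbm{1} = D^*\mathbbm{1} = \mathbbm{1}$ together with routine linear algebra.
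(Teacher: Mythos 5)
Your proposal is correct: every step checks out, including the identification $V=D(W)$ for $D=\sldoperator_{k-1}\cdots\sldoperator_0$, the fact that $D^{*}$ carries $V$ into $W'$ and agrees there with $T^{*}$, and the final norm computations. It is, however, the same underlying construction as the paper's, just executed in an explicit basis: the paper simply picks a preimage $\gencochain$ with $\stdcochain=\sldoperator_{k-1}\cdots\sldoperator_0\gencochain$ (using that a proper $0$-level cochain lies in $\im\parens{\sldoperator_{k-1}\cdots\sldoperator_0}$) and sets $\stdcochain^{=0}=\sqrt{\upoperator[k][0]}\,\gencochain$, after which all three items follow from three short inner-product manipulations with the self-adjoint square root of $\upoperator[k][0]=D^{*}D$ --- no kernel bookkeeping, no identification of the proper $0$-level space, no bases. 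Your singular value decomposition of $D$ produces literally the same cochain $\stdcochain^{=0}$ (both are the ``rotate back by the partial isometry'' half of the polar decomposition of $D$), so the trade-off is purely expository: your route is more elementary in that it only needs diagonalization of the positive-definite operator $T^{*}T$ rather than functional calculus, and it makes explicit the structural facts the paper leaves implicit (that $V=D(W)$ and that the singular vectors match up), at the cost of the extra step of quotienting out $\ker(D|_W)$ so that all $\sigma_{\ell}>0$; the paper's square-root argument is shorter precisely because $\sqrt{D^{*}D}$ automatically kills that kernel and fixes constants, so items (1)--(3) reduce to the single identity $\norm{Dg}=\norm{\sqrt{D^{*}D}\,g}$ applied twice.
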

\begin{proof}
    $\stdcochain \in \im{\parens{\sldoperator_{k-1} \cdots \sldoperator_0}}$ therefore let $\gencochain \in \cochainset{0}{\stdcomplex;\R}$ such that $\stdcochain = \sldoperator_{k-1} \cdots \sldoperator_0 \gencochain$.
    Note that $\upoperator[k][0]$ is a self adjoint positive semidefinate operator and thus its square root can be defined.
    $\sqrt{\upoperator[k][0]}$ is defined as the operator whose eigenvectors are the same as $\upoperator[k][0]$ and whose eigenvalue are the positive square root of the eigenvalues of $\upoperator[k][0]$.
    Note that $\sqrt{\upoperator[k][0]}$ is also self adjoint.
    We will now show that $\stdcochain^{=0} = \sqrt{\upoperator[k][0]}\gencochain$ satisfies all of the conditions of the lemma.
    \begin{enumerate}
        \item Note that this condition is equivalent to showing that $\innerprod{\stdcochain^{=0}, \mathbbm{1}} = 0$.
        Also note that $\mathbbm{1}$ is an eigenvector of $\sqrt{\upoperator[k][0]}$ with eigenvalue $1$.
        It is therefore not hard to see that:
        \begin{multline*}
            \innerprod{\stdcochain^{=0}, \mathbbm{1}}
            = \innerprod{\sqrt{\upoperator[k][0]}\gencochain, \sqrt{\upoperator[k][0]} \mathbbm{1}}
            = \innerprod{\upoperator[k][0]\gencochain, \mathbbm{1}}=\\
            = \innerprod{\sldoperator_{k-1}\cdots \sldoperator_{0}\gencochain, \sldoperator^*_{0}\cdots \sldoperator^{*}_{k-1}\mathbbm{1}}
            = \innerprod{\stdcochain, \mathbbm{1}}
            = 0
        \end{multline*}
        \item Consider the following:
        \begin{multline*}
            \norm{\stdcochain^{=0}}^2
            = \innerprod{\stdcochain^{=0}, \stdcochain^{=0}}
            = \innerprod{\sqrt{\upoperator[k][0]}\gencochain, \sqrt{\upoperator[k][0]}\gencochain}
            = \innerprod{\upoperator[k][0]\gencochain, \gencochain}=\\
            = \innerprod{\sldoperator_{k-1} \cdots \sldoperator_{0} \gencochain, \sldoperator_{k-1} \cdots \sldoperator_{0} \gencochain}
            = \innerprod{\stdcochain, \stdcochain}
            = \norm{\stdcochain}^2
        \end{multline*}
        \item To conclude, note that:
        \begin{align*}
            \norm{\sldoperator^*_{0} \cdots \sldoperator^*_{k-1}\stdcochain}^2
            & = \innerprod{\sldoperator^*_{0} \cdots \sldoperator^*_{k-1}\stdcochain, \sldoperator^*_{0} \cdots \sldoperator^*_{k-1}\stdcochain}\\
            & = \innerprod{\sldoperator^*_{0} \cdots \sldoperator^*_{k-1}\sldoperator_{k-1} \cdots \sldoperator_0 \gencochain, \sldoperator^*_{0} \cdots \sldoperator^*_{k-1}\sldoperator_{k-1} \cdots \sldoperator_0 \gencochain}\\
            & = \innerprod{\upoperator[k][0] \gencochain, \upoperator[k][0] \gencochain}\\
            & = \innerprod{\sqrt{\upoperator[k][0]}\sqrt{\upoperator[k][0]} \gencochain, \sqrt{\upoperator[k][0]}\sqrt{\upoperator[k][0]} \gencochain}\\
            & =\innerprod{\sqrt{\upoperator[k][0]}\stdcochain^{=0}, \sqrt{\upoperator[k][0]}\stdcochain^{=0}}\\
            & = \innerprod{\upoperator[k][0]\stdcochain^{=0}, \stdcochain^{=0}}\\
            & = \innerprod{\sldoperator_{k-1} \cdots \sldoperator_0 \stdcochain^{=0}, \sldoperator_{k-1} \cdots \sldoperator_0 \stdcochain^{=0}}\\
            & = \norm{\sldoperator_{k-1} \cdots \sldoperator_0 \stdcochain^{=0}}^2
        \end{align*}
    \end{enumerate}
\end{proof}
We are now ready to present the advantage we use:
\begin{lemma}[The advantage]\label{lem:random-walk-advantage}
    Let $\stdcomplex$ be a $d$ dimensional simplicial complex whose $1$-skeleton is a $\gamma$ spectral expander.
    Also let $\stdcochain \in \levelcochainset{\locviewer,0}{k}{\stdcomplex;\R}$ then:
    \[
        \norm{\sldoperator^*_0 \cdots \sldoperator^*_{k-1}\stdcochain}^2 \le \parens{1-\frac{k}{k+1}\parens{1-\gamma}}\norm{\stdcochain}^2
    \]
\end{lemma}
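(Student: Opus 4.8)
The plan is to transfer the statement to the non-lazy walk on the $1$-skeleton and then feed in the two structural results already proved. The key observation is that $\sldoperator^*_0\cdots\sldoperator^*_{k-1}\stdcochain$ records the part of $\stdcochain$ coming from the vertices, and Lemma~\ref{lem:from-k-to-0} lets us replace $\stdcochain$ by an honest $0$-dimensional cochain without changing norms; after that, $\norm{\sldoperator^*_0\cdots\sldoperator^*_{k-1}\stdcochain}^2$ becomes a quadratic form of $\upoperator[k][0]$, which Lemma~\ref{lem:nl-random-walk-and-i-up-operator} identifies with an explicit convex combination of $\nlupoperator[0]$ and the identity, at which point the expansion hypothesis finishes the job.

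First I would reduce to the case where $\stdcochain$ is a \emph{proper} $0$-level cochain. Write $\stdcochain = \stdcochain' + \stdcochain''$ with $\stdcochain'$ proper $0$-level and $\stdcochain''$ a $1$-level cochain; this is possible because $1$-level cochains sit inside the $0$-level cochains (Lemma~\ref{lem:containment-of-i-level-cochains}) and the proper $0$-level cochains are exactly their orthogonal complement inside the $0$-level cochains. By Lemma~\ref{lem:links-of-cochains-from-high-dimensions}, a $1$-level cochain lies in $\ker\parens{\sldoperator^*_0\cdots\sldoperator^*_{k-1}}$, so $\sldoperator^*_0\cdots\sldoperator^*_{k-1}\stdcochain = \sldoperator^*_0\cdots\sldoperator^*_{k-1}\stdcochain'$; since $\norm{\stdcochain'}^2\le\norm{\stdcochain}^2$ and the target coefficient $1-\frac{k}{k+1}\parens{1-\gamma}$ is nonnegative, proving the inequality for $\stdcochain'$ proves it for $\stdcochain$ (and $k=0$ is trivial, the composition being empty and the coefficient $1$).

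For $\stdcochain$ proper $0$-level, I would apply Lemma~\ref{lem:from-k-to-0} to obtain $\stdcochain^{=0}\in\cochainset{0}{\stdcomplex;\R}$ with $\innerprod{\stdcochain^{=0},\mathbbm{1}}=0$, $\norm{\stdcochain^{=0}}^2=\norm{\stdcochain}^2$, and $\norm{\sldoperator^*_0\cdots\sldoperator^*_{k-1}\stdcochain}^2=\norm{\sldoperator_{k-1}\cdots\sldoperator_0\stdcochain^{=0}}^2$. Passing the $\sldoperator_j$'s to the other side of the inner product by adjointness rewrites the right-hand side as $\innerprod{\upoperator[k][0]\stdcochain^{=0},\stdcochain^{=0}}$, and Lemma~\ref{lem:nl-random-walk-and-i-up-operator} with $i=k$ gives $\upoperator[k][0]=\frac{k}{k+1}\nlupoperator[0]+\frac{1}{k+1}I$, hence this equals $\frac{k}{k+1}\innerprod{\nlupoperator[0]\stdcochain^{=0},\stdcochain^{=0}}+\frac{1}{k+1}\norm{\stdcochain^{=0}}^2$. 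Since the $1$-skeleton is a $\gamma$-spectral expander, $\nlupoperator[0]$ is self-adjoint with the constants as its top ($=1$) eigenspace and all other eigenvalues at most $\gamma$, so $\innerprod{\nlupoperator[0]\stdcochain^{=0},\stdcochain^{=0}}\le\gamma\norm{\stdcochain^{=0}}^2$ because $\stdcochain^{=0}\perp\mathbbm{1}$. Substituting, using $\frac{k\gamma+1}{k+1}=1-\frac{k}{k+1}\parens{1-\gamma}$ and $\norm{\stdcochain^{=0}}^2=\norm{\stdcochain}^2$, gives the claimed bound.

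I do not expect any genuine obstacle: the proof is essentially the composition of Lemma~\ref{lem:from-k-to-0} and Lemma~\ref{lem:nl-random-walk-and-i-up-operator} together with elementary algebra. The one place that requires a moment's care is the reduction in the second paragraph, since Lemma~\ref{lem:from-k-to-0} is stated only for \emph{proper} $0$-level cochains whereas the lemma at hand concerns arbitrary $0$-level cochains; everything downstream of that reduction is routine.
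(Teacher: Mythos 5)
Your proof is correct and takes essentially the same route as the paper's: project onto the proper $0$-level part (the $1$-level complement lying in $\ker\parens{\sldoperator^*_0\cdots\sldoperator^*_{k-1}}$ by Lemma~\ref{lem:links-of-cochains-from-high-dimensions}), pass to a genuine $0$-dimensional cochain via Lemma~\ref{lem:from-k-to-0}, rewrite the norm as the quadratic form of $\upoperator[k][0]$, and finish with Lemma~\ref{lem:nl-random-walk-and-i-up-operator} and the spectral gap of the $1$-skeleton. Your explicit justification of the reduction step (and the $k=0$ edge case) is, if anything, a bit more careful than the paper's own write-up of the same projection.
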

\begin{proof}
    Let $\stdcochain_0$ be the projection of $\stdcochain$ into $\levelcochainset{\locviewer,0}{k}{\stdcomplex;\R} \cap \parens{\levelcochainset{\locviewer,1}{k}{\stdcomplex;\R}}^\bot$. 
    Due to Lemma~\ref{lem:links-of-cochains-from-high-dimensions} it holds that for every dimension $i$ that $\levelcochainset{\locviewer,i}{k}{\stdcomplex;\R} = \ker{\parens{\sldoperator^*_{i-1}\cdots \sldoperator^*_{k-1}}}$ and therefore $\stdcochain_0 \in \im{\parens{\sldoperator_{k-1} \cdots \sldoperator_0}} \cap \ker{\parens{\sldoperator^*_{-1} \cdots \sldoperator^*_{k-1}}}$.
    We can therefore use Lemma~\ref{lem:from-k-to-0} to find $\stdcochain^{=0}$ such that:
    \begin{enumerate}
        \item $\sldoperator^*_{-1}\stdcochain^{=0} = 0$
        \item $\norm{\stdcochain_0}^2 = \norm{\stdcochain^{=0}}^2$
        \item $\norm{\sldoperator^*_{0} \cdots \sldoperator^*_{k-1}\stdcochain_0}^2 = \norm{\sldoperator_{k-1} \cdots \sldoperator_0 \stdcochain^{=0}}^2$
    \end{enumerate}
    Due to Lemma~\ref{lem:nl-random-walk-and-i-up-operator} it holds that:
    \[
        \upoperator[k][0]
        = \frac{k}{k+1}\nlupoperator[0]+\frac{1}{k+1}I
    \]
    And therefore:
    \begin{align*}
        \norm{\sldoperator_{k-1} \cdots \sldoperator_{0}\stdcochain^{=0}}^2
        & = \innerprod{\sldoperator_{k-1} \cdots \sldoperator_{0}\stdcochain^{=0}, \sldoperator_{k-1} \cdots \sldoperator_{0}\stdcochain^{=0}}
        = \innerprod{\upoperator[k][0]\stdcochain^{=0}, \stdcochain^{=0}}=\\
        &= \innerprod{\parens{\frac{k}{k+1} \nlupoperator[0] - \frac{1}{k+1}I}\stdcochain^{=0}, \stdcochain^{=0}} =\\
        &=  \frac{k}{k+1}\innerprod{\nlupoperator[0]\stdcochain^{=0}, \stdcochain^{=0}} + \frac{1}{k+1}\innerprod{\stdcochain^{=0}, \stdcochain^{=0}} \le \\
        &\le \parens{\frac{k}{k+1}\gamma+\frac{1}{k+1}}\norm{\stdcochain^{=0}}^2 = \\
        &= \parens{\frac{k}{k+1}\gamma-\frac{k}{k+1}+1}\norm{\stdcochain^{=0}}^2=\\
        &= \parens{1-\frac{k}{k+1}\parens{1-\gamma}}\norm{\stdcochain^{=0}}^2
        = \parens{1-\frac{k}{k+1}\parens{1-\gamma}}\norm{\stdcochain_0}^2
    \end{align*}
    And thus:
    \[
        \norm{\sldoperator^*_0 \cdots \sldoperator^*_{k-1}\stdcochain}^2
        = \norm{\sldoperator_{k-1} \cdots \sldoperator_0\stdcochain_0}^2
        \le \parens{1-\frac{k}{k+1}\parens{1-\gamma}}\norm{\stdcochain_0}^2
        \le \parens{1-\frac{k}{k+1}\parens{1-\gamma}}\norm{\stdcochain}^2
    \]
\end{proof}

\subsection{Decomposing the Random Walk Operators}\label{subsec:decomposing-the-random-walk-operators}
Now that we have developed the tools we need, we can move on to strengthening the result of Alev and Lau~\cite{DBLP:journals/corr/abs-2001-02827} by showing a decomposition of the random walk operators.
We will do so by applying Theorem~\ref{thm:main} to the localization link viewer:
\begin{theorem}[Random walk decomposition]\label{thm:walk-operator-decomposition}
    Let $\stdcomplex$ be a $d$-dimensional pure simplicial complex.
    Also, assume that for every face $\stdface$ of dimension smaller than $d-2$ it holds that $\lambda_2\parens{\nlupoperator[\stdface, 0]} \le \lambda_{\stdface}$.
    Denote by $\gamma_{\genface, i} = \max_{\stdface \in \stdcomplex_{\genface}(i)}{\parens{\lambda_{\stdface}}}$.
    For every set of proper level cochains $\stdcochain_i \in \levelcochainset{\locviewer,\hat{i}}{k}{\stdcomplex;\R}$ it holds that:
    \[
        \innerprod{\nlupoperator[k]\sum_{i=0}^{k}{\stdcochain_i},\sum_{i=0}^{k}{\stdcochain_i}} \le \sum_{i=0}^{k}{\parens{1-\frac{1}{k-i+1}\prod_{j=i-1}^{k-1}{(1-\gamma_{j})}}\norm{\stdcochain_i}^2}
    \]
\end{theorem}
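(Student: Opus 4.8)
The plan is to apply the Bootstrapping Theorem (Theorem~\ref{thm:main}) to the localization link viewer $\locviewer$, which the two lemmas above establish is a link viewer respecting the non-lazy up-down random walk and which has $\dimdiff[\locviewer] = 1$, so that at every level of the recursion $r = k-1$. Everything then reduces to exhibiting auxiliary constants $\lambda_{\stdface,i,j}$ satisfying the two recursive inequalities of Theorem~\ref{thm:main}, for which $\lambda_{\emptyset,i,k}$ equals the coefficient in the statement.

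The choice I would make is: for a face $\stdface$ and the relative dimension $m = k - \abs{\stdface}$ that a localized cochain has in $\stdcomplex_{\stdface}$, set
\[
    \lambda_{\stdface,i,m} = 1 - \frac{1}{m-i+1}\prod_{\ell = i-1+\abs{\stdface}}^{m-1+\abs{\stdface}}(1-\gamma_{\ell}),
\]
with $\lambda_{\stdface,i,m} = 0$ in the degenerate range $m < i$ (which never contributes). For $\stdface = \emptyset$ this is exactly $1 - \frac{1}{k-i+1}\prod_{j=i-1}^{k-1}(1-\gamma_j)$. The point of indexing by the \emph{global} numbers $\gamma_{\ell}$ shifted by $\abs{\stdface}$ is that passing from $\stdface$ to $\stdface \cup \set{\stdvertex}$ raises $\abs{\stdface}$ by one and lowers $m$ by one, and these cancel, so $\lambda_{\stdface\cup\set{\stdvertex},\,i-1,\,m-1} = \lambda_{\stdface,i,m}$ for every vertex $\stdvertex$. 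Hence the second recursive inequality $\max_{\stdvertex}\lambda_{\stdface\cup\set{\stdvertex},\,i-1,\,r} \le \lambda_{\stdface,i,k}$ holds with equality, and, more importantly, in the first recursive inequality the weight $(1-\lambda_{\stdface\cup\set{\stdvertex},\,1,\,r})$ is the $\stdvertex$-independent constant $1 - \lambda_{\stdface,1,m} = \frac{1}{m}\prod_{\ell=\abs{\stdface}}^{k-1}(1-\gamma_{\ell})$, which pulls out of the expectation over $\stdvertex$.

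To verify the first recursive inequality I would use the advantage Lemma~\ref{lem:random-walk-advantage}. The first step is the observation that the operator $\downoperator[r][r]$ appearing there is the \emph{full} down-up operator $\sldoperator_{r-1}\cdots\sldoperator_{-1}\sldoperator^{*}_{-1}\cdots\sldoperator^{*}_{r-1}$, i.e.\ the orthogonal projection onto the constants, so that $\norm{\downoperator[r][r]\locviewer[\stdvertex][\gencochain_0]}^2 = \innerprod{\locviewer[\stdvertex][\gencochain_0], \ind{}}^2$; by Lemma~\ref{lem:multi-sld-operator-adj} applied inside $\stdcomplex_{\stdface}$ this equals $\parens{\sldoperator^{*}_0\cdots\sldoperator^{*}_{m-1}\gencochain_0(\stdvertex)}^2$, and averaging over $\stdvertex$ gives $\ev{\stdvertex \in \stdcomplex_{\stdface}(0)}{\norm{\downoperator[r][r]\locviewer[\stdvertex][\gencochain_0]}^2} = \norm{\sldoperator^{*}_0\cdots\sldoperator^{*}_{m-1}\gencochain_0}^2$. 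Lemma~\ref{lem:random-walk-advantage}, applied in $\stdcomplex_{\stdface}$ — whose $1$-skeleton is a $\lambda_{\stdface} \le \gamma_{\abs{\stdface}-1}$ expander — together with monotonicity of $x \mapsto 1 - \frac{m}{m+1}(1-x)$, bounds this by $\parens{1 - \frac{m}{m+1}(1-\gamma_{\abs{\stdface}-1})}\norm{\gencochain_0}^2$. Substituting this, together with $\lambda_{\stdface,1,m} = 1 - \frac{1}{m}\prod_{\ell=\abs{\stdface}}^{k-1}(1-\gamma_{\ell})$ and $\lambda_{\stdface,0,m} = 1 - \frac{1}{m+1}\prod_{\ell=\abs{\stdface}-1}^{k-1}(1-\gamma_{\ell})$, into the first recursive inequality, the common factor $\prod_{\ell=\abs{\stdface}}^{k-1}(1-\gamma_{\ell})$ drops out and the arithmetic telescopes to an exact identity; in particular the inequality holds. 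Feeding these $\lambda$'s into Theorem~\ref{thm:main} then yields the claimed decomposition.

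The main obstacle is getting the bookkeeping of the auxiliary constants right. The expectation in the first recursive inequality of Theorem~\ref{thm:main} carries the per-vertex weight $(1-\lambda_{\stdface\cup\set{\stdvertex},\,1,\,r})$; building $\lambda_{\stdface,i,j}$ out of the link-local spectral maxima $\gamma_{\stdface,\ell}$ makes this weight depend on $\stdvertex$ and yields no clean bound, whereas the global indexing with the $\abs{\stdface}$-shift makes every recursive step an identity. A secondary, routine point is recognizing that the $\downoperator[r][r]$ occurring in Theorem~\ref{thm:main} collapses all the way down to the empty face, which is exactly the shape in which Lemma~\ref{lem:random-walk-advantage} is stated.
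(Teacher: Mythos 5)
Your route is the paper's route: apply Theorem~\ref{thm:main} to the localization link viewer, take $\lambda_{\stdface,i,\cdot}$ to be exactly the coefficients in the statement (your globally indexed choice, shifted by $\abs{\stdface}$, is equivalent to the paper's link-local choice $\gamma_{\stdface,j}$ after the bound $\gamma_{\stdface,j}\le\gamma_{j+\abs{\stdface}}$, and it does make the second recursive condition an equality), identify $\downoperator[r][r]$ with the projection onto constants so that the expectation over $\stdvertex$ collapses to $\norm{\sldoperator^*_0\cdots\sldoperator^*_{m-1}\gencochain_0}^2$, and close the recursion with Lemma~\ref{lem:random-walk-advantage}. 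All of that matches the paper's proof.

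There is, however, an off-by-one slip in your verification of the first recursive condition. By your own shift identity $\lambda_{\stdface\cup\set{\stdvertex},i-1,m-1}=\lambda_{\stdface,i,m}$, the weight $(1-\lambda_{\stdface\cup\set{\stdvertex},1,r})$ equals $1-\lambda_{\stdface,2,m}=\frac{1}{m-1}\prod_{\ell=\abs{\stdface}+1}^{k-1}(1-\gamma_\ell)$, \emph{not} $1-\lambda_{\stdface,1,m}=\frac{1}{m}\prod_{\ell=\abs{\stdface}}^{k-1}(1-\gamma_\ell)$ as you assert. With the weight your identity actually produces, the arithmetic no longer telescopes: plugging in the advantage bound, at $\gamma_\ell\equiv 0$ the left-hand side is $1-\frac{1}{m}+\frac{1}{(m-1)(m+1)}$, which exceeds $\lambda_{\stdface,0,m}=1-\frac{1}{m+1}$ by $\frac{1}{m(m-1)(m+1)}$; so for complexes where Lemma~\ref{lem:random-walk-advantage} is essentially tight, the condition as you instantiated it is genuinely false, and your verification does not establish the hypotheses of Theorem~\ref{thm:main} as displayed. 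The repair is the one the paper makes implicitly: what the proof of Theorem~\ref{thm:main-technichal} actually consumes is the inequality in which the per-vertex coefficient $\lambda_{\stdvertex,0,r}$ has been replaced, in \emph{both} of its occurrences (legitimate since $\norm{\viewer[\stdvertex][\gencochain_0]}^2\ge\norm{\downoperator[r][r]\viewer[\stdvertex][\gencochain_0]}^2$, so the expression is monotone in that coefficient), by the single value $\lambda_{\stdface,1,m}$ coming from the second condition; this is exactly the form the paper verifies in~(\ref{eq:recursive-inequality}), with weight $1-\lambda_{\emptyset,1,k}$. With that weight your computation is identical to the paper's and does close as an exact identity; you just need to source the weight from the condition as it is actually used (level $1$ at the \emph{current} face and dimension), not from the displayed condition via your shift rule.
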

\begin{proof}
    We will prove this theorem by applying Theorem~\ref{thm:main} to the $k$-dimensional non-lazy random walk operator.
    We start by noting that the space of $k$-dimensional cochains that are orthogonal to the constants is comprised of exactly $k$ level functions as the space orthogonal to the constants is exactly $\ker\parens{\sldoperator^*_0 \cdots \sldoperator^*_k}$.

    We prove the rest of this theorem using a recursive argument.
    First note that for $k=0$ the claim holds trivially as:
    \[
        \innerprod{\nlupoperator[0]\stdcochain, \stdcochain} \le \lambda_{\stdface} \norm{\stdcochain}^2 = \gamma_{-1} \norm{\stdcochain}^2
    \]
    Assume that for every non-empty face $\stdface$ it holds that:
    \[
        \lambda_{\stdface, i, k} \le 1-\frac{1}{k-i+1}\prod_{j=i-1}^{k-1}{(1-\gamma_{\stdface,j})}
    \]
    Note that for every $i \ge 1$:
    \[
        \lambda_{\emptyset, i, k}
        = \max_{\stdvertex \in \stdcomplex(0)}{\set{\lambda_{\stdvertex, i-1, k-1}}}
        \le \max_{\stdvertex \in \stdcomplex(0)}{\set{1-\frac{1}{k-i+1}\prod_{j=i-2}^{k-2}{(1-\gamma_{\stdface,j})}}}
        \le 1-\frac{1}{k-i+1}\prod_{j=i-1}^{k-1}{(1-\gamma_{\emptyset,j})}
    \]
    Consider the left hand side of the recursive formula:
    \begin{equation}\label{eq:recursive-inequality}
        \lambda_{\emptyset, 1, k}\norm{\stdcochain_0}^2 + \ev{\stdvertex \in \stdcomplex(0)}{(1-\lambda_{\emptyset, 1, k})\norm{\downoperator[(k-1)][(k-1)]\locviewer[\stdvertex][\stdcochain_0]}^2} \le \lambda_{\emptyset, 0, k} \norm{\stdcochain_0}
    \end{equation}
    And note that:
    \begin{align*}
        &\lambda_{\emptyset, 1, k}\norm{\stdcochain_0}^2 + \ev{\stdvertex \in \stdcomplex(0)}{(1-\lambda_{\emptyset, 1, k})\norm{\downoperator[(k-1)][(k-1)]\locviewer[\stdvertex][\stdcochain_0]}^2}\\
        & \qquad \qquad = \lambda_{\emptyset, 1, k}\norm{\stdcochain_0}^2 + \ev{\stdvertex \in \stdcomplex(0)}{(1-\lambda_{\emptyset, 1, k})\norm{\sldoperator^*_{-1} \cdots \sldoperator^*_{k-2} \locviewer[\stdvertex][\stdcochain_0]}^2}\\
        & \qquad \qquad = \lambda_{\emptyset, 1, k}\norm{\stdcochain_0}^2 + \ev{\stdvertex \in \stdcomplex(0)}{(1-\lambda_{\emptyset, 1, k})\norm{\locviewer[\stdvertex][\sldoperator^*_0 \cdots \sldoperator^*_{k-1}\stdcochain_0]}^2}\\
        & \qquad \qquad = \lambda_{\emptyset, 1, k}\norm{\stdcochain_0}^2 + (1-\lambda_{\emptyset, 1, k})\norm{\sldoperator^*_0 \cdots \sldoperator^*_{k-1}\stdcochain_0}^2
    \end{align*}
    Consider, again, the left hand side of inequality~\ref{eq:recursive-inequality} and note that due to Lemma~\ref{lem:random-walk-advantage} it suffices to solve the following:
    \begin{align*}
        \lambda_{\emptyset, 1, k}\norm{\stdcochain^{=0}}^2 + (1-\lambda_{\emptyset, 1, k})\parens{1-\frac{k}{k+1}\parens{1-\gamma_{-1}}}\norm{\stdcochain^{=0}}^2 &\le \lambda_{\emptyset, 0, k} \norm{\stdcochain^{=0}}^2\\
        \lambda_{\emptyset, 1, k} + (1-\lambda_{\emptyset, 1, k})\parens{1-\frac{k}{k+1}\parens{1-\gamma_{-1}}} &\le \lambda_{\emptyset, 0, k}\\
        \frac{k}{k+1}\parens{1-\gamma_{-1}} \lambda_{\emptyset, 1, k} + \parens{1-\frac{k}{k+1}\parens{1-\gamma_{-1}}} &\le \lambda_{\emptyset, 0, k}
    \end{align*}
    Consider the following:
    \begin{multline*}
        \frac{k}{k+1}\parens{1-\gamma_{-1}} \lambda_{\emptyset, 1, k} + \parens{1-\frac{k}{k+1}\parens{1-\gamma_{-1}}} \le \\
        \le \frac{k}{k+1}\parens{1-\gamma_{-1}} \parens{1-\frac{1}{k}\prod_{j=0}^{k-1}{(1-\gamma_{\emptyset,j})}} + 1-\frac{k}{k+1}\parens{1-\gamma_{-1}} = \\
        = \frac{k}{k+1}\parens{1-\gamma_{-1}} -\frac{1}{k+1}\prod_{j=-1}^{k-1}{(1-\gamma_{\emptyset,j})} + 1-\frac{k}{k+1}\parens{1-\gamma_{-1}} = \\
        = 1 - \frac{1}{k+1}\prod_{j=-1}^{k-1}{(1-\gamma_{\emptyset,j})}
    \end{multline*}
    Thus if we set:
    \[
        \lambda_{\emptyset,0,k} = 1 - \frac{1}{k+1}\prod_{j=-1}^{k-1}{(1-\gamma_{\emptyset,j})}
    \]
    We get that for every face $\stdface$ and dimensions $i,k$:
    \[
        \lambda_{\stdface,i,k} \le 1 - \frac{1}{k-i+1}\prod_{j=i-1}^{k-1}{(1-\gamma_{\stdface,j})}
    \]
    Applying Theorem~\ref{thm:main} proves the decomposition.
\end{proof}
Note that the decomposition presented in Theorem~\ref{thm:walk-operator-decomposition} is the first decomposition theorem to offer a proper (i.e. not approximate) decomposition of the $k$-dimensional random walk whose components are orthogonal to each other.

\subsection{Examples of Cochains of a High Level}\label{subsec:examples-of-cochains-of-high-level}
In this section we will show two examples of cochains of dimension $k$ are $k$-level cochains.
Specifically we will show two examples of structures that have this property - the first is minimal cochains and the second is perfectly balanced cochains.

\subsubsection{Minimal Cochains}\label{subsubsec:minimal-cochains}
Minimal cochains appear naturally in a topological definition of expansion and were widely studied over $\mathbbm{F}_2$ (as most usages of topological expanders are of topological expanders over $\mathbbm{F}_2$).
We will present a definition of an analogue of these cochains over the real numbers and show that minimal $k$-cochains over $\R$ are of level $k$.

We begin by noting that the topological definition of expansion uses \emph{oriented} complexes and thus we have to modify our definition of a cochain to match an oriented complex:
\begin{definition}[Oriented complex]
    We define of an oriented simplicial complex $\stdcomplex$ as a simplicial complex that has an underlying orientation on the vertices.
    We also define the set $\stdcomplex_{ord}$ to be the set of all possible orientations of the faces of $\stdcomplex$.
\end{definition}
\begin{definition}[Cochain over oriented complex]
    A cochain $\stdcochain \in \cochainset{i}{\stdcomplex_{ord};\R}$ is a function $\stdcochain: \stdcomplex_{ord} \rightarrow \R$ such that for every permutation $\rho$ and every face $\stdface \in \stdcomplex_{ord}$ it holds that:
    \[
        \stdcochain(\stdface_{\rho(0)},\cdots,\stdface_{\rho(i)}) = \sign(\rho) \stdcochain(\stdface_{0},\cdots,\stdface_{i})
    \]
    For the rest of this section we will assume that the complex has some underlying orientation and that faces are given in that underlying distribution.
    For that reason, unless otherwise stated, we will ignore the orientation of the complex.
\end{definition}
Another key definition in topological notions of high dimensional expanders is the coboundary operator.
In the $0$th dimension over $\mathbbm{F}_2$ the coboundary operator can be thought of as marking all the edges that leave a set of vertices.
\begin{definition}[Coboundary operator]
    Define the coboundary operator $\coboundaryoperator_i: \cochainset{i}{\stdcomplex_{ord};\R} \rightarrow \cochainset{i+1}{\stdcomplex_{ord};\R}$ to be:
    \[
        \coboundaryoperator_i\stdcochain(\stdface_0,\cdots,\stdface_{i+1}) = \sum_{j=0}^{i+1}{(-1)^j \stdcochain(\stdface_0,\cdots,\stdface_{j-1}, \stdface_{j+1},\cdots, \stdface_{i+1})}
    \]
    Note that the dimension of the coboundary operator is always clear from context and thus we will often denote the coboundary operator simply by $\coboundaryoperator$.
\end{definition}
\begin{definition}[Coboundary]
    We define the set of coboundaries to be:
    \[
        \coboundaryset{i}{\stdcomplex;\R} = \image(\coboundaryoperator_{i-1})
    \]
\end{definition}
We are now ready to define a minimal cochain - a family of cochains that are all $k$-level cochains.
Note that these are one of the central objects of study when discussing topological definitions of high dimensioanl expanders (specifically, the expansion of minimal cochains under applications of the coboundary operator is the very definition of that notion of expansion\footnote{Specifically, this holds true for a notion called coboundary expansion. There are other notions of topological expansion such as cosystolic expansion in which minimality is taken with regards to a larger family of cochains called ``the cocycles''.}).
For a more in-depth discussion of the topological notions of expansion and their relation to minimal cochains see, for example \cite{DBLP:conf/focs/KaufmanKL14, DBLP:conf/stoc/EvraK16, DBLP:conf/isaac/KaufmanM21, DBLP:conf/approx/KaufmanM22, DBLP:conf/approx/KaufmanO22}.
\begin{definition}[Minimal cochain]
    A cochain $\stdcochain \in \cochainset{i}{\stdcomplex;\R}$ is minimal if:
    \[
        \stdcochain = \argmin_{\gencochain \in \coboundaryset{i}{\stdcomplex;\R}}{\norm{\stdcochain + \gencochain}}
    \]
\end{definition}
In order to show that that every minimal cochain is indeed a $k$-level cochain we will first consider a weaker notion of minimality called local-minimality.
\begin{definition}[Locally minimal cochain]
    A cochain $\stdcochain \in \cochainset{i}{\stdcomplex;\R}$ is $\locviewer$-$i$-locally minimal if for every $\stdface \in \stdcomplex(i)$ it holds that $\locviewer[\stdface][\stdcochain]$ is minimal.
\end{definition}
We will show that every $k$-cochain that is $(k-1)$-locally minimal is also a $k$-level cochain.
We will then follow this by showing that any minimal cochain is $(k-1)$-locally minimal and thus prove the claim.

\begin{lemma}\label{lem:constants-and-minimality}
    Let $\stdcomplex$ be a simplicial complex and let $\stdcochain \in \cochainset{i}{\stdcomplex; \R}$ be a cochain.
    It holds that $\norm{\stdcochain-a}^2 < \norm{\stdcochain}^2$ if one of the following conditions hold:
    \begin{itemize}
        \item $a \le 0$ and $a > 2 \ev{\stdface \in \stdcomplex(i)}{\stdcochain(\stdface)}$.
        \item $a \ge 0$ and $a < 2 \ev{\stdface \in \stdcomplex(i)}{\stdcochain(\stdface)}$
    \end{itemize}
\end{lemma}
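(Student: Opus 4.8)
The plan is a one-line computation: expand $\norm{\stdcochain - a}^2$ (reading $\stdcochain - a$ as $\stdcochain - a\ind{}$, where $\ind{}$ is the constant cochain on $\stdcomplex(i)$) and compare it with $\norm{\stdcochain}^2$. First I would record the two elementary facts about $\ind{}$ that drive the argument. Since the weights in a fixed dimension sum to $1$, we have $\norm{\ind{}}^2 = \sum_{\stdface \in \stdcomplex(i)}{\weight[\stdface]} = 1$; and by the definition of the inner product, $\innerprod{\stdcochain, \ind{}} = \sum_{\stdface \in \stdcomplex(i)}{\weight[\stdface]\stdcochain(\stdface)} = \ev{\stdface \in \stdcomplex(i)}{\stdcochain(\stdface)}$. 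I will abbreviate $\mu = \ev{\stdface \in \stdcomplex(i)}{\stdcochain(\stdface)}$.

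Next I would use bilinearity of $\innerprod{\cdot,\cdot}$ to obtain
\[
    \norm{\stdcochain - a}^2 = \norm{\stdcochain}^2 - 2a\innerprod{\stdcochain, \ind{}} + a^2\norm{\ind{}}^2 = \norm{\stdcochain}^2 + a(a - 2\mu),
\]
so that the desired inequality $\norm{\stdcochain - a}^2 < \norm{\stdcochain}^2$ is exactly the statement $a(a - 2\mu) < 0$, i.e.\ that $a$ and $a - 2\mu$ are nonzero and of opposite sign. The remaining step is trivial sign bookkeeping on the two listed cases: in the first case $a \le 0$ and $a > 2\mu$ force $a < 0 < a - 2\mu$, hence $a(a-2\mu) < 0$; in the second case $a \ge 0$ and $a < 2\mu$ symmetrically force $a - 2\mu < 0 < a$, again giving $a(a-2\mu) < 0$.

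There is essentially no obstacle here — once the quadratic identity above is in place, the conclusion is immediate. The only point deserving a moment's attention is the degenerate value $a = 0$, at which the displayed difference vanishes; this is handled by the strictness of the hypothesis in each bullet, so that the strict decrease of the norm is genuinely obtained in all the (nontrivial) cases to which the lemma is applied.
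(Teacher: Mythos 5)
Your proposal is correct and takes essentially the same route as the paper: expand the square to reduce $\norm{\stdcochain-a}^2 < \norm{\stdcochain}^2$ to $a^2 < 2a\,\ev{\stdface \in \stdcomplex(i)}{\stdcochain(\stdface)}$, i.e.\ to a sign condition on $a\bigl(a-2\ev{\stdface \in \stdcomplex(i)}{\stdcochain(\stdface)}\bigr)$, and finish with the two-case sign check. The only blemish is your claim that the bullets force $a<0$ (resp.\ $a>0$): $a=0$ is not excluded by the hypotheses when the mean is nonzero of the appropriate sign, but this is a defect of the lemma's statement itself rather than of your argument --- the paper's own proof divides by $a$ with the same implicit assumption, and in the lemma's application one has $a=\ev{\stdface \in \stdcomplex(i)}{\stdcochain(\stdface)} \ne 0$, exactly as you observe.
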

\begin{proof}
    Note that the following hold:
    \begin{multline*}
        \norm{\stdcochain - a}^2 < \norm{\stdcochain}^2
        \Leftrightarrow \sum_{\stdface \in \stdcomplex(i)}{\weight[\stdface]\parens{\stdcochain(\stdface)-a}^2} < \sum_{\stdface \in \stdcomplex(i)}{\weight[\stdface]\stdcochain(\stdface)^2} \Leftrightarrow \\
        \Leftrightarrow \sum_{\stdface \in \stdcomplex(i)}{\weight[\stdface]\stdcochain(\stdface)^2} -2a\sum_{\stdface \in \stdcomplex(i)}{\weight[\stdface]\stdcochain(\stdface)} + a^2 < \sum_{\stdface \in \stdcomplex(i)}{\weight[\stdface]\stdcochain(\stdface)^2} \Leftrightarrow \\
        \Leftrightarrow a^2 < 2a\sum_{\stdface \in \stdcomplex(i)}{\weight[\stdface]\stdcochain(\stdface)}
    \end{multline*}

    If $a \ge 0$ then:
    \[
        a^2 < 2a\sum_{\stdface \in \stdcomplex(i)}{\weight[\stdface]\stdcochain(\stdface)}
        \Leftrightarrow a < 2\sum_{\stdface \in \stdcomplex(i)}{\weight[\stdface]\stdcochain(\stdface)}
    \]

    In addition, if $a \le 0$ then:
    \[
        a^2 < 2a\sum_{\stdface \in \stdcomplex(i)}{\weight[\stdface]\stdcochain(\stdface)}
        \Leftrightarrow a > 2\sum_{\stdface \in \stdcomplex(i)}{\weight[\stdface]\stdcochain(\stdface)}
    \]
\end{proof}
\begin{corollary}
    Let $\stdcochain \in \cochainset{0}{\stdcomplex; \R}$ be a cochain.
    If $\stdcochain$ is minimal then $\ev{\stdface \in \stdcomplex(0)}{\stdcochain(\stdface)} = 0$.
\end{corollary}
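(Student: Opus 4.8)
The plan is to first pin down what the coboundary space $\coboundaryset{0}{\stdcomplex;\R}$ actually is, and then feed it into Lemma~\ref{lem:constants-and-minimality}. Recall that $\coboundaryset{0}{\stdcomplex;\R} = \image(\coboundaryoperator_{-1})$, where $\coboundaryoperator_{-1}$ takes a $(-1)$-cochain (which is just a single real number $c$, the value on the empty face) to the $0$-cochain $\coboundaryoperator_{-1}c(\stdvertex) = c$. Hence $\coboundaryset{0}{\stdcomplex;\R}$ is exactly the space of constant functions on $\stdcomplex(0)$. So unwinding the definition of a minimal cochain, $\stdcochain$ minimal means $\norm{\stdcochain} \le \norm{\stdcochain + c}$ for every constant $c$, equivalently (replacing $c$ by $-a$, which is legitimate since constants form a subspace) $\norm{\stdcochain} \le \norm{\stdcochain - a}$ for every $a \in \R$.

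Next I would set $m = \ev{\stdface \in \stdcomplex(0)}{\stdcochain(\stdface)}$ and argue by contradiction, assuming $m \ne 0$. The idea is to apply Lemma~\ref{lem:constants-and-minimality} with $a = m$. If $m > 0$, then $a = m \ge 0$ and $a = m < 2m = 2\ev{\stdface \in \stdcomplex(0)}{\stdcochain(\stdface)}$, so the second bullet of the Lemma applies and gives $\norm{\stdcochain - a}^2 < \norm{\stdcochain}^2$. If $m < 0$, then $a = m \le 0$ and $a = m > 2m = 2\ev{\stdface \in \stdcomplex(0)}{\stdcochain(\stdface)}$, so the first bullet applies and again gives $\norm{\stdcochain - a}^2 < \norm{\stdcochain}^2$. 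In either case this contradicts the minimality of $\stdcochain$ established in the previous paragraph, so we must have $m = 0$, which is the claim.

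I do not expect any real obstacle here: the content is entirely in Lemma~\ref{lem:constants-and-minimality}, and the only thing to be careful about is the identification of $\coboundaryset{0}{\stdcomplex;\R}$ with the constants (so that "minimal" genuinely means "norm-minimal among translates by constants"), together with the bookkeeping of the two sign cases for $m$. If one wanted to avoid the case split, one could instead just observe that $t \mapsto \norm{\stdcochain - t}^2 = \norm{\stdcochain}^2 - 2tm + t^2$ is a quadratic in $t$ minimized at $t = m$, so minimality at $t=0$ forces $m = 0$; but since Lemma~\ref{lem:constants-and-minimality} is already available in the paper, routing through it is the cleaner write-up.
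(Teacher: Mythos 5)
Your proposal is correct and follows essentially the same route as the paper: identify $\coboundaryset{0}{\stdcomplex;\R}$ with the constant functions and apply Lemma~\ref{lem:constants-and-minimality} with $a = \ev{\stdface \in \stdcomplex(0)}{\stdcochain(\stdface)}$ to contradict minimality. Your write-up is merely more explicit about the two sign cases (and the quadratic-in-$t$ shortcut you mention), which the paper leaves implicit.
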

\begin{proof}
    Note that the $0$-dimensioanl coboundaries are the constant functions and thus if $\ev{\stdface \in \stdcomplex(0)}{\stdcochain(\stdface)} \ne 0$ then note that $a = \ev{\stdface \in \stdcomplex(0)}{\stdcochain(\stdface)}$ satisfies the conditions of Lemma~\ref{lem:constants-and-minimality} and thus contradict the minimality of $\stdcochain$.
\end{proof}
\begin{corollary}\label{cor:locally-minimal-cochains-are-k-level}
    $(k-1)$-locally minimal cochains are $k$-level cochains.
\end{corollary}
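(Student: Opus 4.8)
The plan is to unwind the two definitions and reduce everything to the $0$-dimensional statement already established in the preceding Corollary (which in turn rests on Lemma~\ref{lem:constants-and-minimality}).

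First I would fix a $(k-1)$-locally minimal cochain $\stdcochain \in \cochainset{k}{\stdcomplex;\R}$ together with an arbitrary face $\stdface \in \stdcomplex(k-1)$, and observe that the localization $\locviewer[\stdface][\stdcochain] = \stdcochain_{\stdface}$ is a $0$-dimensional cochain on the link $\stdcomplex_{\stdface}$: indeed $\dim\parens{\stdcochain} = k$ and $\dim\parens{\stdface} = k-1$, so $\stdcochain_{\stdface} \in \cochainset{0}{\stdcomplex_{\stdface};\R}$. By the definition of $\locviewer$-local minimality this $0$-cochain is minimal in $\stdcomplex_{\stdface}$. Since we are in dimension $0$ the orientation of the oriented complex plays no role here, so there is no mismatch with the oriented setup in which minimal cochains are defined.

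Next I would invoke the $0$-dimensional result inside the link. The link $\stdcomplex_{\stdface}$ is itself a simplicial complex, equipped with the induced weight function $\weightletter_{\stdface}$, and the $0$-dimensional coboundaries of any complex are exactly its constant functions; hence the preceding Corollary applies verbatim to $\stdcochain_{\stdface}$ and yields $\ev{\genvertex \in \stdcomplex_{\stdface}(0)}{\stdcochain_{\stdface}(\genvertex)} = 0$. Spelling out the inner product in $\stdcomplex_{\stdface}$, this says exactly $\innerprod{\locviewer[\stdface][\stdcochain], \ind{}} = \ev{\genvertex \in \stdcomplex_{\stdface}(0)}{\stdcochain_{\stdface}(\genvertex)} = 0$.

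Finally, since $\stdface \in \stdcomplex(k-1)$ was arbitrary, the vanishing $\innerprod{\locviewer[\stdface][\stdcochain], \ind{}} = 0$ holds for every $(k-1)$-face, which is precisely the condition of Definition~\ref{def:i-level-cochain} for $\stdcochain$ to be a $k$-level cochain with respect to $\locviewer$. The entire argument is essentially bookkeeping: the only point that needs care is the dimension count in the localization step, and there is no real obstacle beyond keeping the indices straight, since all the analytic substance is already packaged in the $0$-dimensional Corollary.
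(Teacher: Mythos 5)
Your proposal is correct and follows essentially the same route as the paper: localize to an arbitrary $(k-1)$-face, use local minimality together with the fact that $0$-coboundaries over $\R$ are the constant functions, and apply the $0$-dimensional consequence of Lemma~\ref{lem:constants-and-minimality} to get vanishing expectation, which is exactly the $k$-level condition of Definition~\ref{def:i-level-cochain}. No gaps; your extra remarks on the dimension count and on orientation being irrelevant in dimension $0$ are just explicit versions of steps the paper leaves implicit.
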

\begin{proof}
    Let $\stdcochain$ be a $(k-1)$-locally minimal cochain and let $\stdface \in \stdcomplex(k-1)$.
    Due to the local minimality of $\stdcochain$ it holds that $\locviewer[\stdface][\stdcochain]$ is minimal in $\stdcomplex_{\stdface}$ i.e. for every $\gencochain \in \coboundaryset{0}{\stdcomplex_{\stdface};\R}$ it holds that $\norm{\stdcochain} \le \norm{\stdcochain + \gencochain}$.
    Note that the $0$-coboundaries of $\stdcomplex_{\stdface}$ over $\R$ are the constant functions and therefore, due to Lemma~\ref{lem:constants-and-minimality}, it holds that $\ev{\stdface \in \stdcomplex(i)}{\stdcochain(\stdface)} = 0$ (as if it is not one can pick a coboundary that minimizes the norm of $\locviewer[\stdface][\stdcochain]$).
\end{proof}
\begin{lemma}\label{lem:minimal-cochains-are-locally-minimal}
    Minimal cochains are $(k-1)$-locally minimal.
\end{lemma}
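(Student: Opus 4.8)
The plan is to argue by contradiction. Suppose $\stdcochain \in \cochainset{k}{\stdcomplex_{ord};\R}$ is minimal but \emph{not} $(k-1)$-locally minimal, and produce a $k$-coboundary whose addition to $\stdcochain$ strictly decreases its norm. Recall that for a $(k-1)$-face $\stdface$ the localization $\locviewer[\stdface][\stdcochain] = \stdcochain_{\stdface}$ is a $0$-cochain on the link $\stdcomplex_{\stdface}$, whose $0$-coboundaries are exactly the constant functions; hence by Lemma~\ref{lem:constants-and-minimality} (as in the corollary immediately following it) $\stdcochain_{\stdface}$ is minimal in $\stdcomplex_{\stdface}$ precisely when $\ev{\genvertex \in \stdcomplex_{\stdface}(0)}{\stdcochain_{\stdface}(\genvertex)} = 0$. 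So the failure of local minimality hands us a face $\stdface \in \stdcomplex(k-1)$ with $m := \ev{\genvertex \in \stdcomplex_{\stdface}(0)}{\stdcochain_{\stdface}(\genvertex)} \ne 0$, and it suffices to derive a contradiction from this single face.

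Fix such an $\stdface$ together with one orientation of it, and let $\psi_{\stdface} \in \cochainset{k-1}{\stdcomplex_{ord};\R}$ be the signed indicator of $\stdface$: value $1$ on $\stdface$ in that orientation, $\sign(\rho)$ on each reorientation $\rho$ of $\stdface$, and $0$ on every other $(k-1)$-face (this is a legitimate cochain since it respects the antisymmetry constraint). A short computation with the coboundary operator shows that $\coboundaryoperator_{k-1}\psi_{\stdface}$ is supported exactly on the $k$-faces $\stdface \cup \set{\genvertex}$ with $\genvertex \in \stdcomplex_{\stdface}(0)$, and that, evaluated on $\stdface \cup \set{\genvertex}$ in the orientation $(\stdface,\genvertex)$ (the vertices of $\stdface$ in their fixed order followed by $\genvertex$), it equals $(-1)^k$ — the \emph{same} scalar for every $\genvertex$. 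In particular $\coboundaryoperator_{k-1}\psi_{\stdface} \in \coboundaryset{k}{\stdcomplex;\R}$, so $s\,\coboundaryoperator_{k-1}\psi_{\stdface}$ is an admissible perturbation of $\stdcochain$ for every $s \in \R$.

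The heart of the argument is the passage from the global norm to the norm in the link. Since $\stdcochain + s\,\coboundaryoperator_{k-1}\psi_{\stdface}$ agrees with $\stdcochain$ on every $k$-face not containing $\stdface$ and equals $\stdcochain_{\stdface}(\genvertex) + s(-1)^k$ on $\stdface \cup \set{\genvertex}$, and since $\weight[\stdface \cup \set{\genvertex}] = (k+1)\,\weight[\stdface]\,\weight[\stdface][\genvertex]$ by the link-weight formula (with $i = k-1$, $j = 0$), one gets
\[
    \norm{\stdcochain + s\,\coboundaryoperator_{k-1}\psi_{\stdface}}^2 - \norm{\stdcochain}^2
    = (k+1)\,\weight[\stdface]\parens{\norm{\stdcochain_{\stdface} + s(-1)^k \ind{}}^2 - \norm{\stdcochain_{\stdface}}^2},
\]
where the norms on the right are taken inside $\stdcomplex_{\stdface}$. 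Now apply Lemma~\ref{lem:constants-and-minimality} in $\stdcomplex_{\stdface}$ to $\stdcochain_{\stdface}$ with the scalar $a = m$: because $m \ne 0$, this $a$ satisfies the lemma's hypotheses whichever sign $m$ has (it lies on the correct side of $2m$), so $\norm{\stdcochain_{\stdface} - m\ind{}}^2 < \norm{\stdcochain_{\stdface}}^2$; choosing $s = (-1)^{k+1}m$ makes the displayed difference negative. Hence $\norm{\stdcochain + s\,\coboundaryoperator_{k-1}\psi_{\stdface}} < \norm{\stdcochain}$ with $s\,\coboundaryoperator_{k-1}\psi_{\stdface} \in \coboundaryset{k}{\stdcomplex;\R}$, contradicting the minimality of $\stdcochain$. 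Therefore every localization $\stdcochain_{\stdface}$ to a $(k-1)$-face has vanishing mean, i.e.\ is minimal, which is exactly the assertion.

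I expect the only genuinely delicate point to be the orientation bookkeeping in the second step — checking that $\coboundaryoperator_{k-1}\psi_{\stdface}$ is the \emph{same} constant $(-1)^k$ on all the $k$-faces $\stdface \cup \set{\genvertex}$, so that the perturbation behaves like a constant function in the link — together with tracking the normalization factor $(k+1)\weight[\stdface]$ that relates the global and local norms. Everything after that is a direct application of Lemma~\ref{lem:constants-and-minimality}.
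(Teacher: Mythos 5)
Your proposal is correct and follows essentially the same route as the paper: assume non-minimality of some localization $\stdcochain_{\stdface}$, perturb $\stdcochain$ by the coboundary of a cochain supported on $\stdface$, split the squared norm over $k$-faces containing and not containing $\stdface$, convert the contribution of the former to the link norm via the weight relation, and invoke Lemma~\ref{lem:constants-and-minimality} to contradict minimality. The only difference is cosmetic: you take the specific constant $a=m$ (the mean) and track the orientation sign $(-1)^k$ explicitly, whereas the paper uses an arbitrary constant witnessing non-minimality and suppresses the sign bookkeeping.
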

\begin{proof}
    Let $\stdcochain \in \cochainset{k}{\stdcomplex;\R}$ be a minimal cochain.
    Suppose that it is not $(k-1)$-locally minimal, therefore there exists a face $\stdface \in \stdcomplex(k-1)$ such that $\locviewer[\stdface][\stdcochain]$ is not minimal.
    Note that $\locviewer[\stdface][\stdcochain] \in \cochainset{0}{\stdcomplex_{\stdface};\R}$ and therefore, due to the non-minimality of $\locviewer[\stdface][\stdcochain]$ in $\stdcomplex_{\stdface}$, there exists $a \in \R$ such that $\norm{\stdcochain-a}^2_{\stdface} < \norm{\stdcochain}^2_{\stdface}$ (as $0$-coboundaries over $\R$ are the constant functions).
    Consider the following $(k-1)$-cochain:
    \[
        \gencochain(\genface) = \begin{cases}
            a & \genface = \stdface \\
            0 & \text{Otherwise}
        \end{cases}
    \]
    The following holds:
    \begin{align*}
        \norm{\stdcochain - \coboundaryoperator\gencochain}^2
        &= \sum_{\genface \in \stdcomplex(k)}{\weight[\genface]\parens{\stdcochain(\genface) - \coboundaryoperator\gencochain(\genface)}^2} = \\
        &= \sum_{\substack{\genface \in \stdcomplex(k) \\ \stdface \nsubseteq \genface}}{\weight[\genface]\parens{\stdcochain(\genface) - \coboundaryoperator\gencochain(\genface)}^2} + \sum_{\substack{\genface \in \stdcomplex(k) \\ \stdface \subseteq \genface}}{\weight[\genface]\parens{\stdcochain(\genface) - \coboundaryoperator\gencochain(\genface)}^2} = \\
        &= \sum_{\substack{\genface \in \stdcomplex(k) \\ \stdface \nsubseteq \genface}}{\weight[\genface]\parens{\stdcochain(\genface)-0}^2} + \sum_{\stdvertex \in \stdcomplex_{\stdface}(0)}{\weight[\genface]\parens{\stdcochain(\stdvertex\stdface) - \coboundaryoperator\gencochain(\stdvertex\stdface)}^2} = \\
        &= \sum_{\substack{\genface \in \stdcomplex(k) \\ \stdface \nsubseteq \genface}}{\weight[\genface]\stdcochain(\genface)}^2 + \sum_{\stdvertex \in \stdcomplex_{\stdface}(0)}{\weight[\stdvertex\stdface]\parens{\stdcochain(\stdvertex\stdface) - a}^2} = \\
        &= \sum_{\substack{\genface \in \stdcomplex(k) \\ \stdface \nsubseteq \genface}}{\weight[\genface]\stdcochain(\genface)^2} + \binom{k+1}{k}\weight[\stdface]\sum_{\stdvertex \in \stdcomplex_{\stdface}(0)}{\weight[\stdface][\stdvertex]\parens{\stdcochain(\stdvertex\stdface) - a}^2} = \\
        &= \sum_{\substack{\genface \in \stdcomplex(k) \\ \stdface \nsubseteq \genface}}{\weight[\genface]\stdcochain(\genface)^2} + \binom{k+1}{k}\weight[\stdface]\norm{\stdcochain-a}^2_{\stdface} < \\
        &< \sum_{\substack{\genface \in \stdcomplex(k) \\ \stdface \nsubseteq \genface}}{\weight[\genface]\stdcochain(\genface)^2} + \binom{k+1}{k}\weight[\stdface]\norm{\stdcochain}^2_{\stdface} = \\
        &= \sum_{\substack{\genface \in \stdcomplex(k) \\ \stdface \nsubseteq \genface}}{\weight[\genface]\stdcochain(\genface)^2} + \binom{k+1}{k}\weight[\stdface]\sum_{\stdvertex \in \stdcomplex_{\stdface}(0)}{\weight[\stdface][\stdvertex]\parens{\stdcochain(\stdvertex\stdface)}^2} = \\
        &= \sum_{\substack{\genface \in \stdcomplex(k) \\ \stdface \nsubseteq \genface}}{\weight[\genface]\stdcochain(\genface)^2} + \sum_{\substack{\genface \in \stdcomplex(k) \\ \stdface \subseteq \genface}}{\weight[\genface]\stdcochain(\genface)^2}
        = \sum_{\genface \in \stdcomplex(k)}{\weight[\genface]\stdcochain(\genface)^2}
        = \norm{\stdcochain}^2
    \end{align*}
    Which contradicts the minimality of $\stdcochain$.
\end{proof}
\begin{corollary}
    Minimal $k$-cochains are $k$-levels cochains.
\end{corollary}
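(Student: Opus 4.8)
The plan is to derive this immediately by composing the two preceding results. First I would invoke Lemma~\ref{lem:minimal-cochains-are-locally-minimal}: if $\stdcochain \in \cochainset{k}{\stdcomplex;\R}$ is minimal, then it is $(k-1)$-locally minimal with respect to the localization link viewer $\locviewer$, i.e.\ $\locviewer[\stdface][\stdcochain]$ is minimal in $\stdcomplex_{\stdface}$ for every $\stdface \in \stdcomplex(k-1)$. Then I would feed this into Corollary~\ref{cor:locally-minimal-cochains-are-k-level}, which states that every $(k-1)$-locally minimal cochain is a $k$-level cochain. Chaining the two gives that $\stdcochain$ is a $k$-level cochain, which is exactly the assertion.

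Since all the substance lives in the two lemmas, there is no genuine obstacle left at this point; the corollary is pure transitivity. It is nonetheless worth recalling where the work went, as that is where any difficulty would lie. Corollary~\ref{cor:locally-minimal-cochains-are-k-level} reduces to the fact that the $0$-dimensional coboundaries over $\R$ are precisely the constant functions, so minimality of $\locviewer[\stdface][\stdcochain]$ forces $\ev{\genvertex \in \stdcomplex_{\stdface}(0)}{\locviewer[\stdface][\stdcochain](\genvertex)} = 0$ --- otherwise subtracting the mean strictly reduces the norm by Lemma~\ref{lem:constants-and-minimality} --- which is exactly the defining condition for $\stdcochain$ being $k$-level in Definition~\ref{def:i-level-cochain}. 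And Lemma~\ref{lem:minimal-cochains-are-locally-minimal} shows that a failure of local minimality at some $\stdface \in \stdcomplex(k-1)$ could be lifted: put the improving constant $a$ on $\stdface$ and zero elsewhere to get a $(k-1)$-cochain $\gencochain$; then $\coboundaryoperator\gencochain$ modifies $\stdcochain$ only on the $k$-faces containing $\stdface$, so the norm computation splits into a part untouched by $\coboundaryoperator\gencochain$ and the link part where it strictly decreases, contradicting global minimality.

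With both statements available, the corollary follows by transitivity and needs no further calculation; the proof is simply ``combine Lemma~\ref{lem:minimal-cochains-are-locally-minimal} and Corollary~\ref{cor:locally-minimal-cochains-are-k-level}.''
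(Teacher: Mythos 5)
Your proposal is correct and follows exactly the paper's own argument: the corollary is obtained by chaining Lemma~\ref{lem:minimal-cochains-are-locally-minimal} (minimal implies $(k-1)$-locally minimal) with Corollary~\ref{cor:locally-minimal-cochains-are-k-level} (locally minimal implies $k$-level). Your recap of where the substance lies in those two results is accurate but not needed for the corollary itself.
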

\begin{proof}
    This follows directly from Corollary~\ref{cor:locally-minimal-cochains-are-k-level} and Lemma~\ref{lem:minimal-cochains-are-locally-minimal}.
\end{proof}

\subsubsection{Perfectly Balanced Cochains}\label{subsubsec:perfectly-balanced-cochains}
We will now show a different family of cochains that are $k$-level cochains.
Specifically we define the notion of perfectly balanced cochains.
This will be a purely combinatorial definition and we will show that cochains that satisfy this combinatorial definition are $k$-level cochains (up to an additive constant).
\begin{definition}[Perfectly balanced set of faces]
    A set of faces $S \subseteq \stdcomplex(k)$ is perfectly balanced over links of dimension $i$ if:
    \[
        \forall \stdface \in \stdcomplex(i): \sum_{\genface \in S}{\weight[\genface]} = \sum_{\substack{\genface \in S \\ \stdface \subseteq \genface}}{\weight[\stdface][\genface \setminus \stdface]}
    \]
\end{definition}
\begin{definition}[Perfectly balanced cochain]
    A cochain $\stdcochain \in \cochainset{k}{\stdcomplex;\R}$ is perfectly balanced over faces of dimension $i$ if there exists a perfectly balanced set of faces $S \subseteq \stdcomplex(k)$ that is perfectly balanced over faces of dimension $i$ such that:
    \[
        \stdcochain(\stdface) = \begin{cases}
            1 & \stdface \in S \\
            0 & \stdface \notin S
        \end{cases}
    \]
\end{definition}
\begin{lemma}
    Let $\stdcochain \in \cochainset{k}{\stdcomplex;\R}$ be perfectly balanced over faces of dimension $i$ then $\stdcochain - \ev{}{\stdcochain}$ is an $i$-level cochain.
\end{lemma}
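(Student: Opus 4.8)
The plan is to verify the definition of an $i$-level cochain (Definition~\ref{def:i-level-cochain}) for the localization link viewer $\locviewer$ by a direct unfolding. Write $\stdcochain = \ind{S}$ for the perfectly balanced set $S \subseteq \stdcomplex(k)$ and put $a = \ev{}{\stdcochain}$, so $a = \sum_{\genface \in S}\weight[\genface]$ is the global $\weightletter$-density of $S$. First I would observe that, since each $\locviewer[\stdface]$ is linear and fixes constants ($\locviewer[\stdface]\ind{} = \ind{}$), we have $\locviewer[\stdface][\stdcochain - a] = \stdcochain_{\stdface} - a\cdot\ind{}$; and since $\innerprod{\ind{}, \ind{}}_{\stdface} = 1$, the condition $\innerprod{\locviewer[\stdface][\stdcochain - a], \ind{}}_{\stdface} = 0$ is equivalent to $\innerprod{\stdcochain_{\stdface}, \ind{}}_{\stdface} = a$.

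Next I would compute the left-hand side combinatorially: for $\stdface \in \stdcomplex(i)$, unfolding the localization and the link inner product gives $\innerprod{\stdcochain_{\stdface}, \ind{}}_{\stdface} = \sum_{\genface' \in \stdcomplex_{\stdface}}\weight[\stdface][\genface']\stdcochain(\stdface \cup \genface') = \sum_{\genface \in S,\ \stdface \subseteq \genface}\weight[\stdface][\genface \setminus \stdface]$, i.e.\ the local $\weightletter_{\stdface}$-density of $S$ in the link of $\stdface$. By the definition of a perfectly balanced set over faces of dimension $i$, this local density equals the global density $\sum_{\genface \in S}\weight[\genface] = a$ for every $\stdface \in \stdcomplex(i)$. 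Hence $\innerprod{\locviewer[\stdface][\stdcochain - a], \ind{}}_{\stdface} = 0$ for every $\stdface \in \stdcomplex(i)$, which is precisely the assertion that $\stdcochain - \ev{}{\stdcochain}$ is an $(i+1)$-level cochain with respect to $\locviewer$. Finally, applying Lemma~\ref{lem:containment-of-i-level-cochains} with $j = i \le i+1$ gives $\levelcochainset{\locviewer, i+1}{k}{\stdcomplex;\R} \subseteq \levelcochainset{\locviewer, i}{k}{\stdcomplex;\R}$, so $\stdcochain - \ev{}{\stdcochain}$ is in particular an $i$-level cochain, as claimed.

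There is no deep step here; the only thing to be careful about is the index bookkeeping, since the perfectly balanced hypothesis is phrased over dimension-$i$ faces while the $i$-level condition tests dimension-$(i-1)$ faces. Routing through the stronger $(i+1)$-level statement and then downgrading via the containment lemma, as above, handles this cleanly and is the route I would take. If one instead prefers to check the $i$-level condition directly on dimension-$(i-1)$ faces, the extra ingredient needed is that the perfectly balanced condition "trickles down" one dimension, which follows from the link weight normalization $\weight[\stdface][\genface] = \weight[\genface \cup \stdface]/\left(\binom{\abs{\genface \cup \stdface}}{\abs{\stdface}}\weight[\stdface]\right)$ together with the identity $\sum_{\stdface \in \stdcomplex(i),\ \genface \subseteq \stdface}\weight[\stdface] = (i+1)\weight[\genface]$ for $\genface \in \stdcomplex(i-1)$; but this is strictly more work than the containment argument.
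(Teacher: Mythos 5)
Your proof is correct and takes essentially the same route as the paper's: unfold the localization of $\stdcochain - \ev{}{\stdcochain}$ at an $i$-dimensional face and use the perfectly balanced hypothesis to equate the local density of $S$ in the link with its global density $\ev{}{\stdcochain}$. The only difference is cosmetic: the paper performs exactly your computation on dimension-$i$ faces without remarking on the index shift in Definition~\ref{def:i-level-cochain}, whereas you make the downgrade from the $(i+1)$-level to the $i$-level statement explicit via Lemma~\ref{lem:containment-of-i-level-cochains}.
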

\begin{proof}
    The Lemma follows directly from the definition:
    \begin{multline*}
        \forall \stdface \in \stdcomplex(i): \ev{\genface \in \stdcomplex_{\stdface}(k-i-1)}{\locviewer[\stdface][\parens{\stdcochain - \ev{}{\stdcochain}}](\genface)} = \\
        = \ev{\genface \in \stdcomplex_{\stdface}(k-i-1)}{\locviewer[\stdface][\stdcochain](\genface) - \ev{}{\stdcochain}}
        = \ev{\genface \in \stdcomplex(k)}{\stdcochain(\genface)} - \ev{}{\stdcochain}
        = 0
    \end{multline*}
\end{proof}

    \bibliographystyle{alpha}
    \bibliography{main}

\end{document}